\newtheorem{thm}{Theorem}[section]
\newtheorem{corollary}[thm]{Corollary}
\newtheorem{lemma}[thm]{Lemma}
\newtheorem{prop}[thm]{Proposition}
\newtheorem{conjecture}[thm]{Conjecture}
\newtheorem{thm-dfn}[thm]{Theorem-Definition}
\theoremstyle{definition}
\newtheorem{definition}[thm]{Definition}
\numberwithin{equation}{section}
\theoremstyle{remark}
\newtheorem{remark}{Remark}[section]
\newtheorem{example}[remark]{Example}
\newcommand{\fg}{{\mathfrak g}}
\newcommand{\ft}{{\mathfrak t}}
\newcommand{\fb}{{\mathfrak b}}
\newcommand{\fp}{{\mathfrak p}}
\newcommand{\fa}{{\mathfrak a}}
\newcommand{\fc}{{\mathfrak c}}
\newcommand{\fk}{{\mathfrak k}}
\newcommand{\fh}{{\mathfrak h}}
\newcommand{\fq}{{\mathfrak q}}
\newcommand{\fm}{{\mathfrak m}}
\newcommand{\rW}{{\mathrm W}}
\newcommand{\bC}{{\mathbb C}}
\newcommand{\bG}{{\mathbb G}}
\newcommand{\mE}{\mathcal{E}}
\newcommand{\mF}{\mathcal{F}}
\newcommand{\on}{\operatorname}
\newcommand{\D}{\on{D}}
\newcommand{\ra}{\rightarrow}
\newcommand{\is}{\simeq}
\newcommand{\Loc}{\on{Loc}}
\newcommand{\Bun}{\on{Bun}}
\newcommand{\quash}[1]{}  
\newcommand{\nc}{\newcommand}
\newcommand{\bbC}{{\mathbb C}}
\newcommand{\bbP}{{\mathbb P}}
\newcommand{\bbR}{{\mathbb R}}
\newcommand{\calA}{{\mathcal A}}
\newcommand{\calC}{{\mathcal C}}
\newcommand{\calE}{{\mathcal E}}
\newcommand{\calF}{{\mathcal F}}
\newcommand{\calG}{{\mathcal G}}
\newcommand{\calH}{{\mathcal H}}
\newcommand{\calM}{{\mathcal M}}
\newcommand{\calN}{{\mathcal N}}
\newcommand{\calO}{{\mathcal O}}
\newcommand{\Sym}{\on{Sym}}
\nc{\al}{{\alpha}} \nc{\be}{{\beta}} \nc{\ga}{{\gamma}}
\nc{\ve}{{\varepsilon}} \nc{\Ga}{{\Gamma}} 
\nc{\La}{{\Lambda}}
\nc{\ho}{\text{hol}}
\nc{\ad }{{\on{ad }}}
\nc{\aff}{{\on{aff}}} \nc{\Aff}{{\mathbf{Aff}}}
\nc{\der}{{\on{der}}}
\nc{\diag}{{\on{diag}}}
\nc{\Fl}{{\calF\ell}}
\newcommand{\Gr}{{\on{Gr}}}
\nc{\Hg}{{\on{Higgs}}}
\nc{\Id}{{\on{Id}}}
\nc{\Ind}{{\on{Ind}}}
\newcommand{\Lie}{{\on{Lie}}}
\nc{\Op}{{\on{Op}}}
\nc{\res}{{\on{res}}}
\nc{\tr}{{\on{tr}}}
\nc{\GSp}{{\on{GSp}}} \nc{\GU}{{\on{GU}}} \nc{\SL}{{\on{SL}}}
\nc{\SU}{{\on{SU}}} \nc{\SO}{{\on{SO}}}
\nc{\nh}{{\Loc_{J^p}(\tau')}}
\nc{\bnh}{{\Loc_{\breve J^p}(\tau')}}
\nc{\bU}{{\overline{U}}} \nc{\IC}{{\on{IC}}}
\newcommand{\Nm}{{\on{Nm}}}
\newcommand{\ct}{\check\theta}
\newcommand{\beqn}{\begin{equation*}}
\newcommand{\eeqn}{\end{equation*}}
\newcommand{\beq}{\begin{equation}}
\newcommand{\eeq}{\end{equation}}
\tikzset{node distance=2em, ch/.style={circle,draw,on chain,inner sep=2pt},chj/.style={ch,join},every path/.style={shorten >=4pt,shorten <=4pt},line width=1pt,baseline=-1ex}
\begin{document}
\title{
A relative Langlands dual realization of $T^*(G/K)$
and derived Satake
}

         \author{Tsao-Hsien Chen}
        \address{School of Mathematics, University of Minnesota, Twin cities, Minneapolis, MN 55455 }
         \email{chenth@umn.edu}
\thanks{}
\thanks{}

\maketitle   
\begin{abstract}
We show that the cotangent bundle $T^*(G/K)$ of
a quasi-split symmetric space $G/K$
is isomorphic to  the  dual variety of the loop
symmetric space for 
 the Langlands dual group, providing instances of the relative Langlands duality for non-split groups.
Then we establish a Langlands dual  description of 
equivariant coherent sheaves on $T^*(G/K)$ in terms of constructible sheaves on 
the loop symmetric spaces,
generalizing the 
derived Satake equivalence 
for reductive groups to quasi-split symmetric spaces.
To this end, we prove the 
derived Satake equivalence for the twisted affine Grassmannians, study ring objects arising from  loop symmetric spaces, 
and explore the formality and fully-faithfulness properties of $!$-pure objects. 
We 
deduce a version of Bezrukavnikov equivalence for quasi-split symmetric spaces and 
make connections to 
the geometric Langlands on
the twistor $\mathbb P^1$.

\end{abstract} 

\setcounter{tocdepth}{1} 
\setcounter{tocdepth}{2} 
\tableofcontents
\section{Introduction}
\subsection{Main results}
Let $G$ and $\check G$ be Langlands dual complex connected reductive groups.
Let $F=\bC((t))$ and $\calO=\bC[[t]]$
be the formal Laurent and Taylor series rings.
In \cite{BZSV} and \cite{BFN1,BFN2} the authors observe 
that one can associate to each  affine smooth $\check G$-variety $\check X$, an (super) affine Hamiltonian $G$-variety $M_{\check G,\check X}$
to be called the relative  dual (or $S$-dual, symplectic dual)  of $\check X$ with several remarkable properties. 
The construction of the relative  dual variety 
$M_{\check G,\check X}$, or rather, its coordinate ring $\bC[M_{\check G,\check X}]$, uses the idea of ring objects in 
the derived Satake category  
(or equivalently, the de equivariantized Ext-algebra):
associated with 
the loop space $\check X(F)$ of $\check X$, there is a ring object 
$\calA_{\check G,\check X}$
in the derived Satake category
$ D^b_{\check G(\calO)}(\Gr_{\check G})$ for $\check G$ such that there is a ring isomorphism
\[
    \bC[M_{\check G,\check X}]\cong \on{H}^*_{\check G(\calO)}(\Gr_{\check G},\calA_{\check G,\check X}\otimes^!\IC_{\bC[ 
G]}).
\]
Here $\IC_{\bC[G]}$
is the regular perverse sheaf on the affine Grassmannian 
$\Gr_{\check G}=\check G(F)/\check G(\calO)$ of $\check G$.
In the case when 
$\check X$ is a spherical variety (satisfying certain assumptions) the relative Langlands duality \cite{BZSV} proposes an elementary construction of $M_{\check G,\check X}$ 
from the combinatorics of $\check X$
and predicts deep connections between 
the algebraic geometry of $M_{\check G,\check X}$ and the harmonic analysis and representation theory of the spherical variety 
$\check X$.
A fundamental example is the case when
$\check X=\check H$ is a connected reductive group viewed 
as a spherical variety for $\check G=\check H\times\check H$.\footnote{Here we consider the twisted action of $H\times H$
on $H$ given by
$(h_1,h_2)\cdot h=h_1h(h_2^{-1})'$
where $h_2'$ is the image of $h_2$ under
the Chevalley involution.} 
The work \cite{ABG} implies that the coordinate ring  of the 
relative dual variety $M_{\check H\times \check H,\check H}$ of $\check H$
is isomorphic to that of the cotangent bundle $T^*H$ of  $H$
\begin{equation}\label{relative dual}
    \bC[M_{\check H\times \check H,\check H}]\cong\bC[T^*H] 
\end{equation}
and the 
derived Satake equivalence \cite{BF}, a categorificaiton of the Harish-Chandra Plancherel formula, provides a spectral description
\begin{equation}\label{DS}
   D^b_{\check H(\calO)}(\Gr_{\check H})\cong  \on{Perf}^{H\times H}_{}(T^*H[2]) 
\end{equation}
of  the  derived Satake category of $\check H$ in terms of  $H\times H$-equivariant perfect complexes on the shifted cotangent bundle $T^*H[2]$, viewed as a dg-scheme with trivial differential.

There has been a lot of  work on 
generalizing~\eqref{relative dual} and~\eqref{DS}
to a more general $\check X$, see, e.g., the works \cite{BFN1,BFN2,BFGT,CN1,CMNO,De1,M}, etc. 
In the
case when $\check X$ is a homogeneous affine $\check G$-variety with connected stabilizers, that is, $\check X\cong \check G/\check H$ where $\check H\subset\check G$ is an arbitrary connected redcutive subgroup,   
the recent work \cite{G2}
establishes an elementary description of the relative dual variety $M_{\check G,\check X}$ 
in terms of the regular centralizer group schemes of $G$ 
and $H$.
It is expected that 
when $\check X$ is spherical 
the description in \emph{loc. cit.} agrees with the construction in \cite{BZSV} 
(in a highly non-obvious way, see, e.g., \cite{CDMN}).

In this notes we will consider 
an inverse question of above story. 
Namely, instead of trying to 
find the relative dual $M_{\check G,\check X}$ 
of a given smooth affine $\check G$-variety $\check X$, we will start with an affine Hamiltonian $G$-variety 
$M$ of representation theory or algebraic geometry interests and ask whether it is the relative dual variety 
$M\cong M_{\check G,\check X}$
for some $\check G$-variety $\check X$.
From the perspective of representation theory of  real groups (see Section \ref{twistor}),
the cotangent bundle $M=T^*X$ of a 
symmetric space $X=G/K$ for $G$ would be an interesting and important example to investigate.
In this note we provide a positive answer to this question assuming $G$ is adjoint and 
 $X$ is quasi-split.
Surprisingly, we will see non-split  groups over local fields and their symmetric spaces.
To describe the answer, 
let $\theta\in\on{Aut}(G)$ be the (complex) Cartan involution on $G$ such that $K=G^\theta$.
Let $\check\theta_0$ be a pinned involution of 
$\check G$ that is dual to $\theta$ in the sense that  $\theta$ is in the inner class of  $\on{Chev}\circ\theta_0$,
where $\theta_0$ is a pinned involution of $\check G$ with the same image as
$\theta_0$ in the outer automorphism group 
$\on{Out}(\check G)\cong\on{Out}(G)$ and 
$\on{Chev}$ is the Chevalley involution of $G$ (see \eqref{theta}).
Denote by 
$\check H=\check G^{\check\theta_0}$ the corresponding symmetric subgroup.
A natural candidate for $\check X$ would be 
the symmetric space $\check X=\check G/\check H$. It works in the group case 
but does not give the correct space in general.
For example,
if $X$ is a split symmetric space then  $\theta_0$, and hence
$\check\theta_0$, is trivial and $\check X$ is just a point.
Instead we will consider the
involution 
$\check\theta:\check G(F)\to\check G(F)$ 
on the loop group $\check G(F)$ sending 
$\gamma(t)$ to
$\ct(\gamma)(t)=\check\theta_0(\gamma(-t))$.
The $\ct$-fixed points subgroup 
$\check G(F)^{\ct}$ is the so-called twisted loop group and it turns out that the correct space to consider is the quotient
\[\check X_{\ct}(F)=\check G(F)/\check G(F)^{\ct}\]
to be called 
the  loop symmetric space for 
$\check G(F)$. 
In the case when $X$ is a split symmetric space,
we have $\ct(\gamma)(t)=\gamma(-t)$ and hence
 $\check X_{\ct}(F)=\check G(F)/\check G(F')$
 where $F'=\bC((t^2))\subset F$ is the subfield of formal Laurent series  in $t^2$.
For a list of loop symmetric spaces
associated to quasi-split but non-split $X$, see Example \ref{list 3}.

We observe that the ring objects construction 
of the
dual varieties (or the de equivariantized Ext-algebras constructions) in \cite{BFN2,BZSV} 
works equally well for the loop symmetric space:
associated with $\check X_{\ct}(F)$,
there is a ring object 
$\calA_{\check G,\ct}$
in the derived Satake category of $\check G$ and we define the dual variety $M_{\check G,\ct}$ of the loop symmetric space $\check X_{\ct}(F)$ to
be the (super) affine scheme with coordinate ring 
\[\bC[M_{\check G,\ct}]=\on{H}^*_{\check G(\calO)}(\Gr_{\check G},\calA_{\check G,\ct}\otimes^!\IC_{\bC[G]})\]
(see Definition \ref{dual variety}).
We now can state our first main result: 
\begin{thm}\label{intro:main 1}
    There is a natural 
    $G$-equivariant  
    isomorphism of rings 
\begin{equation}\label{intro: main 1}
    \bC[M_{\check G,\ct}]\cong\bC[T^*X] 
\end{equation}
\end{thm}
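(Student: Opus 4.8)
The plan is to compute the de-equivariantized Ext-algebra
\[
\bC[M_{\check G,\ct}]=\on{H}^*_{\check G(\calO)}(\Gr_{\check G},\calA_{\check G,\ct}\otimes^!\IC_{\bC[G]})
\]
through derived geometric Satake for $\check G$, just as in the group case, where \eqref{relative dual} is deduced from \eqref{DS} and \cite{ABG}. By \cite{BF}, $D^b_{\check G(\calO)}(\Gr_{\check G})$ is identified with $G$-equivariant coherent complexes on the shifted dual Lie algebra $\fg^*[2]$, and $\on{H}^*_{\check G(\calO)}(\Gr_{\check G},-\otimes^!\IC_{\bC[G]})$ is the de-equivariantization functor, that is, it computes global sections over $\fg^*$ rather than over $\fg^*/G$; so the theorem reduces to the claim that under this equivalence $\calA_{\check G,\ct}$ corresponds to $\mu_*\calO_{T^*(G/K)}$, the pushforward of the structure sheaf along the moment map $\mu\colon T^*(G/K)\to\fg^*$ (with the cotangent directions placed in degree $2$), since then
\[
\on{H}^*_{\check G(\calO)}(\Gr_{\check G},\calA_{\check G,\ct}\otimes^!\IC_{\bC[G]})\cong\Gamma\big(\fg^*,\mu_*\calO_{T^*(G/K)}\big)=\bC[T^*(G/K)]=\bC[T^*X],
\]
$G$-equivariantly for the $G$-action on $T^*(G/K)$ by left translations on $G/K$. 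The one genuinely new feature over the group case is that $\check X_{\ct}(F)$ is twisted by $\check\theta_0$, and the twisted affine Grassmannian of $\check G$ is the geometric model that makes the symmetric pair $(G,K)$ — rather than a group — appear on the spectral side.

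First I would give a geometric model of the ring object $\calA_{\check G,\ct}$ and establish its purity. Let $\sigma$ be the pinned diagram automorphism of $\check G$ underlying $\check\theta_0$; then $\check G(F)^{\ct}$ is the group of $F'$-points of the quasi-split form $\check G_\sigma$ of $\check G$ over $F'=\bC((t^2))$, so the $\check G(\calO)$-orbits on $\check X_{\ct}(F)$ — equivalently the $\check G(F)^{\ct}$-orbits on $\Gr_{\check G}$ — are controlled by the $\sigma$-twisted coweight lattice of $\check G$ modulo the relative Weyl group, the same combinatorics that governs the twisted affine Grassmannian $\Gr_{\check G}^{\sigma}$, with orbit closures modeled on twisted Schubert varieties. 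Applying the ring-object construction of \cite{BFN2,BZSV} to $\check X_{\ct}(F)$, the stalks of $\calA_{\check G,\ct}$ are the compactly supported cohomologies of the intersections of $\check G(\calO)$-orbits with an arc-space model of the loop symmetric space; these intersections are, in suitable coordinates, iterated affine-space bundles over smooth varieties, so $\calA_{\check G,\ct}$ is a $!$-pure ring object. By the formality and full-faithfulness properties of $!$-pure objects established earlier in the paper, this reduces the identification of $\calA_{\check G,\ct}$ — and of the ring structure it carries — to a computation of ordinary cohomology, cup products, and $K$-module structures, with no higher differentials to control.

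Next I would match the two sides stratum by stratum. Under the Satake equivalence, the stalk of $\calA_{\check G,\ct}$ along the $\check G(\calO)$-orbit of a twisted coweight $\lambda$ corresponds to the weight-$\lambda$ (equivalently $V_\lambda$-isotypic) part of $\mu_*\calO_{T^*(G/K)}$; the latter reflects the structure of the symmetric pair $(G,K)$ and carries, over the zero stratum, the $K$-module $\Sym(\fg/\fk)=\bC[\fk^{\perp}]$ in cohomological degree $2$, and the twisted-Satake combinatorics from the previous step computes exactly these pieces out of the geometry of $\check X_{\ct}(F)$. What makes the identification forced is that the $\sigma$-twisted coweight datum of $\check G$ is Langlands-dual to the restricted root datum of $(G,K)$, and that the Chevalley twist built into $\check\theta_0$ pins the spectral-side symmetric pair to the prescribed $(G,K)$ rather than to another pair in its outer class. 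Finally, matching the unit section and the convolution/fusion product of $\calA_{\check G,\ct}$ against the unit and the multiplication of the sheaf of algebras $\mu_*\calO_{T^*(G/K)}$ upgrades the stratumwise matching to the ring isomorphism \eqref{intro: main 1}.

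The main difficulty will be precisely this identification $\calA_{\check G,\ct}\cong\mu_*\calO_{T^*(G/K)}$: one must show that the Langlands dual of the $\check G(\calO)$-orbit combinatorics of $\check X_{\ct}(F)$, enriched by the fiberwise cohomology feeding into $\calA_{\check G,\ct}$, reproduces exactly the sheaf of algebras $\mu_*\calO_{T^*(G/K)}$ — with the correct $K$-module structure in the correct cohomological degree, with no spurious contributions from non-generic orbits, nontrivial equivariant local systems, or the wrong inner form, and with the multiplication coming out right. This is where the quasi-split hypothesis on $X$ is essential: it forces the relative root datum of $\check X_{\ct}(F)$ to be minimal, so that the orbit closures are as simple as the Schubert varieties of $\Gr_{\check G}^{\sigma}$ and the stratumwise pieces assemble without correction terms; a clean consistency check is the split case $\check\theta_0=1$, where $\check X_{\ct}(F)=\check G(F)/\check G(\bC((t^2)))$ and the computation is most transparent.
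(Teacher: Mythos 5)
Your plan begins correctly — compute $\bC[M_{\check G,\ct}]$ as the de-equivariantized Ext-algebra $\Psi(\calA_{\check G,\ct})$ via derived Satake — but the proposed route from there is largely a restatement of the desired conclusion rather than a proof. Identifying $\calA_{\check G,\ct}$ with $\mu_*\calO_{T^*X}$ under Satake is \emph{equivalent} to Theorem \ref{intro:main 1}, so the real content has to lie in how you establish that identification, and the ``stratum by stratum'' plan stalls at precisely the two points you yourself flag: (i) how the $K$-module structure with correct cohomological degrees emerges from the fiberwise cohomology, and (ii) how the ring multiplication is matched. ``Matching the unit section and the convolution/fusion product\dots upgrades the stratumwise matching to the ring isomorphism'' is a hope, not an argument; for rings constructed as affine closures (which $T^*X$ here is) a vector-space matching of isotypic pieces does not by itself pin down the multiplication.

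The paper's mechanism is quite different, and it is worth seeing why it works where the direct matching is hard. First, Lemma \ref{ext algebra} rewrites $\Psi(\calA_{\check G,\ct})$ as $\on{R\Gamma}_{\check G(\calO)^{\ct}}(\Gr^{\ct},\check\rho^!\bC[G])$: all the geometry of the loop symmetric space is replaced by the single operation ``pull back the regular sheaf along the closed embedding $\check\rho\colon\Gr^{\ct}\hookrightarrow\Gr$ of the \emph{twisted} affine Grassmannian.'' Second, the equivariant homology of $\Gr^{\ct}$ is computed (Proposition \ref{cohomology}) to be $J_H$, the regular centralizer of $H=G^{\theta_0}$, and crucially $\check\rho_*$ corresponds under this identification to the \emph{norm map} $\on{Nm}\colon J_G|_{\fc_H}\to J_H$; this is the step that turns the group-theoretic fact ``$\check\rho$ is a group homomorphism $\Omega\check G_c^{\ct}\to\Omega\check G_c$'' into the algebraic ingredient that carries the entire proof. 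Third, Ginzburg's full-faithfulness package for $!$-pure objects (verified to apply via Lemma \ref{S1-S3}) gives $\bC[M_{\check G,\ct}]\cong\bC[J_H\times_{\fc_H}G\times\fc_H]^{J_G|_{\fc_H}}$ as rings, with no spectral-sequence issues because purity forces formality. Finally, and this is the part your outline does not touch at all, Kostant-Rallis theory enters through Proposition \ref{Wh=T^*X}, which identifies $\overline{(G\times J_H)/J_G|_{\fc_H}}$ with $T^*X$; the nontrivial algebraic inputs are Lemma \ref{exact} (relating $J_X$ to $\ker(\on{Nm})$) and the codimension-$\geq 2$ estimate on the irregular locus of $\fp^*$. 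Your proposal replaces this chain by an appeal to ``Langlands duality between the twisted coweight datum and the restricted root datum of $(G,K)$,'' but the twisted Satake category is controlled by $\on{Rep}(H)$, not by $K$ directly — the passage from $H$ to $K$ is exactly the Kostant-Rallis content you are missing. I would not say the direct approach is impossible in principle, but as written it has a genuine gap where the ring structure and the $K$-side geometry should be: you need either the regular-centralizer / norm-map mechanism the paper uses, or some substitute for it, and the proposal offers neither.
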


Theorem \ref{intro:main 1} is restated in Theorem \ref{dual construction}.
We refer to Section \ref{TRS} for a more
detailed explanation of the statement including the compatibility with the moment maps.
The isomorphism above can be regarded as a generalization of~\eqref{relative dual} from groups 
to all quasi-split symmetric varieties.
Note that we can regard
$\check G(F)^{\ct}$ as a symmetric subgroup for the Weil restriction 
$\on{Res}^F_{F'}\check G(F)$, viewed as a non-split group over $F'$, and $\check X_{\ct}(F)$ as a symmetric space $\on{Res}^F_{F'}\check G(F)/\check G(F)^{\ct}$ for $\on{Res}^F_{F'}\check G(F)$.
From this point of view,
Theorem \ref{intro:main 1} provides a family of concrete examples of relative dual varieties of 
symmetric spaces for non-split groups.

Our second main result extends the derived Satake equivalence in~\eqref{DSat} to quasi-split symmetric spaces:
consider the derived Satake category 
for $\check X_{\ct}(F)$, that is, the full subcategory 
$D^b_{\check G(\calO)}(\check X_{\ct}(F))_e\subset D^b_{\check G(\calO)}(\check X_{\ct}(F))$
of the $\check G(\calO)$-equivariant derived category of $\check X_{\ct}(F)$
generated by the 
dualizing sheaf $\omega_{e}$ of the 
closed $\check G(\calO)$-orbit $\check G(\calO)\cdot e$ through the base point $e\in\check X_{\ct}(F)$
under the Hecke action.
\\

\begin{thm}\label{intro:main 2}
There is a canonical equivalence
  \begin{equation}\label{intro: equ main 2}
 D^b_{\check G(\calO)}(\check X_{\ct}(F))_e \cong \on{Perf}^G(T^*X[2])
 \end{equation}
 compatible with the monoidal action 
 of derived Satake equivalence for $\check G$ on both sides.
\end{thm}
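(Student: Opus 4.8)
The plan is to run the ring-object / one-generator machinery underlying the derived Satake equivalence and the dual-variety constructions of \cite{BZSV,BFN2,BF}, feeding in Theorem~\ref{intro:main 1} to pin down the algebra that appears. Write $\on{Sat}_{\check G}:=D^b_{\check G(\calO)}(\Gr_{\check G})$ for the derived Satake category of $\check G$; it acts on $D^b_{\check G(\calO)}(\check X_{\ct}(F))$ by Hecke convolution $\calF\star(-)$, and by definition $D^b_{\check G(\calO)}(\check X_{\ct}(F))_e$ is the subcategory generated by $\omega_e$ under this action. First I would pass to a stable $\infty$-categorical (or DG) enhancement and, temporarily, to ind-completions, and check that $\calF\mapsto\calF\star\omega_e$ is monadic (the inputs being compactness of $\omega_e$, continuity of convolution, and rigidity of $\on{Sat}_{\check G}$, so that internal Homs exist); by Barr--Beck--Lurie this identifies the ind-completion of $D^b_{\check G(\calO)}(\check X_{\ct}(F))_e$ with the category of modules over the algebra object $\calA_{\check G,\ct}=\underline{\on{End}}_{\on{Sat}_{\check G}}(\omega_e)$ in $\on{Ind}(\on{Sat}_{\check G})$ --- precisely the ring object attached to $\check X_{\ct}(F)$ in the Introduction --- and restricting to compact objects recovers $D^b_{\check G(\calO)}(\check X_{\ct}(F))_e$ on the left. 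Singling out the $e$-part is essential: $\check X_{\ct}(F)$ has infinitely many $\check G(\calO)$-orbits, and only this part is of finite type and matches a coherent side; here one needs $\omega_e$ to be a \emph{compact generator} of the $e$-part, which I expect to follow from the $\check G(\calO)$-orbit structure together with the $!$-purity of $\omega_e$ used below.

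Next I would transport this through the derived geometric Satake equivalence for $\check G$ of \cite{BF}, a monoidal equivalence $\on{Sat}_{\check G}\cong\on{Perf}^G(\fg^*[2])$ (hence $\on{Ind}(\on{Sat}_{\check G})\cong\on{QCoh}^G(\fg^*[2])$). Being monoidal, it carries $\calA_{\check G,\ct}$ to an algebra object $\calB$ and $\calA_{\check G,\ct}$-modules to $\calB$-modules. Because $\calA_{\check G,\ct}$ arises from the commutative loop space $\check X_{\ct}(F)$, $\calB$ is graded-commutative, so $\calB\cong\pi_*\calO_\calM$ for the dg-scheme $\calM:=\Spec_{\fg^*[2]/G}\calB$, affine over $\fg^*[2]$, and the module category becomes $\on{QCoh}^G(\calM)$. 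The theorem is thereby reduced to a $G$-equivariant identification
\[ \calM\;\cong\;T^*X[2] \]
of dg-schemes over $\fg^*[2]$, compatibly with the projections to $\fg^*[2]$; granting this, passing to compact objects on both sides and using smoothness of $X$ (so that $\on{Perf}^G(T^*X[2])=D^b\!\on{Coh}^G(T^*X[2])$) yields the stated equivalence.

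To identify $\calM$ I would first compute its ring of global functions: $\Gamma(\calM,\calO_\calM)=\on{H}^*_{\check G(\calO)}(\Gr_{\check G},\calA_{\check G,\ct}\otimes^!\IC_{\bC[G]})=\bC[M_{\check G,\ct}]$, which Theorem~\ref{intro:main 1} identifies $G$-equivariantly with $\bC[T^*X]$; moreover the projection $\calM\to\fg^*[2]$ corresponds under this identification to the moment map $\mu:T^*X\to\fg^*$, by the moment-map compatibility recorded in Section~\ref{TRS}. Upgrading this from an isomorphism of global functions to the asserted isomorphism of dg-schemes over $\fg^*[2]/G$ needs two further inputs, and this is where the real work lies. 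First, one must know $\calM$ is classical with trivial differential, equivalently that $\underline{\on{End}}_{\on{Sat}_{\check G}}(\omega_e)$ is \emph{formal}: this is the ``formality of $!$-pure objects'' advertised in the abstract --- $\omega_e$ is $!$-pure, so its weight grading forces formality (no $A_\infty$ corrections) and, in the same stroke, the fully-faithfulness and compact generation used in the first step. Second, one must control $\calB$ as a \emph{sheaf} of algebras over $\fg^*[2]/G$, not merely its global sections; granting formality, both $\calB$ and $\mu_*\calO_{T^*X[2]}$ are relative spectra of graded $\Sym(\fg[-2])$-modules which agree after taking $G$-invariant global sections, and $\fg^*$ being affine this is enough. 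Compatibility with the $\on{Sat}_{\check G}$-action is then automatic: on the left it is the Hecke action used to generate, on the right the $\on{Perf}^G(\fg^*[2])$-linear structure given by pullback along $\mu$, and every equivalence above is $\on{Sat}_{\check G}$-linear by construction. I expect the main obstacle to be exactly this formality/purity step together with the passage from Theorem~\ref{intro:main 1} to a statement about the full ring object $\calA_{\check G,\ct}$ over $\fg^*[2]/G$ --- i.e.\ controlling $\calA_{\check G,\ct}$ itself rather than only its $!$-tensor with $\IC_{\bC[G]}$ --- for which the geometry of the $\check G(\calO)$-orbits on $\check X_{\ct}(F)$ and their relation to the twisted affine Grassmannians should be the key tool.
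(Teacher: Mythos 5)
Your proposal follows essentially the same route as the paper's proof: de-equivariantize / apply Barr--Beck--Lurie with $\omega_e$ as a compact generator of the $e$-generated subcategory, use formality coming from $!$-purity to identify the resulting algebra object with a classical graded ring, and then invoke Theorem~\ref{intro:main 1} (Theorem~\ref{dual construction}) to identify that ring $G$-equivariantly with $\bC[T^*X[2]]$ as a $\Sym(\fg[-2])$-algebra. Two small clarifications relative to your outline: (i) your ``second'' concern about controlling $\calB$ as a sheaf over $\fg^*[2]/G$ rather than merely its global sections is not a separate step --- since $\fg^*[2]/G$ is affine in the equivariant sense, a $G$-equivariant graded $\Sym(\fg[-2])$-algebra \emph{is} the sheaf of algebras, and Theorem~\ref{dual construction} already produces exactly this data; (ii) the formality is not established directly from purity of $\omega_e$ on $\check X_{\ct}(F)$, but rather by first rewriting the de-equivariantized Ext-algebra $\Psi(\calA_{\check G,\ct})$ as $\on{R\Gamma}_{\check G(\calO)^{\ct}}(\Gr^{\ct},\check\rho^!\bC[G])$ (Lemma~\ref{ext algebra}), where $\check\rho^!\bC[G]$ is $!$-pure on the twisted affine Grassmannian and Theorem~\ref{formality} applies.
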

Theorem \ref{intro:main 2} is restated in Theorem \ref{coherent sheaves}. We refer to Section \ref{TRS} for a more
detailed explanation of the statement.
The equivalence~\eqref{intro: equ main 2}
provides a constructible realization 
of the category 
$G$-equivariant perfect complexes on the shifted cotangent bundle $T^*X[2]$.
In \emph{loc. cit},
we also establish variants of the equivalence including 
a constructible realization of the full subcategory $\on{Perf}^G(T^*X[2])_{\calN_{\fp^*}}$
of perfect complexes set theoretically supported on the preimage of symmetric nilpotent cone $\calN_{\fp^*}$
under the moment map.

The proof of Theorem \ref{intro:main 1} and Theorem \ref{intro:main 2} relies on the Kostant-Rallis theory \cite{KR} and the study of ring objects in the derived Satake category $D^b_{\check G(\calO)^{\ct}}(\Gr_{\check G}^{\ct})$ for the
 twisted affine Grassmannian 
$\Gr^{\ct}_{\check G}=\check G(F)^{\ct}/\check G(\calO)^{\ct}$. The main technical results include 
 a version of  derived Satake equivalence for $\Gr_{\check G}^{\ct}$, 
extending the work of Zhu \cite{Z}
to the derived setting, 
and a 
fully-faithfullness result for 
$!$-pure objects in $D^b_{\check G(\calO)^{\ct}}(\Gr_{\check G}^{\ct})$
generalizing the work in \cite{G2}:
\begin{thm}\label{intro:main 3}
(1) There is a canonical monoidal equivalence
 \begin{equation}\label{intro:DSt}
      D^b_{\check G(\calO)^{\ct}}(\Gr^{\ct}_{\check G}) \cong \on{Perf}^{H\times H}(T^*H[2])
 \end{equation}
  here $H=G^{\theta_0}$ is the fixed points subgroup of the pinned involution $\theta_0$.

 (2)  Let $\calF\in\on{Ind}(D^b_{\check G(\calO)}(\Gr_{\check G}))$ be a very pure object and $\calE\in\on{Ind}(D^b_{\check G(\calO)^{\ct}}(\Gr_{\check G}^{\ct}))$ be a $!$-pure object.
 Let  
$\check\rho:\Gr_{\check G}^{\ct}\to\Gr_{\check G}$
be the natural inclusion.
Then there is a natural isomorphism
\begin{equation}\label{intro:twisted Ginzburg}
    \on{H}^*_{\check G(\calO)^{\ct}}(\Gr^{\ct}_{\check G},\calE\otimes^!\check\rho^!\calF)
    \cong (\on{H}^*_{\check G(\calO)^{\ct}}(\Gr^{\ct}_{\check G},\calE)\otimes_{\bC[\fc_H]}\on{H}^*_{\check G(\calO)^{\ct}}(\Gr_{\check G},\calF))^{J_G|_{\fc_H}}.
\end{equation}
Here $J_G|_{\fc_H}$ acts on 
the tensor product through the natural action on $\on{H}^*_{\check G(\calO)^{\ct}}(\Gr_{\check G},\calF)$
and the pull-back action of $J_H$ on $\on{H}^*_{\check G(\calO)^{\ct}}(\Gr^{\ct}_{\check G},\calE)$ via the norm map
$\on{Nm}:J_G|_{\fc_H}\to J_H$.
If $\calF$ and $\calE$ are ring objects then~\eqref{intro:twisted Ginzburg} is a ring isomorphism.
\end{thm}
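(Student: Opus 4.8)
The plan is to establish both parts by transporting to the twisted affine Grassmannian the circle of ideas behind the derived Satake equivalence of Bezrukavnikov--Finkelberg \cite{BF} and the fully-faithfulness results of \cite{G2}; the two external inputs are the abelian twisted geometric Satake equivalence of Zhu \cite{Z} and Kostant--Rallis theory \cite{KR}.

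\emph{Part (1).} I would start from Zhu's equivalence \cite{Z}, in the normalization fixed earlier in the paper: it identifies the abelian category of $\check G(\calO)^{\ct}$-equivariant perverse sheaves on $\Gr^{\ct}_{\check G}$ with $\on{Rep}(H)$ as symmetric monoidal categories, and in particular shows that $D^b_{\check G(\calO)^{\ct}}(\Gr^{\ct}_{\check G})$ is generated under the Hecke action by the spherical $\IC$ sheaves. The equivalence \eqref{intro:DSt} is then produced, exactly as in \cite{BF}, as a Koszul/Morita-duality statement for the bigraded endomorphism dg algebra of the regular ring object, computed by the loop-rotation equivariant cohomology functor $\on{H}^*_{\check G(\calO)^{\ct}\rtimes\bG_m}(\Gr^{\ct}_{\check G},-)$. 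The two substantive inputs are \emph{(a)} formality of this dg algebra --- the twisted $\IC$ sheaves are pure, their stalks being governed by affine Kazhdan--Lusztig polynomials and hence satisfying parity vanishing, so the standard weight argument applies; this is the instance of formality of $!$-pure objects developed in the body of the paper --- and \emph{(b)} identification of its cohomology, $H\times H$-equivariantly and with the $\bG_m$-grading matching the cohomological shift, with $\bC[T^*H[2]]$. For \emph{(b)} one has $\on{H}^*_{\check G(\calO)^{\ct}}(\Gr^{\ct}_{\check G},\IC_0)=\bC[\fc_H]$ for the base, and the twisted analogue of the Mirkovi\'c--Vilonen weight functors furnished by \cite{Z} reduces the identification of the de-equivariantized Ext-algebra, with its $H\times H$-structure, to the computation of \cite{BF} for the group $H$. (Once ordinary derived Satake for $\check G$ is available one could alternatively realize $D^b_{\check G(\calO)^{\ct}}(\Gr^{\ct}_{\check G})$ as the $\ct$-equivariant objects in $D^b_{\check G(\calO)}(\Gr_{\check G})$ along $\check\rho$ and match it with the $\ct$-fixed points on the spectral side; I would take the direct route.)

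\emph{Part (2).} Both sides of \eqref{intro:twisted Ginzburg} are functorial in $(\calE,\calF)$, commute with filtered colimits on Ind-completions, and are compatible with the Hecke action on $\calF$, so one reduces to the case where $\calE$ and $\calF$ are (shifts of) $\IC$ sheaves of spherical orbits. The first step is that $\check\rho^!\calF$ remains $!$-pure: it is supported on a union of twisted Schubert varieties, which are normal with rational singularities by \cite{Z}, and this together with the self-duality of $\calF$ forces parity vanishing, hence $!$-purity. Next, the projection formula for $\otimes^!$ along the closed embedding $\check\rho$, combined with the Ginzburg-type K\"unneth formula for $!$-pure objects generalizing \cite{G2}, expresses $\on{H}^*_{\check G(\calO)^{\ct}}(\Gr^{\ct}_{\check G},\calE\otimes^!\check\rho^!\calF)$ through a tensor product of $\on{H}^*_{\check G(\calO)^{\ct}}(\Gr^{\ct}_{\check G},\calE)$ with $\on{H}^*_{\check G(\calO)^{\ct}}(\Gr_{\check G},\calF)$, the $!$-purity hypotheses ensuring that this tensor product is underived and concentrated in the expected cohomological degrees. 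Finally, Kostant--Rallis theory \cite{KR} identifies the base and the invariants: the Kostant--Rallis embedding $\fc_H\hookrightarrow\fc_G$ makes the common base $\bC[\fc_H]$; the discrepancy between $\check G(\calO)^{\ct}$- and $\check G(\calO)$-equivariance along $\check\rho$ is measured by the group scheme $J_G|_{\fc_H}$, which acts directly on $\on{H}^*_{\check G(\calO)^{\ct}}(\Gr_{\check G},\calF)$ and acts on $\on{H}^*_{\check G(\calO)^{\ct}}(\Gr^{\ct}_{\check G},\calE)$ through the norm map $\on{Nm}:J_G|_{\fc_H}\to J_H$, so that taking $J_G|_{\fc_H}$-invariants yields the left-hand side. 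When $\calE$ and $\calF$ are ring objects, tracking the monoidal structures through these identifications upgrades the isomorphism to one of rings.

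\emph{Main obstacle.} In (1) the crux is \emph{(b)}: constructing the twisted weight functors and matching the resulting $H\times H$-equivariant graded ring with $\bC[T^*H[2]]$ on the nose, formality being comparatively soft once purity of the twisted $\IC$ sheaves is in hand. In (2) the delicate point is to show that the correction term in the projection formula is governed precisely by $J_G|_{\fc_H}$ and the norm map $\on{Nm}$; this rests on a careful Kostant--Rallis analysis of the interaction between the regular centralizers of $G$ and of $H$ over the embedded base $\fc_H\subset\fc_G$, together with the verification that $\check\rho^!$ and $\otimes^!$ preserve $!$-purity in the needed generality.
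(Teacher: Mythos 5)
Your architecture for Part (1) — de-equivariantize, apply Barr--Beck--Lurie with $\IC_0$ as generator, invoke formality from purity — matches the paper. But the key technical input you call ``the twisted analogue of the Mirkovi\'c--Vilonen weight functors \dots reduces \dots to the computation of \cite{BF} for the group $H$'' is where the argument actually breaks. The twisted affine Grassmannian $\Gr^{\ct}$ is \emph{not} the affine Grassmannian of $\check H^\vee$ for any group, so there is no such reduction; what is needed is a genuine computation of the graded Hopf algebra $\on{H}_*^{\check H}(\Gr^{\ct})$ and its identification with $\bC[J_H]$. The paper obtains this in Proposition~\ref{cohomology} by an argument that has no analogue in the abelian MV picture: it constructs a Hopf map $\psi_*^{\check H}:\bC[J_G|_{\fc_H}]\to\on{H}_*^{\check H}(\Gr^{\ct})$ from the nearby cycle functor along Zhu's degeneration $\widetilde\Gr\to\mathbb A^1$, verifies by the $\check T_{\check H}$-localization theorem that $\rho^{\check H}_*\circ\psi^{\check H}_*=\on{Nm}^*$ over the generic point, and then identifies the image of $\psi_*^{\check H}$ with $\bC[J_H]$ using Lemma~\ref{exact} on the surjectivity of the norm map. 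Without that, the claim that the de-equivariantized Ext-algebra is $\bC[\fc_H\times H]^{J_H}\cong\bC[\fh^*[2]]$ is unsupported. You also do not address monoidality of the resulting equivalence, which in the paper depends on the spectral description of the nearby cycle functor (Proposition~\ref{Comp}), another point where the degeneration $\widetilde\Gr$ is essential.

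For Part (2), the paper does not reduce to $\IC$ sheaves; it verifies that the $\check T_{\check H}$-stable Iwahori stratification of $\Gr$ and the embedding $\check\rho$ satisfy the geometric conditions (S1)--(S3) of \cite[Section 1.1]{G2} (this is Lemma~\ref{S1-S3}: each Iwahori orbit is $\check T_{\check H}$-equivariantly a vector space, $\check\rho$ restricts to a linear subspace on each, the $\check T_{\check H}$-fixed points are isolated, and there are contracting $\bG_m$-actions) and then applies \cite[Corollary 4.3.7]{G2} directly, combined with Proposition~\ref{cohomology}(2). Your reduction ``to the case where $\calE$ and $\calF$ are (shifts of) $\IC$ sheaves'' is valid for $\calF$ (very pure objects are sums of shifted $\IC$'s) but not for $\calE$, which is only assumed $!$-pure, and $!$-pure objects are not sums of shifted $\IC$'s. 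Moreover, your reason that $\check\rho^!\calF$ is $!$-pure (normality and rational singularities of Schubert varieties plus self-duality) is not the mechanism the paper relies on; what matters is the linearity statement in Lemma~\ref{S1-S3}(2), which is precisely condition (S2) of Ginzburg. So the missing pieces are the explicit verification of (S1)--(S3) for $\check\rho:\Gr^{\ct}\to\Gr$, and, as in Part (1), the identification $\rho^{\check H}\cong\on{Nm}$, which you invoke but do not establish.
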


Theorem \ref{intro:main 3}
is the combination of Theorem \ref{DSat} and Theorem \ref{full-faithfulness}. 
We refer to Section \ref{twisted Satake} and \ref{TRS} for a more
detailed explanation of the statement. The key steps in the proof of~\eqref{intro:DSt} 
include the calculation of equivariant homology of $\Gr_{\check G}^{\ct}$, extending the work \cite{BMM,G1,YZ} to the twisted setting, and the study of the nearby cycle functor along a degeneration of $\Gr_{\check G}$ to $\Gr_{\check G}^{\ct}$ introduced in \cite{Z}.
The proof strategy here is inspired by our previous work \cite{CMNO} on quaterninoic Satake equivalence.\footnote{In \emph{loc. cit.}, we study a degeneration of the affine Grassmanian 
$\Gr$ for $GL_{2n}$ to a real affine Grassmannian $\Gr_\bbR$
for the quaternionic linear group
$GL_n(\mathbb H)$
and uses it to  
establish the derived Satake equivalence for $\Gr_\bbR$.}
The proof of~\eqref{intro:twisted Ginzburg} uses  
the  observation  that the natural embedding 
$\check\rho:\Gr_{\check G}^{\ct}\to\Gr_{\check G}$ satisfies the 
assumptions introduced in \cite[Section 1.1]{G2} (see Lemma \ref{S1-S3})

The methods in the paper are applicable to 
a more general setting.
 In Section \ref{variant}, we discuss the cases 
of the triality of $Spin_8$, the  special  parahoric of 
$SU_{2n+1}(F')$ coming from an order $4$ automorphism of $SL_{2n+1}$ with fixed points subgroup isomorphic to $Sp_{2n}$
(see, e.g., \cite[Section 2.1]{BH}),
and
the subgroup $\check G(F_n)\subset \check G(F)$ where 
$F_n=\bC((t^n))$.
In particular, the above first two cases 
plus the list of in Example~\ref{list 2} 
cover all the 
twisted affine Grassmannians considered in \cite{Z} (see Remark \ref{complete}).

Finally,
   similar to the derived Satake equivalence for complex reductive groups \cite{BF}, the  equivalence~\eqref{intro: equ main 2}
    has a quantum counterpart for the loop rotation equivariant sheaves on $\check X_{\ct}(F)$ where the right hand side becomes the category of (graded) $(\fg,K)$-modules. 
  The details will appear in a forthcoming  work.

\subsection{Examples}
Let $G$ be complex adjoint semisimple group and 
$\check G$ be its complex Langlands dual group.
We have the following symmetric subgroups 
$K=G^{\theta}$, $H=G^{\theta_0}$,
$\check H=\check G^{\ct_0}$, and twisted loop group $\check G(F)^{\ct}$.
Here  $\theta_0$ 
and $\ct_0$
are pinned involutions of $G$ and $\check G$ with the same image in the 
outer automorphism group 
$\on{Out}(G)\cong\on{Out}(\check G)$,
$\theta$ is a quasi-split involution 
of $G$
inner to $\on{Chev}\circ\theta_0$, 
and $\check\theta:\check G(F)\to \check G(F)$, $\ct(\gamma)(t)=\ct_0(\gamma(-t))$.
For the list of all possible $(G,K,H,\check G,\check H,\check G(F)^{\ct})$
see Example \ref{list}, Example \ref{list 2},
and Example \ref{list 3}.
\subsubsection{Unitary periods}
Consider the  quasi-split symmetric space 
$X=PGL_{2n}/P(GL_n\times GL_n)$ for $G=PGL_{2n}$.
We have 
\[(G,K,H,\check G,\check H,\check G(F)^{\ct})=(PGL_{2n},P(GL_n\times GL_n),PSp_{2n},SL_{2n},Sp_{2n},SU_{2n}(F')).\]
The twisted affine Grassmannian is given by 
$\Gr^{\ct}_{SL_{2n}}=SU_{2n}(F')/ SU_{2n}(\calO')$
and the loop symmetric space  $\check X_{\ct}(F)=SL_{2n}(F)/SU_{2n}(F')$ is the space of Hermitian matrices over $F$ with determinant one.
Theorem \ref{intro:main 3} and Theorem \ref{intro:main 2} say:
\[D^b_{SU_{2n}(\calO')}(SU_{2n}(F')/ SU_{2n}(\calO'))\cong \on{Perf}^{PSp_{2n}\times PSp_{2n} }(T^*PSp_{2n}[2])\]
\[D^b_{SL_{2n}(\calO)}(SL_{2n}(F)/SU_{2n}(F'))_e\cong \on{Perf}^{PGL_{2n}}(T^*(PGL_{2n}/P(GL_n\times GL_n))[2]).\]
\subsubsection{Galois periods}
Consider the  split symmetric space 
$X=PGL_{2n}/PO_{2n}$ for $G=PGL_{2n}$.
We have 
\[(G,K,H,\check G,\check H,\check G(F)^{\ct})=(PGL_{2n},PO_{2n},PGL_{2n},SL_{2n},Sp_{2n},SL_{2n}(F')).\]
The twisted affine Grassmannian $\Gr_{SL_{2n}}^{\ct}=SL_{2n}(F')/SL_{2n}(\calO')\cong SL_{2n}(F)/SL_{2n}(\calO)$
is isomorphic to the  affine Grassmannian
for $SL_{2n}$
and the loop symmetric space is given by $\check X_{\ct}(F)=SL_{2n}(F)/SL_{2n}(F')$. 
Theorem \ref{intro:main 3}
reduces to the usual derived Satake equivalence for $SL_{2n}$:
 \[D^b_{SL_{2n}(\calO')}(SL_{2n}(F')/SL_{2n}(\calO'))\cong \on{Perf}^{PGL_{2n}\times PGL_{2n}}(T^*PGL_{2n}[2])\]
 and Theorem \ref{intro:main 2} says:
\[D^b_{SL_{2n}(\calO)}(SL_{2n}(F)/SL_{2n}(F'))_e\cong \on{Perf}^{PGL_{2n}}(T^*(PGL_{2n}/PO_{2n})[2]).\]

\subsection{Tamely ramified equivalence}
Using the recent work \cite{LPY}, we deduce from 
Theorem \ref{intro:main 2}
a version of Bezrukavnikov equivalence \cite{B}
for quasi-split symmetric spaces.
Namely,
consider the base change 
$T^*X\times_{\fg^*}\widetilde\fg$ 
of the Grothendieck-Springer resolution
$\widetilde\fg\to\fg^*$
along the moment map $T^*X\to\fg^*$.
Let $\check I\subset\check G(\calO)$ be an Iwahori subgroup and 
let 
$D^b_{\check I}(\check X_{\ct}(F))_e\subset D^b_{\check I}(\check X_{\ct}(F))$ 
the full subcategory 
of the $\check I$-equivariant derived category of $\check X_{\ct}(F)$
generated by $\omega_{e}$, viewed as an object in $D^b_{\check I}(\check X_{\ct}(F))$, under the
affine Hecke action.
We have the following 
constructible realization of 
$G$-equivariant perfect complexes on 
$T^*X\times_{\fg^*}\widetilde\fg$:
\begin{thm}\label{Intro: main 3}
 There is a natural equivalence
 \[D^b_{\check I}(\check X_{\ct}(F))_e\cong 
 \on{Perf}^G((T^*X\times_{\fg^*}\widetilde\fg)[2]) \]
 \end{thm}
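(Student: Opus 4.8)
The plan is to deduce Theorem \ref{Intro: main 3} from Theorem \ref{intro:main 2} by upgrading $\check G(\calO)$-equivariance to $\check I$-equivariance, using the Grothendieck--Springer mechanism of \cite{LPY}. On the automorphic side this means enhancing the Hecke action of the spherical category $D^b_{\check G(\calO)}(\Gr_{\check G})$ on $D^b_{\check G(\calO)}(\check X_{\ct}(F))_e$ to an action of the affine Hecke category $\mathcal{H}_{\on{aff}}:=D^b_{\check I}(\Fl_{\check G})$, $\Fl_{\check G}=\check G(F)/\check I$, on $D^b_{\check I}(\check X_{\ct}(F))_e$; on the spectral side it means the derived base change $\on{Perf}^G(T^*X[2])\rightsquigarrow\on{Perf}^G((T^*X\times_{\fg^*}\widetilde\fg)[2])$ along the Grothendieck--Springer map $\widetilde\fg\to\fg^*$. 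The role of \cite{LPY} is to make these two operations compatible through derived Satake, so that the two enhancements are matched.

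First I would set up the categorical framework. The affine Hecke category $\mathcal{H}_{\on{aff}}$ acts by convolution on $D^b_{\check I}(\check X_{\ct}(F))$, and by definition $D^b_{\check I}(\check X_{\ct}(F))_e$ is the full subcategory it generates from $\omega_e$. The forgetful functor to $\check G(\calO)$-equivariance and its (partially defined) adjoint intertwine this with the Hecke action of $D^b_{\check G(\calO)}(\Gr_{\check G})$ on $D^b_{\check G(\calO)}(\check X_{\ct}(F))_e$, via pushforward along $\check I\backslash\check G(F)/\check I\to\check G(\calO)\backslash\check G(F)/\check G(\calO)$; in particular $D^b_{\check I}(\check X_{\ct}(F))_e$ is an $\mathcal{H}_{\on{aff}}$-module compatible with the spherical module $D^b_{\check G(\calO)}(\check X_{\ct}(F))_e$. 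The main automorphic input, which I take from \cite{LPY}, is that this exhibits $D^b_{\check I}(\check X_{\ct}(F))_e$ as the Grothendieck--Springer enhancement of $D^b_{\check G(\calO)}(\check X_{\ct}(F))_e$, i.e. as the relative tensor product of the latter with $\mathcal{H}_{\on{aff}}$ over the spherical category.

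Next, on the spectral side, \cite{LPY} identifies $\mathcal{H}_{\on{aff}}$, compatibly with derived Satake $D^b_{\check G(\calO)}(\Gr_{\check G})\cong\on{Perf}^G(\fg^*[2])$ as a monoidal category, with $\on{Coh}^G$ of the Steinberg variety $\widetilde\fg\times_{\fg^*}\widetilde\fg$ (here $\fg\cong\fg^*$ via a non-degenerate invariant form), the two ``evaluation'' maps to $\on{Perf}^G(\fg^*[2])$ being the two projections $\widetilde\fg\times_{\fg^*}\widetilde\fg\to\widetilde\fg\to\fg^*$. Combining with Theorem \ref{intro:main 2} and its compatibility with the moment map $\mu\colon T^*X\to\fg^*$ recorded in Section \ref{TRS}, the relative tensor product of the previous step becomes
\[\on{Perf}^G(T^*X[2])\underset{\on{Perf}^G(\fg^*[2])}{\otimes}\on{Coh}^G\big((\widetilde\fg\times_{\fg^*}\widetilde\fg)[2]\big)\ \cong\ \on{Perf}^G\big((T^*X\times_{\fg^*}\widetilde\fg)[2]\big),\]
the last isomorphism being derived base change for coherent sheaves; the shift conventions make all the fibre products automatically derived, so no Tor-independence hypothesis is needed. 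This yields the asserted equivalence.

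The main obstacle is the first (automorphic) step: verifying that passing from $\check G(\calO)$- to $\check I$-equivariant sheaves on the loop symmetric space is really governed by \cite{LPY}, i.e. checking the hypotheses under which the $\check I$-equivariant category is the Grothendieck--Springer enhancement of the $\check G(\calO)$-equivariant one. This requires controlling the $\check I$-orbit stratification of the ind-scheme $\check X_{\ct}(F)$ and the interaction of the generation condition (the subcategory generated by $\omega_e$) with the averaging and Wakimoto-type functors, as well as keeping careful track of monodromy: the appearance of the full Grothendieck--Springer resolution $\widetilde\fg$ (rather than the Springer resolution $\widetilde{\mathcal N}$) reflects that one must work with the unipotently-monodromic --- equivalently, pro-unipotent Iwahori --- version throughout, and identifying this with the ``tamely ramified'' spectral side is the delicate point. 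A secondary check is that the affine Hecke action on $D^b_{\check I}(\check X_{\ct}(F))_e$ is compatible with the monoidal structures on both sides of Theorem \ref{intro:main 2}; this should follow formally once the framework of \cite{LPY} is in place, given the monoidal compatibility already established there.
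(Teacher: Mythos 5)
Your proposal follows essentially the same route as the paper: Theorem~\ref{twisted DSat} (= Theorem~\ref{intro:main 2}) is combined with \cite[Theorem 5.1]{LPY}, read as matching the Iwahori-equivariant enhancement on the automorphic side with base change along $\widetilde\fg\to\fg^*$ on the spectral side. The one point you flag as the ``main obstacle'' --- verifying that $\check X_{\ct}(F)$ and its $\check I$- and $\check G(\calO)$-orbit stratifications satisfy the hypotheses needed to invoke \cite{LPY} --- is precisely what the paper defers to the placidness and dimension-theory results announced in the forthcoming work \cite{CY2}, so your assessment of where the real work lies is accurate.
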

Theorem \ref{Intro: main 3}
is restated in Theorem \ref{Bez equivalence}.
We refer to Section \ref{Bez equivalence} for a more
detailed explanation of the statement including its 
relationship with \cite{B}. 
\subsection{Twsitor $\mathbb P^1$}\label{twistor}
We were led to the derived Satake equivalence for the loop symmetric spaces~\eqref{intro: equ main 2} from the connections to 
geometric Langlands on the twistor $\mathbb P^1$ pioneered in \cite[Section 4.9]{BZN}.
We briefly explain the connections here,
including a Twisted Relative Langlands Duality conjecture providing a spectral descripton of the whole constructible category of 
$\check X_{\ct}(F)$.
Details will  appear in  forthcoming works
\cite{C,CN2,CY3}.

Let $\check\eta_0$ be the conjugation 
on $\check G$ such that such that 
$\check\eta_0\circ\check\theta_0=\check\theta_0\circ\check\eta_0$ is a compact conjugation
and let $\check G_\bbR=\check G^{\check\eta_0}$
be the corresponding real group.
Let $\Bun_{\check G}(\mathbb P^1)$ the moduli stack of $\check G$-bundles on the complex projective line  $\bbP^1$.
The Galois group $\on{Gal}(\bbC/\bbR)$ of $\bbR$
acts on $\bbP^1$ via the 
the antipodal conjugation 
$\alpha:\mathbb P^1\to\mathbb P^1$ sending $\alpha(z)=-\bar z^{-1}$,
 and we denote by 
$\widetilde{\mathbb P}^1_\bbR=\mathbb P^1/\on{Gal}(\bbC/\bbR)$ 
the Galois-descent known as the
twistor $\mathbb P^1$.
The conjugation $\check\eta_0$ on $\check G$ and $\alpha$ on $\bbP^1$ induces a conjugation 
$\check\eta:\Bun_{\check G}(\bbP^1)\to \Bun_{\check G}(\bbP^1)$ and we denote by
$\Bun_{\check G_\bbR}(\widetilde{\mathbb P}^1_\bbR)=
\Bun_{\check G}(\bbP^1)^{\check\eta}$ the real analytic stack of real points of 
$\Bun_{\check G}(\bbP^1)$. It classifies a $\check G$-bundle $\mE$ on $\bbP^1$
together with an isomorphism $\iota:\mE\is \check\eta(\mE)$
such that the composition
\[\mE\stackrel{\iota}\longrightarrow \check\eta(\mE)\stackrel{\check\eta(\iota)}\longrightarrow \check\eta(\check\eta(\mE))\]
is the identity.
In the forthcoming works \cite{CN2,CY3}, 
we establish a Matsuki equivalence
\begin{equation}\label{intro:Matsuki}
  D^b_{\check G(\calO)}(\check X_{\ct}(F))\cong D_!(\Bun_{\check G_\bbR}(\widetilde{\mathbb P}^1_\bbR))  
\end{equation}
relating the  $\check G(\calO)$-equivariant derived 
category of 
$\check X_{\ct}(F)$ with the 
 category of constructible complexes  on the moduli stack  
real bundles on $\widetilde{\mathbb P}^1_\bbR$ that are extension by zero off of finite type substacks.
The equivalence~\eqref{intro:Matsuki}
 is a loop symmetric space version of the Matsuki equivalence in
\cite[Theorem 1.2]{CN1} relating the  
the  $\check G(\calO)$-equivariant derived 
category of loop space $\check X(F)$
of the symmetric space $\check X=\check G/\check G^{\ct_0}$ (instead of the loop symmetric space $\check X_{\ct}(F)$ considered in the paper) with the 
 category of constructible complexes  on the moduli stack  
real bundles on the real projective line ${\mathbb P}^1(\mathbb R)$.

On the other hand, consider
 the $\theta$-anti-fixed point 
$X_\theta=G^{\on{inv}\circ\theta}=\{g\in G|\theta(g)=g^{-1}\}$, viewed as a $G$-variety with the $\theta$-twisted conjugation action, and 
the semi-direct product
$G^L=G\rtimes_{\theta}\mu_2$, where  
$\mu_2$ acts on $G$ via the involution
$\theta$.
Denote by
$\on{Loc}_{G^L}(\widetilde{\mathbb P}^1_\bbR)$ the derived Betti moduli of  
$G^L$-local system on $\widetilde{\mathbb P}^1_\bbR$ together with an isomorphism 
of the induced $\mu_2$-local system on $\widetilde{\mathbb P}^1_\bbR$ with the 
one given by the $\mu_2$-covering $\mathbb P^1\to\widetilde{\mathbb P}^1_\bbR$.
Then there is Koszul duality equivalence
\begin{equation}\label{kd}
    \on{Perf}^G(T^*X_\theta[2])\cong \on{Coh}(\on{Loc}_{G^L}(\widetilde{\mathbb P}^1_\bbR)) 
\end{equation}
relating the dg derived category 
of coherent complexes on the cotangent bundle $T^*X_\theta$ and $\on{Loc}_{G^L}(\widetilde{\mathbb P}^1_\bbR)$. We now can state:
\begin{conjecture}\label{GL for twistor}
There are horizontal equivalences 
in the following  diagram 
\[\xymatrix{D^b_{\check G(\calO)}(\check X_{\ct}(F))\ar[r]^{\Upsilon_{\ct}\ \ }_{\simeq\ \ \ }\ar[d]_{\simeq}^{~\eqref{intro:Matsuki}}&\on{Perf}^G(T^*X_\theta[2])\ar[d]_{\simeq}^{\eqref{kd}}\\
D_!(\Bun_{\check G_\bbR}(\widetilde{\mathbb P}^1_\bbR))\ar[r]^{\Upsilon_\bbR}_{\simeq}&\on{Coh}(\Loc_{G^L}(\widetilde{\mathbb P}^1_\bbR))}\]
\end{conjecture}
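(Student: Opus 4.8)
The plan is to reduce the entire square to the construction of the top arrow $\Upsilon_{\ct}$ and then obtain $\Upsilon_\bbR$ by transport along the vertical equivalences \eqref{intro:Matsuki} and \eqref{kd}, both of which may be taken as given (the Matsuki equivalence from the forthcoming \cite{CN2,CY3}, the Koszul duality being formal). Once $\Upsilon_{\ct}$ is produced, one sets $\Upsilon_\bbR$ equal to the composite of the inverse of \eqref{intro:Matsuki}, then $\Upsilon_{\ct}$, then \eqref{kd}; the square commutes by construction and $\Upsilon_\bbR$ is automatically an equivalence. The substantive points are therefore (a) that $\Upsilon_{\ct}$ extends to the \emph{whole} category $D^b_{\check G(\calO)}(\check X_{\ct}(F))$ — Theorem~\ref{intro:main 2} only supplies the block generated by $\omega_e$ — and (b) that the resulting $\Upsilon_\bbR$ is the geometric Langlands functor for $\widetilde{\mathbb P}^1_\bbR$, which is a matter of checking Hecke/spectral-equivariance, an eigen-normalization, and rigidity.

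For (a): under the $\theta$-twisted conjugation action the orbit $G\cdot e\subset X_\theta$ is $X=G/K$, and since $T_eX_\theta=\fp$ (the $(-1)$-eigenspace of $d\theta$ on $\fg$) has dimension $\dim X$, the orbit $X$ is \emph{open} in $X_\theta$; hence $T^*X_\theta|_X=T^*X$, restriction to this open locus is a Verdier localization $\on{Perf}^G(T^*X_\theta[2])\to\on{Perf}^G(T^*X[2])$, and under Theorem~\ref{intro:main 2} it matches the restriction of $D^b_{\check G(\calO)}(\check X_{\ct}(F))$ to its block generated by $\omega_e$. To upgrade this to an equivalence of the full categories I would stratify. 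The Kostant--Rallis theory for the twisted involution $\ct$ that already underlies the proof of Theorem~\ref{intro:main 2} (and Zhu's twisted Satake \cite{Z}) organizes the $\check G(\calO)$-orbits on $\check X_{\ct}(F)$ into finitely many families, one for each $G$-conjugacy class of involution $\theta'$ of $G$ inner to $\theta$, each with a distinguished closed orbit $\check G(\calO)\cdot e_{\theta'}$; dually, $X_\theta=\bigsqcup_{\theta'}G/G^{\theta'}$ is the decomposition into $\theta$-twisted conjugacy orbits indexed by the same set, and these orbits, being separated by the $G$-invariant ``characteristic polynomial'' functions, are closed, so $\on{Perf}^G(T^*X_\theta[2])=\bigoplus_{\theta'}\on{Perf}^G(T^*(G/G^{\theta'})[2])$. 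One then shows $D^b_{\check G(\calO)}(\check X_{\ct}(F))=\bigoplus_{\theta'}D^b_{\check G(\calO)}(\check X_{\ct}(F))_{e_{\theta'}}$ (no strata from distinct families lie in each other's closures, since the invariants already separate them at the reduced level) and identifies each summand with $\on{Perf}^G(T^*(G/G^{\theta'})[2])$ by rerunning the argument of Theorem~\ref{intro:main 2} with base point $e_{\theta'}$: Theorem~\ref{intro:main 3}(1) gives the twisted derived Satake, Theorem~\ref{intro:main 3}(2) the $!$-pure fully-faithfulness, and Kostant--Rallis for $G^{\theta'}$ identifies the de-equivariantized Ext-algebra of $\omega_{e_{\theta'}}$ with $\bC[T^*(G/G^{\theta'})]$. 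Summing gives the derived-Satake-linear equivalence $\Upsilon_{\ct}$.

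For (b): the derived-Satake-linear structure on $\Upsilon_{\ct}$ transports to a Hecke/spectral-equivariant structure on $\Upsilon_\bbR$: under \eqref{intro:Matsuki} the $\Gr_{\check G}$-Hecke action becomes the action by Hecke modifications at the real place of $\widetilde{\mathbb P}^1_\bbR$ on $D_!(\Bun_{\check G_\bbR}(\widetilde{\mathbb P}^1_\bbR))$, and under \eqref{kd} together with derived geometric Satake for $\check G$ it becomes the spectral action tensoring with the tautological local system at that place on $\on{Coh}(\Loc_{G^L}(\widetilde{\mathbb P}^1_\bbR))$. Normalizing $\Upsilon_\bbR$ on the vacuum object — the constant/dualizing sheaf, corresponding under \eqref{intro:Matsuki} to $\omega_e$ and under \eqref{kd} to the structure sheaf of the trivial-$G^L$-local-system locus, i.e.\ the image of $e\in X_\theta$ — pins it down as the relative geometric Langlands equivalence for the twistor $\mathbb P^1$, and one checks compatibility with complex geometric Langlands on $\mathbb P^1$ via the double cover $\mathbb P^1\to\widetilde{\mathbb P}^1_\bbR$; consistency with the tamely ramified Theorem~\ref{Intro: main 3} follows by restricting along $\check I\subset\check G(\calO)$ and degenerating $\widetilde{\mathbb P}^1_\bbR$ to a nodal curve.

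The main obstacle I expect is step (a): proving that the $\check G(\calO)$-orbit families on $\check X_{\ct}(F)$ are mutually ``disconnected'' in the closure order — equivalently, that the Matsuki-type correspondence is family-wise — requires controlling the full orbit combinatorics of the twisted loop group, not merely the single orbit treated in Theorem~\ref{intro:main 2}; and rerunning that proof uniformly over all base points $e_{\theta'}$ forces one to apply Kostant--Rallis to the possibly non-quasi-split subgroups $G^{\theta'}$ and to verify that the purity of the generating sheaves $\omega_{e_{\theta'}}$ and the very-purity of their Hecke images persist. If some families turn out to be nested, step (a) instead requires a genuine recollement matching the constructible nearby-cycle gluing with the singular-support gluing for perfect complexes on $T^*X_\theta$, which would be substantially harder; the inputs \eqref{intro:Matsuki} and \eqref{kd} are themselves nontrivial but are established in \cite{CN2,CY3,C}.
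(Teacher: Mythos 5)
The statement you are attempting to prove is labelled a \emph{Conjecture} in the paper, and the paper does not prove it. The text surrounding Conjecture~\ref{GL for twistor} explicitly says only that Theorem~\ref{intro:main 2} ``provides an evidence'' for it, and that one \emph{expects} compatibility via the full embedding $\on{Perf}^G(T^*X[2])\hookrightarrow\on{Perf}^G(T^*X_\theta[2])$. So there is no in-paper proof for your proposal to be compared against; I evaluate it on its own terms.

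Your step (a) is exactly where the proof is missing, and your own hedge at the end identifies the issue correctly. On the spectral side the decomposition is genuine: the paper states that $X_\theta=\bigsqcup_{\sigma\in\Sigma}X_\sigma$ is a disjoint union of \emph{closed} $G$-orbits, so each piece is clopen and $\on{Perf}^G(T^*X_\theta[2])\cong\bigoplus_{\sigma}\on{Perf}^G(T^*X_\sigma[2])$ is an honest orthogonal decomposition. But the mirror statement on the automorphic side --- that $D^b_{\check G(\calO)}(\check X_{\ct}(F))$ splits as a direct sum of blocks $D^b_{\check G(\calO)}(\check X_{\ct}(F))_{e_{\theta'}}$ --- is not established anywhere in the paper, and nothing in Theorem~\ref{intro:main 2} or Theorem~\ref{intro:main 3} gives it. The $\check G(\calO)$-orbit stratification of the loop symmetric space $\check X_{\ct}(F)$ is an infinite stratification whose closure relations mix the families; the expectation from the finite-dimensional Matsuki picture (and from the loop-space Matsuki correspondence of \cite{CN1,CN2}) is precisely that distinct families \emph{do} interleave in the closure order, so the automorphic category carries a nontrivial filtration and the blocks are glued, not split. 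Koszul duality is allowed to turn a filtered category into a direct sum on the other side, so the mismatch is not a contradiction, but it means your ``separation of invariants'' argument cannot close the gap: what is needed is precisely the recollement identification you flag as ``substantially harder,'' and that is the real content of the conjecture. Two further points: (i) rerunning Theorem~\ref{intro:main 2} at a base point $e_{\theta'}$ for $\theta'$ not in the same $G$-conjugacy class as $\theta$ requires Kostant--Rallis and the codimension-two and purity inputs for the possibly non-quasi-split symmetric subgroup $G^{\theta'}$, which the paper restricts to the quasi-split case (Lemma~\ref{regular}(6) and the remark following it flag this); (ii) both vertical arrows \eqref{intro:Matsuki} and \eqref{kd} are themselves cited to forthcoming work \cite{CN2,CY3,C} and are not theorems of this paper, so even granting step (a) the construction of $\Upsilon_\bbR$ by transport is conditional. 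In short, the proposal is a reasonable roadmap that coincides with the paper's stated expectation of how the conjecture should be compatible with Theorem~\ref{intro:main 2}, but it does not constitute a proof: the decomposition/recollement step and the non-quasi-split extensions of the Satake machinery are open.
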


We will refer the conjectural equivalence 
$\Upsilon_{\ct}$ (resp. $\Upsilon_\bbR$) the twisted relative Langlands duality (resp. geometric Langlands on the twistor $\mathbb P^1$).
The commutative diagram above basically says that, under the Matsuki equivalence in~\eqref{intro:Matsuki},
the two conjectures are  \emph{Koszul dual} to each other.

The derived Satake equivalence 
for loop symmetric spaces 
in Theorem \ref{intro:main 2}
provides an evidence for Conjecture \ref{GL for twistor}.
Namely, according to \cite[Section 9]{R}, 
$X_\theta$ is a finite union of 
$\theta$-twisted closed $G$-orbits 
\[X_\theta=\bigsqcup_{\sigma\in \Sigma} X_\sigma\cong G/K_\sigma\]
Here $\Sigma$ is the set  of conjugacy classes of involutions which are inner
to $\theta$, and 
$K_\sigma=G^{\sigma}$
is the symmetric subgroup 
associated to the involution 
$\sigma$.
The base point $\theta\in \Sigma$
corresponds to the our symmetric space 
$X=G/K$ and hence an embedding 
$\on{Perf}^G(T^*X[2])\hookrightarrow\on{Perf}^G(T^*X_\theta[2])\cong\bigoplus_{\sigma\in\Sigma}\on{Perf}^G(T^*X_\sigma[2])$.
We expect Conjecture \ref{GL for twistor} is compatible with 
the equivalence 
$D^b_{\check G(\calO)}(\check X_{\ct}(F))_{e}\cong 
 \on{Perf}^G(T^*X[2])$ in 
Theorem \ref{intro:main 2}
in the sense that 
the following  diagram is commutative:
\[\xymatrix{D^b_{\check G(\calO)}(\check X_{\ct}(F))_e\ar[r]_{\simeq}^{\eqref{intro: equ main 2}}\ar@{^{(}->}[d]_{}^{}&\on{Perf}^G(T^*X[2])\ar@{^{(}->}[d]_{}^{}\\
D^b_{\check G(\calO)}(\check X_{\ct}(F))\ar[r]^{\Upsilon_{\ct}}_{\simeq}&\on{Perf}^G(T^*X_\theta[2])}\]
where the vertical arrows are the natural full embeddings.

\begin{remark}
(1) 
There is 
also formulate a tamely ramified version
of Conjecture \ref{GL for twistor}
relating the 
tamely ramified and analytic
geometric Langlands on the twistor $\mathbb P^1$
in \cite[Section 3.2]{BZN} and \cite{S}
with the Iwahori equivariant derived category of $\check X_{\ct}(F)$, see \cite{C}. 
Theorem \ref{Intro: main 3} provides an evidence of such a generalization.

(2) In \cite{Da}, Dougal Davis independently comes up with a similar 
equivalence $\Upsilon_{\ct}$ 
from a different consideration.
\end{remark}

\subsection{Organization}
In Section \ref{KR theory}, we recall 
the Kostant-Rallis theory and provide a description of contangent bundle of 
quasi-split symmetric spaces in terms of regular centralizers group schemes.
In Section \ref{twisted Satake}, 
we prove the derived Satake equivalence for twisted affine Grassmannians.
In Section \ref{TRS}, 
we study ring objects in the twisted derived Satake category arsing from loop symmetric spaces. 
We generalize the results \cite{G2}
and apply them to prove 
Theorem \ref{intro:main 1} and Theorem \ref{intro:main 2}.

\subsection{Acknowledgements} 
The paper is inspired by the 
work of Ben-Zvi and Nadler \cite{BZN}
on geometric Langlands on the twistor $\mathbb P^1$ and the work of 
Ginzburg \cite{G2}.
The author would like to thank 
Sanath Devalapurkar,
 Dougal Davis, 
 Victor Ginzburg,
Jiuzu Hong, Mark Macerato,
 David Nadler, Akshay Venkatesh, and Lingfei Yi for many useful discussions.
The research of
T.-H.~Chen is supported by NSF grant DMS-2143722.

\section{Cotangent bundles of symmetric spaces}\label{KR theory}
\subsection{Symmetric spaces}\label{symemtric pair}
Let $G$ be a complex adjoint semisimple group.
We fix a pinning $(G,B,T,x)$ where $(B,T)$ is a Borel pair 
and $x=\sum_{\alpha\in\Delta} x_\alpha\in\on{Lie}B$,
here $\Delta$ is the set of simple roots and 
$x_\alpha$ is a basis for the weight space $(\on{Lie}G)_\alpha$. 

Let $\on{Aut}(G,B,T,x)\subset\on{Aut}(G)$ be the subgroup of pinned automorphisms. Since $G$ is adjoint there is an isomorphism between $\on{Aut}(G,B,T,x)$ with the automorphism group of the Dynkin diagram of $G$.
Let $\on{Chev}\in \on{Aut}(G,B,T,x)$ be the Chevalley involution, that is, the unique involution 
such that $\on{Chev}(t)=w_0(t^{-1})$
for any $t\in T_0$, here $w_0$ is the longest element in the Weyl group $\rW_G=N_G(T)/T$.
We have $\on{Chev}=\on{id}$ if and only if 
$G$ is not of type $A_n, D_{2n+1}, E_6$. 
According to \cite[Lemma 5.3 and 5.4]{AV}, we can find a
lifting $n_0\in N_G(T)$ of $w_0$
such that $n_0^2=1$
and $\sigma(n_0)=n_0$ for all $\sigma\in\on{Aut}(G,B,T,x)$.
In particular, the composition 
$\on{Ad}_{n_0}\circ\on{Chev}$ is an involution satisfying 
$\on{Ad}_{n_0}\circ\on{Chev}(t)=t^{-1}$
for all $t\in T_0$, that is, a split involution.

Let $\theta_0\in\on{Aut}(G,B,T,x)$
be a pinned involution.
Then the composition
\begin{equation}\label{theta}
\theta=\on{Ad}_{n_0}\circ\on{Chev}\circ\theta_0
\end{equation}
defines a quasi-split involution. Indeed we have 
$\theta(B)\cap B=n_0Bn_0^{-1}\cap B=T$.
We will write 
$H=G^{\theta_0}$
and $K=G^{\theta}$
for the corresponding symmetric subgroups.

\begin{example}\label{list}
(1) If $\theta_0=\on{id}$, we have $H=G$
and $(G,K)$ is a split symmetric pair.

(2) If $\theta_0\neq\on{id}$, then $G$ must be of type 
$A_n, D_n, E_6$ and $(G,K)$ is a quasi-split but not split 
symmetric pair. Here is a list of the triple
$(G,K,H)$:
\[(PGL_{2n+1},P(GL_{n+1}\times GL_n), SO_{2n+1}), (PGL_{2n},P(GL_{n}\times GL_n)), PSp_{2n}), (PSO_{2n},PGL_n,SO_{2n-1}),\]
\[
(E_6^{ad},SL_6\times SL_2/\on{center},F_4).\]
\end{example}

Although $K$ might be disconnected,  
from the list above, we see that 
$H=G^{\theta_0}$ 
and $T_H=T^{\theta_0}$ are connected \cite[Lemma 4.6]{Z}.
Since 
$(T,B)$ is a preserved by $\theta_0$, 
it restricts to a Borel pair 
$(T_H=T\cap H,B_H=B\cap H)$ of $H$.
We have  
$\theta\circ\theta_0=\theta_0\circ\theta$
and $\theta$ restricts to a split involution on $H$ with 
symmetric subgroup $M=H^\theta$ and 
split maximal torus 
$A=T_H\subset H$.
Indeed, 
we have $\theta(t)=\on{Ad}_{n_0}\circ\on{Chev}(a)=a^{-1}$
for $a\in A$.

We denote by $X=G/K$ and $Y=H/M$ the symmetric spaces associated to the 
symmetric pairs $(G,K)$ and $(H,M)$.

\subsection{Reminder on Kostant-Rallis theory}
Let $\fg,\fb,\ft,\fk,\frak h,\frak m,\fa=\ft_H$ be the Lie algebras of 
$G,B,T,K,H,M, A=T_H$ and $\fg^*,\fb^*,\ft^*,\fk^*,\frak h^*,\fa^*=\ft_H^*$ be their dual spaces.  
Consider the  Cartan decompositions
$\fg=\fk\oplus\fp$ and $\frak h=\frak m\oplus\frak q$
where $\fp$ is the $(-1)$-eigenspace of 
$\theta$ and $\frak q=\frak h\cap\fp$. 
Note that $\fa=\ft_H$ is at the same time 
the maximal abelain subalgebra $\fa\subset\fp$ of $\fp$
and the split maximal Cartan subalgebra of $\ft_H\subset \frak h$ of $\frak h$.

Let $\rW_H=N_H(\fa)/Z_H(\fa)$ be the Weyl group of $H$.
Let $\rW_X=N_K(\fa)/Z_K(\fa)$ 
and $\rW_Y=N_M(\fa^*)/Z_M(\fa^*)$
be the little Weyl groups of the symmetric pair 
$(G,K)$ and $(H,M)$.
Denote by $\fc=\ft^*//\rW$, $\fc_H=\fa^*//\rW_H$,
and $\fc_X=\fa^*//\rW_X$, $\fc_Y=\fa^*//\rW_Y$.
Since $(H,M)$ is a split pair we have 
$\rW_H=\rW_Y$ and $\fc_H=\fc_Y$.

\begin{lemma}\label{equ}
(1) We have $\theta=-\theta_0$ on $\ft$ and hence an isomorphism $\rW_{G}^\theta\cong\rW_G^{\theta_0}$.

(2) We have the following diagram of isomorphisms
\begin{equation}\label{Weyl groups}
\xymatrix{\rW_H\ar[r]^\simeq\ar[d]^\simeq&\rW^{\theta_0}\ar[d]^\simeq\\
    \rW_\fa\ar[r]^\simeq&\rW^{\theta}}
\end{equation}
where the horizontal arrows  are the natural maps and the left vertical arrow is induced by the natural  map 
    $\rW_H=N_M(\fa)/Z_M(\fa)\to N_K(\fa)/Z_K(\fa)=\rW_\fa$.
    
    (3) The natural embedding 
    $\fa^*\to\ft^*$ induces a closed embedding $\fc_X\cong\fc_H\to\fc$.
\end{lemma}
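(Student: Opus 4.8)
\emph{Proof strategy.} The plan is to prove (1), (2), (3) in turn: (1) is a short computation of $\theta|_\ft$, (2) is a diagram chase resting on one structural input about fixed points of $\theta_0$, and (3) combines (2) with the Chevalley and Kostant--Rallis restriction theorems.

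For part (1) I would simply differentiate $\on{Chev}(t)=w_0(t^{-1})$ to get $\on{Chev}|_\ft=-w_0$; together with $\on{Ad}_{n_0}|_\ft=w_0$ and $w_0^2=1$ this gives $(\on{Ad}_{n_0}\circ\on{Chev})|_\ft=-\on{id}_\ft$, hence $\theta|_\ft=-\theta_0|_\ft$. To pass to Weyl groups I would use that an automorphism $\phi$ of $G$ stabilising $T$ acts on $\rW=N_G(T)/T$ as conjugation by $\phi|_\ft$ under the faithful embedding $\rW\hookrightarrow\GL(\ft)$ (if $n$ lifts $w$ then $\on{Ad}_{\phi(n)}|_\ft=\phi|_\ft\, w\,(\phi|_\ft)^{-1}$); since $-\on{id}_\ft$ is central in $\GL(\ft)$, conjugation by $\theta|_\ft=-\theta_0|_\ft$ equals conjugation by $\theta_0|_\ft$, so $\theta$ and $\theta_0$ induce the \emph{same} automorphism of $\rW$ and $\rW^\theta=\rW^{\theta_0}$ as subgroups of $\rW$. (That $\theta$ stabilises $T$ is immediate, being a composite of $\on{Ad}_{n_0}$, $\on{Chev}$, $\theta_0$.)

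For part (2) I would first make all four arrows explicit. The key preliminary is that, $\theta_0$ being a pinned (diagram) automorphism, no root of $\ft$ vanishes on $\fa=\ft^{\theta_0}$ (a diagram automorphism preserves the positive roots, so no root $\alpha$ satisfies $\theta_0(\alpha)=-\alpha$); hence $Z_\fg(\fa)=\ft$, so $Z_G(T_H)=T$ (centralisers of tori are connected) and $Z_K(\fa)=Z_G(\fa)\cap K=T\cap K=T^\theta$. From this: any $n\in N_K(\fa)$ normalises $\fk$ and $\fa$, hence $Z_\fk(\fa)$, which by (1) is $\ft^\theta$; so $n$ normalises $\fa\oplus\ft^\theta=\ft$, hence $T$; likewise $N_H(T_H)\subseteq N_G(Z_G(T_H))=N_G(T)$. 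Reducing mod $T$ and using that representatives lie in $K=G^\theta$, resp.\ $H=G^{\theta_0}$, yields the bottom and top arrows $b\colon\rW_\fa\to\rW^\theta$ and $t\colon\rW_H\to\rW^{\theta_0}$; the left arrow $l\colon\rW_H\to\rW_\fa$ comes from $M=H^\theta\hookrightarrow K$ (using $\rW_H=N_M(\fa)/Z_M(\fa)$ for the split pair $(H,M)$), and the right arrow is the identification $\rW^{\theta_0}=\rW^\theta$ of (1). Each of $t,l,b$ is injective because its kernel lands in the denominator ($N_K(\fa)\cap T=Z_K(\fa)$, $N_H(T_H)\cap T=T_H$, $N_M(\fa)\cap Z_K(\fa)=Z_M(\fa)$), and the square commutes because every map is a restriction of the natural map to $\rW$. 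The one substantive input is surjectivity of $t$: this is the classical description of the Weyl group of the fixed-point subgroup of a quasi-semisimple automorphism (here $\theta_0$), for which I would cite \cite{AV, Z}. Given this, $b\circ l=t$ is bijective while $b,l$ are injective, so $b$ and $l$ are isomorphisms.

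For part (3) I would deduce $\fc_H=\fc_X$ from (2): the isomorphism $l$ is induced by $M\hookrightarrow K$, hence intertwines the (co)adjoint actions of $\rW_H$ and $\rW_X=\rW_\fa$ on $\fa^*$, so $\fa^*//\rW_H=\fa^*//\rW_X$. For the embedding into $\fc$, I would view $\fa^*=(\ft^*)^{-\theta}\subseteq\ft^*$ (the $\theta$-eigenspace splitting dual to $\ft=\ft^{-\theta}\oplus\ft^\theta$) and use the commuting triangle of restriction maps
\[
\bC[\ft^*]^{\rW}\ \xleftarrow{\sim}\ \bC[\fg^*]^{G}\ \twoheadrightarrow\ \bC[\fp^*]^{K}\ \xrightarrow{\sim}\ \bC[\fa^*]^{\rW_X},
\]
whose outer arrows are the Chevalley and Kostant--Rallis restriction isomorphisms and whose composite $\bC[\ft^*]^{\rW}\to\bC[\fa^*]^{\rW_X}$ is restriction of functions along $\fa^*\hookrightarrow\ft^*$. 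The middle arrow is surjective by Kostant--Rallis \cite{KR} (the restriction of $G$-invariants on $\fg^*$ to $K$-invariants on $\fp^*$ is onto); hence $\bC[\fc]\to\bC[\fc_X]$ is surjective, i.e.\ $\fc_X\cong\fc_H\hookrightarrow\fc$ is a closed embedding.

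Apart from the two external facts --- the Weyl group of $G^{\theta_0}$ in (2), and the surjectivity $\bC[\fg^*]^{G}\twoheadrightarrow\bC[\fp^*]^{K}$ in (3) --- the argument is bookkeeping with normalisers and $\theta_0$-eigenspaces. I expect (3) to require the most care: one must check that the composite of the Chevalley and Kostant--Rallis restriction maps is genuinely the naive restriction along $\fa^*\hookrightarrow\ft^*$, so that the surjectivity applies directly.
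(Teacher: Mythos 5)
Your proof is correct, and takes a genuinely different route from the paper in parts (2) and (3); part (1) simply fills in the computation the paper compresses to ``follows from the formula \eqref{theta}.'' For (2), the paper cites \cite[Lemma 1.6]{L} for the well-known isomorphism $\rW_\fa \cong \rW^\theta$, verifies $\rW_H \cong \rW_\fa$ by inspecting the list in Example~\ref{list} (or trivially when $\theta_0 = \on{id}$, since then $M=K$), and only then \emph{deduces} $\rW_H \cong \rW^{\theta_0}$ from part (1) and the commuting square. You run the square in the opposite direction: you establish injectivity of all three arrows by explicit normalizer bookkeeping --- the observation that no root satisfies $\theta_0(\alpha)=-\alpha$ for a pinned $\theta_0$, so $Z_\fg(\fa)=\ft$, is exactly what makes $N_K(\fa)\subseteq N_G(T)$ and $N_H(T_H)\subseteq N_G(T)$ work --- and then you \emph{input} the classical surjectivity of $\rW_H\to\rW_G^{\theta_0}$ for a pinned automorphism with connected fixed-point subgroup (valid here since $H$ and $T_H$ are connected by \cite[Lemma 4.6]{Z}), deriving $\rW_\fa\cong\rW^\theta$ as an output rather than an input. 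This avoids the case check and the citation to \cite{L}, at the price of invoking a somewhat heavier structural fact about $G^{\theta_0}$. For (3), the paper simply cites \cite[Lemma 2.27]{HM}. Your derivation is structurally sound ($\fc_X=\fc_H$ from (2), then a closed embedding from surjectivity of restriction of invariants), but the single substantive input --- surjectivity of $\bC[\fg^*]^G\twoheadrightarrow\bC[\fp^*]^K$ --- is attributed to \cite{KR}, and that attribution should be double-checked: the Kostant--Rallis restriction theorem gives the isomorphism $\bC[\fp^*]^K\cong\bC[\fa^*]^{\rW_X}$, which is a different statement from the surjectivity of the restriction from $\fg^*$-invariants. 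That surjectivity is equivalent to the closed-embedding claim itself, and it is precisely what \cite[Lemma 2.27]{HM} supplies; either locate it precisely in \cite{KR} or cite \cite{HM} as the paper does.
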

\begin{proof}
(1) This follows from the formula~\eqref{theta}.

(2)   
 Since $\theta$ is quasi-split, the isomorphism $\rW_\fa\cong\rW^\theta$
well-known, see for example, \cite[Lemma 1.6]{L}. 
The isomorphism $\rW_H\cong\rW_\fa$
is obvious if $\theta_0=\on{id}$ since  $M=K$.
    If $\theta$ is non-trivial, then one
    verify the claim use the list in Example \ref{list}.
    Now the 
    identification $\rW_H\cong\rW^{\theta_0}$
    follows from (1) and the diagram~\eqref{Weyl groups}

(3) This is proved in \cite[Lemma 2.27]{HM}.
\end{proof}

Consider the Cartan decompositions 
 $\fg^*=\fk^*\oplus\fp^*$
and $\frak h^*=\frak m^*\oplus\frak q^*$.
We have embedding $\fa^*\subset\frak q^*\subset\fp^*$.
Let $\fg^{*,r}\subset\fg^*$ and $\fh^{*,r}\subset\fh^*$
be the open subset of 
regular elements.
Let $\fp^{*,r}\subset\fp^*$ be the open subset of $K$-regular elements, that is,
those elements $x\in\fp^*$
such that the co-adjoint orbit $K\cdot x$ has maximal dimension.
Similarly, we set $\frak q^{*,r}$ the set of $M$-regular elements in $\frak q^*$.
Let $\calN_{\fg^*}\subset\fg^*$ be the nilpotent element  in $\fg^*$. We write
$\calN_{\fp^*}=\fp^*\cap\calN$
and $\calN_{\fp^{*,r}}=\fp^{*,r}\cap\calN$.

\begin{lemma}\label{regular}
(1) There is an isomorphism $\fp^*//K\cong\fc_X$.

(2) Every semisimple element in $\fp^{*}$ (resp. $\frak q^*$) is $K$-conjugate (resp. $M$-conjugate) to an element in $\fa^*$.

(3) The co-adjoint action of 
$K$ on $\calN_{\fp^{*,r}}$ is transitive.

(4) We have $\frak q^{*,r}=\frak q^*\cap\frak p^{*,r}=\fq^*\cap\fh^{*,r}$.

(5) The set of irregular elements 
$\fp^*\setminus\fp^{*,r}$ is  of codimension $\geq2$.

(6) We have $\fh^{*,r}=\fh^*\cap\fg^{*,r}$,
$\fp^{*,r}=\fp^*\cap\fg^{*,r}$
and  $\frak q^{*,r}=\frak q^*\cap\fg^{*,r}$.
In particular, there is a 
Cartesian diagram \[\xymatrix{\fq^{*,r}\ar[r]\ar[d]&\fp^{*,r}\ar[d]\\
\fh^{*,r}\ar[r]&\fg^{*,r}}\]
\end{lemma}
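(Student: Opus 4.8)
**Proof proposal for Lemma 1.9 (the Kostant–Rallis facts about $\fp^*$, $\frak q^*$).**

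The plan is to prove the six assertions essentially in the order stated, leaning on the Kostant–Rallis paper \cite{KR} for the core invariant-theoretic input and using the "restriction to $\frak h$" Cartesian squares to pull the $H$-statements down from the $G$-statements. First, for (1), I would identify $\bC[\fp^*]^K$ with $\bC[\fa^*]^{\rW_X}$: the Chevalley-type restriction theorem of Kostant–Rallis says restriction along $\fa^*\hookrightarrow\fp^*$ induces an isomorphism $\bC[\fp^*]^K\xrightarrow{\sim}\bC[\fa^*]^{\rW_X}$, so $\fp^*//K\cong\fa^*//\rW_X=\fc_X$ by definition. (Here one uses that $K$, though possibly disconnected, acts through $K/K^\circ$ by an action already accounted for in the definition of the little Weyl group $\rW_X=N_K(\fa)/Z_K(\fa)$.) Assertion (2) — every semisimple element of $\fp^*$ is $K$-conjugate into $\fa^*$, and likewise $\frak q^*$ into $\fa^*$ under $M$ — is again a standard Kostant–Rallis result (maximal abelian subspaces of $\fp^*$ consisting of semisimple elements are all $K$-conjugate, and $\fa^*$ is one such), applied once to the pair $(G,K)$ and once to the split pair $(H,M)$; note $\fa^*\subset\frak q^*$ is the relevant maximal abelian subspace on the $H$-side by the discussion in Section 2.1.

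For (3), transitivity of $K$ on the open set $\calN_{\fp^{*,r}}$ of regular nilpotents in $\fp^*$ is precisely one of the main theorems of \cite{KR}: the regular nilpotent $K$-orbit in $\fp^*$ is unique and dense in $\calN_{\fp^*}$. Assertion (5), that $\fp^*\setminus\fp^{*,r}$ has codimension $\geq 2$, I would deduce the same way one does it for $\fg$: the irregular locus is cut out inside each fiber of $\fp^*\to\fc_X$ by the vanishing of a suitable Jacobian/discriminant, and using (2) plus the description of fibers one checks no irreducible component of the complement of $\fp^{*,r}$ is a divisor — alternatively cite \cite{KR} or the polar-representation literature directly, since $(\fp^*,K)$ is a polar representation with the codimension-$2$ property. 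Assertion (6) is the key structural input and the one I'd treat most carefully: I claim $\fh^{*,r}=\fh^*\cap\fg^{*,r}$, $\fp^{*,r}=\fp^*\cap\fg^{*,r}$, $\frak q^{*,r}=\frak q^*\cap\fg^{*,r}$. The inclusion "$\supseteq$" for each is the easy direction (a $\fg^*$-regular element has minimal $G$-centralizer, forcing its $K$- or $H$- or $M$-centralizer to be minimal as well, by a dimension count comparing $\dim Z_G$ with $\dim Z_K$ via $\theta$-eigenspace decompositions — this is exactly where one uses that $\theta$ is quasi-split so that the generic $K$-centralizer has the expected dimension). The reverse inclusion "$\subseteq$" is what requires \cite{KR} (or \cite{HM}): a $K$-regular semisimple element of $\fp^*$ lies in some $\fa^*$-conjugate and is $\fg^*$-regular iff its image in $\fc_X\cong\fc_H\hookrightarrow\fc$ (Lemma 1.7(3)) lands in the regular locus $\fc^{rs}$, and one shows $K$-regularity of semisimple elements corresponds exactly to landing in the image of $\fc_X\cap\fc^{rs}$ — combined with the codimension bound (5) and normality this propagates to all of $\fp^{*,r}$. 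Once the three identities of (6) hold, the Cartesian square
\[\xymatrix{\fq^{*,r}\ar[r]\ar[d]&\fp^{*,r}\ar[d]\\ \fh^{*,r}\ar[r]&\fg^{*,r}}\]
is immediate: $\fq^{*,r}=\frak q^*\cap\fg^{*,r}=(\frak q^*\cap\fp^*)\cap\fg^{*,r}=\frak q^*\cap\fp^{*,r}$ and also $=(\frak q^*\cap\fh^*)\cap\fg^{*,r}=\frak q^*\cap\fh^{*,r}$, so the fiber product $\fp^{*,r}\times_{\fg^{*,r}}\fh^{*,r}=\fp^{*,r}\cap\fh^{*,r}=(\fp^*\cap\fg^{*,r})\cap(\fh^*\cap\fg^{*,r})=(\fp^*\cap\fh^*)\cap\fg^{*,r}=\frak q^*\cap\fg^{*,r}=\frak q^{*,r}$, using $\fp^*\cap\fh^*=\frak q^*$.

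Finally, (4) falls out of the pieces already assembled: $\frak q^{*,r}$ was defined as the $M$-regular locus of $\frak q^*$, and $\frak q^*\cap\fp^{*,r}=\frak q^*\cap\fh^{*,r}$ by (6); so it remains to identify the $M$-regular locus of $\frak q^*$ with $\frak q^*\cap\fh^{*,r}$, i.e. with the $H$-regular-in-$\frak h^*$ locus intersected with $\frak q^*$. Since $(H,M)$ is a \emph{split} symmetric pair, this is the classical statement for split pairs (the $M$-regular elements of $\frak q^*$ are exactly the $\frak h^*$-regular ones lying in $\frak q^*$), which is again in \cite{KR}; alternatively one repeats the argument of (6) verbatim with $(G,K,\fp^*)$ replaced by $(H,M,\frak q^*)$. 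I expect the main obstacle to be the "$\subseteq$" direction of (6) — matching the two notions of regularity (minimal centralizer in $\fp^*$ under $K$ versus minimal centralizer in $\fg^*$ under $G$) on the nose rather than just generically — which is where the quasi-split hypothesis on $\theta$ and the full strength of Kostant–Rallis, together with Lemma 1.7(3), genuinely enter; the rest is bookkeeping with the Cartan decompositions and $\theta$-eigenspaces.
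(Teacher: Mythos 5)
Your proposal is broadly consonant with the paper in that both treat (1)--(3) as direct citations to Kostant--Rallis, but the logical scaffolding for (4) and (6) is reversed and this creates a genuine gap. The paper proves (4) directly from Kostant--Rallis (via the identification of $\frak h$ with the semisimple subalgebra $\tilde\fg$ of $\fg$ from~\cite[Prop.~23]{KR}, which the paper spells out in the proof of Lemma~\ref{section}), and then deduces (6) from (4) together with~\cite[Prop.~2.24]{HM}. You instead prove (6) first and read (4) off from it formally. That formal deduction of (4) from (6) is perfectly fine; the problem is that your argument for (6) is not a proof.

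Specifically, your ``easy direction'' $\fp^*\cap\fg^{*,r}\subseteq\fp^{*,r}$ is asserted via ``a $\fg^*$-regular element has minimal $G$-centralizer, forcing its $K$-centralizer to be minimal as well, by a dimension count.'' This does not follow: for a $\theta$-stable $x\in\fp^*$ the centralizer $Z_\fg(x)$ splits as $Z_\fk(x)\oplus Z_\fp(x)$, and having $\dim Z_\fg(x)$ minimal does \emph{not} constrain how that fixed total dimension distributes between the $\theta$-eigenspaces as $x$ varies, so $\dim Z_\fk(x)$ could a priori jump up for non-generic $\fg^*$-regular $x$. Controlling that split is exactly the content of the quasi-split hypothesis as used in~\cite[Prop.~2.24]{HM}, and it is a theorem, not a dimension count. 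Your ``hard direction'' ($\subseteq$) is likewise a sketch that conflates $\fg^*$-regular with $\fg^*$-regular-semisimple (``lands in $\fc^{rs}$''), which does not cover regular non-semisimple elements; the propagation from the semisimple locus via (5) and normality is a reasonable strategy but is not carried out, and the relevant reduction step is what~\cite{HM} actually supplies. Since your (4) rests on your (6), both inherit this gap. Two smaller points: the paper attributes (5) to~\cite[Lemma~6.31]{L}, not to~\cite{KR}; and you never invoke the identification $\frak h\cong\tilde\fg$ that the paper uses to get (4) from~\cite{KR} in the first place, which is the structural fact making the paper's ordering work.
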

\begin{proof}
    Part (1)-(4) are proved in \cite{KR}. Part (5)  is proved in \cite[Lemma 6.31]{L}
    and part (6) follows from (4) and 
   \cite[proposition 2.24]{HM}.
\end{proof}

\begin{remark}
    Part (1)-(5) are true for all symmetric pairs but 
    part (6) is only true for quasi-split symmetric pairs.
\end{remark}

Consider the 
Chevalley maps 
$\chi:\frak g^*\to\frak g^*//G\cong\fc$,
$\chi_H:\frak h^*\to\frak h^*//H\cong\fc_H$
and 
$\chi_X:\fp^*\to\fp^*//K\cong\fc_X$,
 $\chi_Y:\frak q^*\to\frak q^*//M\cong\fc_Y$.

\begin{lemma}\label{section}
    There exist $\mathfrak{sl}_2$-triple $\{h,e,f\}\subset\frak h^*$, $[h,e]=2e, [h,f]=-2f, [e,f]=h$, satisfying the following:
    
    (1) We have $h\in\fm^*$,
    $e,f\in\frak q^*$
    
    (2) We have $f+Z_{\fg^*}(e)\subset\fg^{*,r}$
    
    (3) We have $f+Z_{\fp^*}(e)=f+Z_{\frak q^*}(e)=f+Z_{\frak h^*}(e)\subset\frak q^{*,r}$
    
    (4) We have a commutative diagram of maps
    \begin{equation}
        \xymatrix{
        f+Z_{\frak h^*}(e)\ar[r]&\frak h^*\ar[r]^{\chi_H}&\fc_H\\
        f+Z_{\frak q^*}(e)\ar[r]\ar[d]^\simeq\ar[u]_\simeq&\frak q^*\ar[r]^{\chi_Y}\ar[d]\ar[u]&\fc_Y\ar[u]_\simeq\ar[d]^\simeq\\
        f+Z_{\frak p^*}(e)\ar[r]\ar[d]&\frak p^*\ar[r]^{\chi_X}\ar[d]&\fc_X\ar[d]\\
        f+Z_{\fg^*}(e)\ar[r]&\fg^*\ar[r]^\chi&\fc}
    \end{equation}
    where the non-label arrows are closed embeddings 
    and  the horizontal composted maps induce isomorphisms
\begin{equation}\label{KR sections}
    f+Z_{\frak g^*}(e)\cong\fc,
f+Z_{\frak h^*}(e)\cong\fc_H, 
f+Z_{\frak q^*}(e)\cong\fc_Y, f+Z_{\frak p^*}(e)\cong\fc_X.
\end{equation}

(5) Let $\phi:SL_2\to H$ be the map corresponding to the 
$\frak{sl}_2$-triple.  The space $f+Z_{\fg^*}(e)$ (resp. $f+Z_{\frak h^*}(e)= 
f+Z_{\frak q^*}(e)=f+Z_{\frak p^*}(e)$) is stable under the $\bG_m$-action 
$v\to t^2\on{Ad}_{\phi(t)}(v)$ and the 
isomorphisms in~\eqref{KR sections} are $\bG_m$-equivariant where 
$\bG_m$ acts on $\fc$ (resp. $\fc_X=\fc_Y=\fc_H$) be the 
square of the natural $\bG_m$-action.

   \end{lemma}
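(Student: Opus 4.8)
The plan is to build everything from a single Kostant section adapted to the symmetric pair $(H,M)$, which is split, and then transport the data back up along the chain of closed embeddings $\fq^{*,r}\hookrightarrow\fp^{*,r}$, $\fh^{*,r}\hookrightarrow\fg^{*,r}$ using the Cartesian squares of Lemma \ref{regular}(6). Concretely, I would first invoke the Kostant--Rallis theory for the split pair $(H,M)$: by Lemma \ref{regular} applied to $(H,M)$ (so with $\fh$ in place of $\fg$, $\fq$ in place of $\fp$), the nilpotent cone $\calN_{\fq^*}$ contains a dense open $M$-orbit of $M$-regular nilpotents. Pick $e$ in that orbit; by the $\mathfrak{sl}_2$-theory adapted to symmetric spaces (Kostant--Rallis \cite{KR}) one can complete $e$ to an $\mathfrak{sl}_2$-triple $\{h,e,f\}$ with $h\in\fm^*$ and $e,f\in\fq^*$, giving (1). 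Moreover since $e$ is $H$-regular nilpotent (by Lemma \ref{regular}(6), $\fq^{*,r}=\fq^*\cap\fh^{*,r}$, so an $M$-regular element of $\fq^*$ is $H$-regular in $\fh^*$), the affine slice $f+Z_{\fh^*}(e)$ is the usual Kostant section for $\fh^*$ and lands in $\fh^{*,r}$; this is (2) relative to $H$. The identification $f+Z_{\fh^*}(e)\cong\fc_H$ is the Kostant isomorphism for $\fh^*$, and since $\rW_H=\rW_Y$, $\fc_H=\fc_Y$.

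Next I would upgrade this to the full diagram in (4). The key algebraic input is the chain of equalities of centralizers $Z_{\fh^*}(e)=Z_{\fq^*}(e)\oplus Z_{\fm^*}(e)$ refined by the observation that, because $e\in\fq^*$ and the bracket respects the $\bZ/2$-grading $\fh^*=\fm^*\oplus\fq^*$, both $\ad e$ and $\ad f$ are odd; hence $f+Z_{\fh^*}(e)$ decomposes compatibly, and the slice through $f$ inside $\fq^*$ is exactly $f+(Z_{\fh^*}(e)\cap\fq^*)=f+Z_{\fq^*}(e)$. Then I claim $Z_{\fq^*}(e)=Z_{\fp^*}(e)=Z_{\fg^*}(e)\cap\fq^*$: the last equality is immediate; for the first, one uses that $e$ is $K$-regular in $\fp^*$ (Lemma \ref{regular}(6) again, now for $(G,K)$: $\fp^{*,r}=\fp^*\cap\fg^{*,r}$, and $e\in\fq^{*,r}\subset\fp^{*,r}$), so $\dim Z_{\fp^*}(e)=\dim\fa^*=\dim Z_{\fq^*}(e)$, forcing equality since one is contained in the other. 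This gives the vertical isomorphisms on the left column of the diagram. The horizontal arrows are then the restrictions of the Chevalley maps; that the composites $f+Z_{\fg^*}(e)\to\fc$ and $f+Z_{\fp^*}(e)\to\fc_X$ are isomorphisms is the Kostant section statement for $\fg^*$ (standard) and its symmetric-space analogue for $\fp^*$ (Kostant--Rallis), and compatibility with the closed embeddings $\fc_X\cong\fc_H\hookrightarrow\fc$ of Lemma \ref{equ}(3) follows because all four sections pass through the same element $f$, so the diagram commutes on the level of the sections themselves. Part (3) is then the chain of equalities $f+Z_{\fp^*}(e)=f+Z_{\fq^*}(e)=f+Z_{\fh^*}(e)$ just established, together with the fact that these all lie in $\fq^{*,r}$: each point of $f+Z_{\fh^*}(e)$ is $H$-regular, hence by Lemma \ref{regular}(4),(6) its intersection with $\fq^*$ is $\fq^{*,r}$.

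For part (5), I would use the contracting $\bG_m$-action built from the $\mathfrak{sl}_2$: let $\phi:SL_2\to H$ be the homomorphism integrating $\{h,e,f\}$, and consider $v\mapsto t^2\Ad_{\phi(t)}v$. Because $h\in\fm^*$, the torus $\phi(\bG_m)$ lies in $M$, hence preserves the grading $\fh^*=\fm^*\oplus\fq^*$ and likewise preserves $\fp^*$; and $\Ad_{\phi(t)}f=t^{-2}f$ exactly cancels the $t^2$ factor, so $f$ is fixed and the linear subspace $Z_{\fg^*}(e)$ is $\phi(\bG_m)$-stable, whence the whole slice $f+Z_{\fg^*}(e)$ (and its sub-slices) is stable. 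Under the Chevalley map this $\bG_m$-action intertwines with the square of the standard scaling on $\fc$ (and on $\fc_X=\fc_Y=\fc_H$) because the fundamental invariants are homogeneous and $\Ad_{\phi(t)}$ acts trivially on invariants — this is the usual Kostant grading argument. The $\bG_m$-equivariance of the isomorphisms in \eqref{KR sections} is then automatic from how they were constructed.

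The main obstacle I anticipate is not any single step but the \emph{uniformity} of the centralizer-dimension comparisons $Z_{\fq^*}(e)=Z_{\fp^*}(e)$ and the regularity propagation $\fq^{*,r}=\fp^{*,r}\cap\fq^*=\fh^{*,r}\cap\fq^*$: these hold precisely because the pair is quasi-split (cf. the Remark after Lemma \ref{regular}, which warns that Lemma \ref{regular}(6) fails otherwise), so the proof must genuinely feed through the quasi-split hypothesis via Lemma \ref{regular}(6) and \cite[Prop.\ 2.24]{HM} rather than through a general symmetric-space argument. Everything else — the existence of the $\mathfrak{sl}_2$-triple with $h\in\fm^*$, the Kostant-section isomorphisms, the $\bG_m$-contraction — is standard Kostant--Rallis input once that regularity bookkeeping is in place.
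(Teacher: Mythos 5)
Your proof takes a genuinely different route from the paper. The paper's argument hinges on one non-obvious identification: it shows that the quasi-split hypothesis makes $\fh$ coincide with the \emph{specific} semisimple subalgebra $\tilde\fg\subset\fg$ singled out in \cite[Proposition~23]{KR} (via the observation that the roots of $\fh$ with respect to $\fa$ are exactly the restricted roots $\alpha$ of $\fg$ for which $\alpha/2$ is not a root, verified case by case from Example~\ref{list}), and then reads off parts (3) and (4) directly from \cite[Theorems~11--13]{KR} applied to $\tilde\fg$. You never invoke the $\tilde\fg$ machinery; instead you build the slice from scratch for the split pair $(H,M)$ and then propagate the centralizer identities $Z_{\fq^*}(e)=Z_{\fh^*}(e)$ and $Z_{\fq^*}(e)=Z_{\fp^*}(e)$ upward by dimension counting (all three have dimension $\dim\fa^*$) combined with the $\bZ/2$-grading and the Cartesian regularity comparisons of Lemma~\ref{regular}(4),(6). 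Both methods genuinely feed through the quasi-split hypothesis, but at different points: the paper needs it to recognize $\fh=\tilde\fg$; you need it to make the ranks match. Your route is more elementary and shows more of the mechanism; the paper's is shorter because the three Kostant--Rallis theorems package the whole commutative diagram at once. Two small points in your write-up to tighten: the chain ``$Z_{\fq^*}(e)=Z_{\fp^*}(e)=Z_{\fg^*}(e)\cap\fq^*$: the last equality is immediate'' is garbled --- the immediate equality is $Z_{\fq^*}(e)=Z_{\fg^*}(e)\cap\fq^*$, while $Z_{\fp^*}(e)\subset\fq^*$ is what needs the dimension argument; and you state $Z_{\fh^*}(e)=Z_{\fm^*}(e)\oplus Z_{\fq^*}(e)$ without quite spelling out that $Z_{\fm^*}(e)=0$ (which follows since $\dim Z_{\fh^*}(e)=\operatorname{rk}\fh=\dim\fa^*=\dim Z_{\fq^*}(e)$ by splitness of $(H,M)$, forcing the $\fm^*$-summand to vanish). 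With those two gaps filled in, your proof is correct.
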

\begin{proof}
Pick $e\in\calN_{\fp^{*,r}}$.
Lemma \ref{regular} (6)
implies $e\in\fg^{*,r}$ and (1) and (2)
follow from
\cite[Proposition 4]{KR} and \cite{Kos}.
Note that the  roots  of $\frak h$ with respect to 
$\fa$ are given by those restricted roots 
$\alpha$ of 
$\fg$ with respect to $\fa$ such that $\alpha/2$ is not a root (the only non-trivial case is when $\theta_0\neq\on{id}$
and 
one can verify this using the list in Example \ref{list}.) It follows that $\frak h\subset\fg$ is equal to the 
semisimple subalgebra $\tilde\fg\subset\fg$ in \cite[Proposition 23]{KR}
and part (3) and (4) follows from
\cite[Theorem 11,12,13]{KR} and \cite{Kos}.

\end{proof}

\subsection{Regular centralizers}\label{involutions}
The isomorphism in Lemma \ref{section} (4) gives rise to sections of the Chevalley maps
$\kappa:\fc\to\fg^*$, $\kappa_X:\fc_X\to\fp^*$,
$\kappa_H:\fc_H\to\frak h^*$, known as the Kostant and 
Kostant-Rallis sections and the $\frak{sl}_2$-triple $\{h,e,f\}$
is called the principal normal $\frak{sl}_2$-triple. 

Let $I_G\to\fg^*$ (resp. $I_H\to\frak h^*$) be the group scheme of centralizers.
Let $I_X\to\fp^*$ be the group scheme of $K$-centralizers.

\begin{lemma}\label{J}
    (1)
   The restriction $I_G|_{\fg^{*,r}}$
   descends to a 
    smooth commutative group scheme
     $J_G\to\fc$
     along the map $\chi|_{\fg^{*,r}}:\fg^{*,r}\to\fc$.
    Moreover, the canonical isomorphism 
    $\chi^*J_G|_{\fg^{*,r}}\cong I_G|_{\fg^{*,r}}$ admits a canonical extension 
   $\chi^*J_G\to I_G$.
    The same is true for $I_H\to\fh^*$ and we denote by 
    $J_H\to\fc_H$ the corresponding group scheme.
    
    (2) The restriction 
    $I_X|_{\fp^{*,r}}$ descends to a smooth commutative group scheme
     $J_X\to\fc_X$  along the map $\chi_X|_{\fp^{*,r}}:\fp^{*,r}\to\fc_X$. Moreover, 
     the canonical isomorphism 
     $\chi_X^*J_X|_{\fp^{*,r}}\cong I_X|_{\fp^{*,r}}$ admits a canonical extension
    $\chi_X^*J_X\to I_X$. 

    (3) The Kostant section 
    $\kappa:\fc\to\fg^*$ 
   induces an isomorphism 
   $J\cong\kappa^*I_G$. 
   Consider the $\bG_m$-actin on 
   $G\times\fc$ (resp. $\fg^*$)
   given by $t\cdot(g,c)=(g\phi(t),t^{-2}c)$ (resp. $t\cdot v=t^{-2}v$).
   Here $\phi$ is the $SL_2$-triple in Lemma \ref{section} (5).
   Then we have a $\bG_m$-equivariant map
   $G\times\fc\to\fg^{*}$ sending $(g,v)\to\on{Ad}_g(\kappa(v))$. Moreover, it   
   factors through a $\bG_m$-equivaraint isomorphism $(G\times\fc)/J\cong\fg^{*,r}$,
    where $J_G$ acts on $G\times\fc$ via the embedding
    $J\cong\kappa^*I_G\subset G\times\fc$
    
    (4) 
    The Kostant-Rallis section 
    $\kappa_X:\fc_X\to\fp^*$ 
   induces an isomorphism 
   $J_X\cong\kappa_X^*I_X$.
   Consider the $\bG_m$-actin on 
   $K\times\fc_X$ (resp. $\fp^*$)
   given by $t\cdot(k,c)=(k\phi(t),t^{-2}c)$ (resp. $t\cdot v=t^{-2}v$).
  Then we have a $\bG_m$-equivariant map
 $K\times\fc_X\to\fp^{*}$ sending $(k,c)\to\on{Ad}_x(\kappa_X(v))$. Moreover, it factors through an isomorphism $(K\times\fc_X)/J_X\cong\fp^{*,r}$,
 where $J_X$ acts on $K\times\fc_X$ via the embedding
    $J_X\cong\kappa_X^*I_X\subset K\times\fc_X$

\end{lemma}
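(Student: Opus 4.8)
The plan is to run the standard Donagi--Gaitsgory--Kostant construction of regular centralizers for $G$, and then graft onto it the symmetric-space refinements furnished by Kostant--Rallis theory, i.e.\ Lemma \ref{regular} and Lemma \ref{section}. Everything lives over $\bC$, so fixed points of a smooth group scheme under a finite-order automorphism stay smooth, and a bijective submersion between smooth varieties is an isomorphism; I use both repeatedly. \emph{Parts (1) and (3)} are classical (Kostant \cite{Kos}, Donagi--Gaitsgory, Ng\^o): $\chi|_{\fg^{*,r}}\colon\fg^{*,r}\to\fc$ is smooth with connected fibres, $I_G|_{\fg^{*,r}}$ is commutative since centralizers of regular elements are abelian, and the Kostant section $\kappa$ meets every regular orbit, so $a\colon G\times\fc\to\fg^{*,r}$, $(g,c)\mapsto\on{Ad}_g\kappa(c)$ is surjective; it factors through an injection $(G\times\fc)/J_G\hookrightarrow\fg^{*,r}$ with $J_G:=\kappa^*I_G$ (the $G$-stabilizer along $\kappa$), and it is a submersion because $\kappa(\fc)=f+Z_{\fg^*}(e)$ is transverse to each regular orbit with complementary dimension; hence $\fg^{*,r}\cong(G\times\fc)/J_G$, so $J_G\cong\kappa^*I_G$, and pulling back through $\chi$ gives the canonical isomorphism $\chi^*J_G\cong I_G|_{\fg^{*,r}}$, which extends to a homomorphism $\chi^*J_G\to I_G$ because $\fg^*\setminus\fg^{*,r}$ has codimension $\geq 2$, $\fg^*$ is normal, and $I_G$ is affine over it. The $\bG_m$-equivariance in (3) comes from Lemma \ref{section}(5): the slice $f+Z_{\fg^*}(e)$ is stable under $v\mapsto t^2\on{Ad}_{\phi(t)}(v)$ and $\kappa$ intertwines this with the squared homothety on $\fc$, and the twisted action $t\cdot(g,c)=(g\phi(t),t^{-2}c)$ on $G\times\fc$ is rigged so that $a$ becomes equivariant for $t\cdot v=t^{-2}v$. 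Since $H=G^{\theta_0}$ is connected reductive, the identical argument produces $J_H\to\fc_H$.

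For \emph{part (2)}, note first that $\fp^{*,r}=\fp^*\cap\fg^{*,r}$ by Lemma \ref{regular}(6), and by Lemma \ref{equ}(3) the restriction $\chi|_{\fp^{*,r}}$ factors through the closed subscheme $\fc_X\hookrightarrow\fc$; denote the factored map $\chi_X|_{\fp^{*,r}}$. For any $v\in\fg^*$ one has $Z_G(-v)=Z_G(v)$, so $v\mapsto -v$ on $\fg^*$ lifts canonically to $I_G$ and, being $G$-equivariant, descends through $\chi$ to a canonical isomorphism of $J_G$ with its pullback under the induced involution of $\fc$. On $\fp^*$ we have $\theta=-\on{id}$, so $\theta$ and $v\mapsto-v$ agree there; composing the $\theta$-action on $I_G$ with this identification yields a genuine involution $\widetilde\theta$ of the commutative group scheme $I_G|_{\fp^{*,r}}\cong\chi_X^*(J_G|_{\fc_X})$ over $\fp^{*,r}$, and both constituents being $G$-equivariant, $\widetilde\theta$ descends to an involution of $J_G|_{\fc_X}$ over $\fc_X$. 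Since $Z_K(x)=Z_G(x)^{\theta}$ for $x\in\fp^*$ one has $I_X|_{\fp^{*,r}}=(I_G|_{\fp^{*,r}})^{\widetilde\theta}$, so $J_X:=(J_G|_{\fc_X})^{\widetilde\theta}$ -- smooth and commutative, as the fixed points of an involution in characteristic zero -- satisfies $\chi_X^*J_X\cong I_X|_{\fp^{*,r}}$ canonically, and restricting along $\kappa_X$ (which splits $\chi_X$) identifies $J_X$ with $\kappa_X^*I_X$. Since $\fp^*\setminus\fp^{*,r}$ has codimension $\geq 2$ by Lemma \ref{regular}(5) and $\fp^*$ is normal, this isomorphism extends to $\chi_X^*J_X\to I_X$. (Alternatively one can run the Donagi--Gaitsgory descent directly, using that $\chi_X|_{\fp^{*,r}}$ is faithfully flat by miracle flatness, its fibres being equidimensional unions of top-dimensional $K$-orbits.)

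\emph{Part (4)} follows the pattern of part (3). Consider $a_X\colon K\times\fc_X\to\fp^{*,r}$, $(k,c)\mapsto\on{Ad}_k\kappa_X(c)$; Lemma \ref{regular}(6) shows $\kappa_X(\fc_X)=f+Z_{\fp^*}(e)$ lands in $\fp^{*,r}$, and Lemma \ref{section}(5) makes $a_X$ equivariant for $t\cdot(k,c)=(k\phi(t),t^{-2}c)$ and $t\cdot v=t^{-2}v$. If $\on{Ad}_{k_1}\kappa_X(c_1)=\on{Ad}_{k_2}\kappa_X(c_2)$ then applying $\chi_X$ (which is $K$-invariant and split by $\kappa_X$) gives $c_1=c_2$, and then $k_2^{-1}k_1\in Z_K(\kappa_X(c_1))=J_X|_{c_1}$, so $a_X$ factors through an injection $(K\times\fc_X)/J_X\hookrightarrow\fp^{*,r}$; moreover $a_X$ is a submersion because, by \cite[Theorems~11--13]{KR} and Lemma \ref{section}, the Kostant--Rallis slice is transverse to the $K$-regular $K$-orbits with complementary dimension, i.e.\ $[\fk,\kappa_X(c)]+d\kappa_X(T_c\fc_X)=\fp^*$. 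It remains to show $a_X$ is \emph{surjective}, equivalently that each fibre of $\chi_X|_{\fp^{*,r}}$ is a single $K$-orbit; granting this, $(K\times\fc_X)/J_X\to\fp^{*,r}$ is a bijective submersion of smooth varieties, hence an isomorphism, and it is $\bG_m$-equivariant by the above. The identification $J_X\cong\kappa_X^*I_X$ is again built into the construction.

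The only non-formal step is the surjectivity of $a_X$: that the $K$-regular $K$-orbits in $\fp^*$ are exactly the fibres of $\chi_X\colon\fp^*\to\fc_X$. Its nilpotent instance is precisely Lemma \ref{regular}(3). In general, given a $K$-regular $x\in\fp^*$, Lemma \ref{regular}(6) makes $x$ regular in $\fg^*$, hence $G$-conjugate to $\kappa_X(\chi_X(x))$; the issue is to upgrade the conjugating element to one in $K$, which amounts to the vanishing of the component group of the relevant $\theta$-twisted centralizer and is where the hypotheses ``$G$ adjoint'' and ``$X$ quasi-split'' are used. I expect this to be extractable uniformly from Kostant--Rallis theory \cite{KR}, falling back if necessary on the short list of Example \ref{list}; everything else is formal -- the Donagi--Gaitsgory construction, extension of maps into affine schemes across codimension-$\geq 2$ loci of normal schemes, and ``bijective submersion implies isomorphism''.
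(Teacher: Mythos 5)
The paper's own proof of this lemma is essentially a citation: part (1) to Ng\^o \cite{N}, part (2) to \cite[Prop.~4.1]{HM}, parts (3) and (4) to Kostant \cite{Kos}, Kostant--Rallis \cite{KR} and Lemma \ref{section}(5). Your proposal reconstructs these arguments from scratch, and the overall architecture --- Donagi--Gaitsgory/Kostant descent for $J_G$, the involution-fixed-points construction for $J_X$, transversality of the Kostant--Rallis slice plus a single-orbit argument for the $(K\times\fc_X)/J_X\cong\fp^{*,r}$ claim --- is sound and matches how these results are actually proved in the literature.

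There are, however, two points worth flagging. First, your proof of part~(2) has a hidden dependency on part~(4): you construct $\widetilde\theta$ on $J_G|_{\fc_X}$ by descending the involution $\theta_I\colon(g,v)\mapsto(\theta(g),v)$ from $I_G|_{\fp^{*,r}}$ along $\chi_X|_{\fp^{*,r}}$, invoking equivariance. But $\theta_I$ is only $K$-equivariant (it intertwines $\on{Ad}_g$ with $\on{Ad}_{\theta(g)}$, so your ``both constituents being $G$-equivariant'' is not literally correct), and $K$-equivariance yields an fppf descent datum along $\chi_X|_{\fp^{*,r}}$ only once you know the fibres of $\chi_X|_{\fp^{*,r}}$ are single $K$-orbits --- which is exactly the surjectivity-of-$a_X$ statement you defer to part~(4). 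The paper avoids this by citing \cite{HM} for part~(2) independently and only afterwards (in Section~2.3) deducing the descended $\theta_J$; if you want the fixed-points route to be self-contained, a cleaner device is to define $\theta_J$ directly via the Kostant--Rallis section: by Lemma~\ref{section}(4) $\kappa_X$ is the restriction of $\kappa$, so $J_G|_{\fc_X}\cong\kappa_X^*I_G|_{f+Z_{\fp^*}(e)}$, on which $\theta_I$ visibly acts, and no descent along $\chi_X$ is needed. Second, you honestly flag that the surjectivity of $a_X$ --- each $K$-regular fibre of $\chi_X$ is a single $K$-orbit --- is left open; this is precisely where the adjointness of $G$ enters (cf.\ Remark~\ref{single orbit}), it is Lemma \ref{regular}(3) together with \cite[Theorems~11--13]{KR}, and the paper simply cites it. So the acknowledged gap is genuine but is the same citation the paper itself leans on.
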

\begin{proof}
  Part  (1) is proved in \cite{N} and 
 Part  (2) is proved in \cite[Proposition 4.1]{HM}.
 Part (3) and (4) follows from \cite[Theorem 9]{KR}, \cite{Kos}, and Lemma \ref{section} (5).
\end{proof}
\begin{remark}\label{single orbit}
    The assumption that $G$ is adjoint is essential for part (4). More precisely, the statement holds true as lone as 
    $K$-acts transitively on $\calN_{\fp^{*,r}}$. This is true if $G$ adjoint group but fails in geneal, for example, for the pair $(SL_2,SO_2)$, there are two $SO_2$-orbits in 
    $\calN_{\fp^{*,r}}$.
\end{remark}

Consider the base change $I_G|_{\fh^{*}}$
along the embedding $\fh^*\subset\fg^*$. 
We have a natural involution 
\[\theta_{0,I}:I_G|_{\fh^{*}}\to I_G|_{\fh^{*}}\ \ \theta_{0,I}(g,v)=(\theta_0(g),v)\]
with fixed points subgroup scheme
$(I_G|_{\fh^{*}})^{\theta_0}=I_H$.
It follows from 
Lemma \ref{regular} and Lemma \ref{J} that 
the restriction of above involution to the regular locus $\fh^{*,r}$
descends 
an involution 
\[\theta_{0,J}:J_G|_{\fc_H}\to J_G|_{\fc_H}\] with fixed points subgroup scheme
$(J_G|_{\fc_H})^{\theta_{0,J}}\cong J_H$.
Consider the norm map
\begin{equation}\label{norm}
    \on{Nm}:J_G|_{\fc_H}\to J_H\cong (J_G|_{\fc_H})^{\theta_{0,J}},\ \ \ \ \on{Nm}(g)=g\theta_{0,J}(g).
\end{equation}
and the anti-involution
\[\on{inv}\circ\theta_{0,J}:J_G|_{\fc_H}\to J_G|_{\fc_H}\ \ g\to \theta_{0,J}(g)^{-1}\]

\begin{lemma}\label{exact}
There is a short exact sequence of smooth affine group schemes
\[0\to(J_G|_{\fc_H})^{\on{inv}\circ\theta_{0,J}}\to J_G|_{\fc_H}\stackrel{\on{Nm}}\to J_H\to0.\]

  \end{lemma}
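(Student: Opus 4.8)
The plan is to separate the statement into a formal part (left- and middle-exactness, together with the identification of the first term) and the one substantive part (right-exactness, i.e.\ faithful flatness of $\on{Nm}$). For the formal part, note first that $J_G|_{\fc_H}$ is commutative by Lemma~\ref{J}(1), so $\theta_{0,J}$ and $\on{inv}$ are group automorphisms, $\on{inv}\circ\theta_{0,J}$ is an involutive automorphism, and $\on{Nm}$ is a homomorphism of group schemes, since $\on{Nm}(gh)=gh\,\theta_{0,J}(g)\theta_{0,J}(h)=\bigl(g\,\theta_{0,J}(g)\bigr)\bigl(h\,\theta_{0,J}(h)\bigr)$ by commutativity. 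Its image lands in $(J_G|_{\fc_H})^{\theta_{0,J}}\cong J_H$ because $\theta_{0,J}(\on{Nm}(g))=\theta_{0,J}(g)\,g=\on{Nm}(g)$, and on $R$-points for any test ring $R$ one has $g\in\ker\on{Nm}$ iff $\theta_{0,J}(g)=g^{-1}$, i.e.\ iff $(\on{inv}\circ\theta_{0,J})(g)=g$; hence $\ker\on{Nm}=(J_G|_{\fc_H})^{\on{inv}\circ\theta_{0,J}}$ as closed subgroup schemes. This subgroup scheme is smooth and affine, being the fixed-point scheme of an order-two automorphism of a smooth affine group scheme over a $\bQ$-scheme. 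It therefore remains only to show that $\on{Nm}\colon J_G|_{\fc_H}\to J_H$ is faithfully flat, which, all schemes in sight being affine over $\fc_H$, is exactly exactness of the sequence as fppf sheaves.

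For flatness I would argue fibrewise. At the identity section, $d\on{Nm}=\id+d\theta_{0,J}$ on $\Lie(J_G|_{\fc_H})$; decomposing into the $\pm 1$-eigenspaces $\mathfrak{j}^{\pm}$ of $d\theta_{0,J}$ (possible in characteristic zero), one has $\Lie J_H=\mathfrak{j}^{+}$ since $J_H$ is the fixed-point scheme of $\theta_{0,J}$, and $d\on{Nm}$ is twice the projection onto $\mathfrak{j}^{+}$, hence surjective onto $\Lie J_H$. Thus for every geometric point $c\in\fc_H$ the homomorphism $\on{Nm}_c\colon (J_G|_{\fc_H})_c\to (J_H)_c$ of smooth algebraic groups has surjective differential at the identity, so it is smooth; by the fibrewise flatness criterion together with the smoothness of $J_G|_{\fc_H}$ and $J_H$ over $\fc_H$, the map $\on{Nm}$ is flat. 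In particular $\on{Nm}$ is open, so $\on{image}(\on{Nm}_c)$ is an open — hence also closed — subgroup of $(J_H)_c$, i.e.\ a union of connected components.

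The remaining and only non-formal point is therefore that $\on{Nm}_c$ is surjective for every $c$, and this is where I expect the real work to lie. It reduces to connectedness of the fibres of $J_H\to\fc_H$: once $(J_H)_c$ is connected, the open subgroup $\on{image}(\on{Nm}_c)$ is all of $(J_H)_c$. Connectedness of $(J_H)_c$ is a standard property of regular centralizers for the reductive groups $H$ occurring here (cf.\ the discussion around Lemma~\ref{J} and the work of Ngô); alternatively one can verify faithful flatness directly by base change along $\fa^{*}\to\fc_H$, where over the regular semisimple locus $J_G|_{\fc_H}$ and $J_H$ become the families of maximal tori $T'$ and $T'^{\theta_0}$ and $\on{Nm}$ becomes the torus norm $t\mapsto t\,\theta_0(t)$, which is visibly faithfully flat, and then spreading this out over all of $\fc_H$ using the flatness established above together with the density of the regular semisimple locus in $\fc_H$ and the openness of $\on{image}(\on{Nm})$. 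Granting this input, $\on{Nm}$ is a faithfully flat homomorphism of smooth affine group schemes with kernel $(J_G|_{\fc_H})^{\on{inv}\circ\theta_{0,J}}$, which is precisely the asserted short exact sequence.
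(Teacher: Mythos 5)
Your proof is correct and follows essentially the same route as the paper's. Both identify the kernel by the same one-line computation, both prove smoothness of $(J_G|_{\fc_H})^{\on{inv}\circ\theta_{0,J}}$ by viewing it as the fixed-point scheme of a $\bZ/2$-action on a smooth affine group scheme in characteristic zero (the paper cites SGA3, Exp.\ XII, Cor.\ 9.8), and both reduce the remaining content to surjectivity of $\on{Nm}$, deduced from connectedness of the fibres of $J_H\to\fc_H$, which holds because $H$ is adjoint. The only difference is one of exposition: the paper compresses the surjectivity/faithful-flatness step into a citation to Richardson (Prop.\ 9.1), while you reconstruct the mechanism behind that citation — computing $d\on{Nm}=\id+d\theta_{0,J}$, decomposing into $\pm1$-eigenspaces, invoking fibrewise/miracle flatness, and then using openness of the image plus connectedness of $(J_H)_c$. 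That is a useful fleshing-out, and it also makes the faithful-flatness (as opposed to mere set-theoretic surjectivity) of $\on{Nm}$ explicit, but it is not a genuinely different approach. One small point of precision worth keeping in mind: you should say explicitly that $H$ is adjoint (rather than ``for the reductive groups $H$ occurring here''), since that is the hypothesis actually guaranteeing connectedness of the fibres of $J_H$.
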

\begin{proof}
The smoothness of $(J_G|_{\fc_H})^{\on{inv}\circ\theta_{0,J}}$ follows from \cite[Corollary 9.8]{Gr}.
    It is clear that $(J_G|_{\fc_H})^{\on{inv}\circ\theta_{0,J}}$ is the kernel of 
    $\on{Nm}$ thus it remains to show that $\on{Nm}$ is surjective.
    For this we observe that $H$ is adjoint and hence $J_H$ has connected fibers 
    and the desired claim follows from \cite[Proposition 9.1]{R}.
\end{proof}

\begin{example}
Assume $\theta_0=\on{id}$. Then 
$G=H$ and 
the norm map is the square map $\on{Nm}=[2]:J_G\to J_G, g\to g^2$, 
$J_G^{\on{inv}\circ\theta_0}=J_G[2]$ is the order two elements in $J_G$, and the exact sequence above becomes
\[0\to J_G[2]\to J_G\stackrel{[2]}\to J_G\to 0.\]
\end{example}

We shall relate the two group schemes $J_X$ and the kernel $(J_G|_{\fc_H})^{\on{inv}\circ\theta_{0,J}}=\on{Ker}(\on{Nm}:J_G|_{\fc_H}\to H_H)$.
Consider the base change $I_G|_{\fp^{*}}$
along the embedding $\fp^*\subset\fg^*$. 
We have a natural involution 
\[\theta_{I}:I_G|_{\fp^{*}}\to I_G|_{\fp^{*}}\ \ \theta_{I}(g,v)=(\theta(g),v)\]
with fixed points subgroup scheme
$(I_G|_{\fp^{*}})^{\theta_I}=I_X$.
It follows from Lemma \ref{regular} and Lemma \ref{J} that 
the restriction of above involution to the regular locus $\fp^{*,r}$
descends 
an involution 
\[\theta_{J}:J_G|_{\fc_X}\to J_G|_{\fc_X}\] with fixed points subgroup scheme
$(J_G|_{\fc_X})^{\theta_{J}}\cong J_X$.
We have the following key observation
relating the two subgroup schemes
\begin{equation}\label{embeddings}
   J_X\cong J_G|_{\fc_X}^{\theta_J}\longrightarrow J_G|_{\fc_X=\fc_H}\longleftarrow  (J_G|_{\fc_H})^{\on{inv}\circ\theta_{0,J}}
\end{equation}

\begin{lemma}\label{exact}
The two subgroup schemes $J_X$ and $(J_G|_{\fc_H})^{\on{inv}\circ\theta_{0,J}}$ of  
$J_G|_{\fc_X=\fc_H}$  are equal 
  \[J_X|_{\fc_X^{rs}}=(J_G|_{\fc_H})^{\on{inv}\circ\theta_{0,J}}|_{\fc_H^{rs}}\]
  over the open subset 
  $\fc_X^{rs}=\fc_H^{rs}$.
  \end{lemma}
\begin{proof}
It follows from 
 Lemma \ref{regular} (6) and 
Lemma \ref{section} (4) that 
the restriction of the involutions $\theta_{0,I}$ and $\theta_{I}$ on
 $J_G|_{\fq^{*,r}=\fh^{*,r}\cap\fp^{*,r}}$ 
 descend to $\theta_{0,J}$ and $\theta_J$
 along the map $\chi_Y|_{\fq^{*,r}}:\fq^{*,r}\to\fc_Y=\fc_X=\fc_H$.
Thus it suffices to show that 
 $\on{inv}\circ\theta_{0,I}=\theta_{I}$ on
$I_G|_{\fq^{*,rs}}$.
By Lemma \ref{regular} (2), every element
in $\fq^{*,rs}$ is $M$-conjugate to an element
in $\fa^{*,rs}$, we reduce to check that 
 $\on{inv}\circ\theta_{0,I}=\theta_{I}$ on 
 $I_G|_{\fa^{*,rs}}$.
 Note that $I_G|_{\fa^{rs}}=T\times\fa^{*,rs}$
 and the desired equality follows from~\eqref{theta}.

\end{proof}

\subsection{Cotangent bundles of symmetric spaces as affine closures}\label{contangent bundle}
Let $T^*X=T^*(G/K)$ be the cotangent bundle of $X$ with moment map $\mu_X:T^*X\to\fg^*$.
Under the natural isomorphism 
$T^*X\cong G\times^K\fp^*$, the moment map 
is given by $\mu_X(g,v)\to \on{Ad}_g(v)$.
We shall give a description of $T^*X$ in terms of 
the regular centralizers group schemes $J_G$ and $J_H$.

Consider the quotient 
\[(G\times J_H)/J_G|_{\fc_H}=((G\times\fc_H)\times_{\fc_H} J_H)/J_G|_{\fc_H}\]
where $J_G|_{\fc_H}$ acts on $J_H$ via the norm map
$\on{Nm}:J_G|_{\fc_H}\to J_H$
~\eqref{norm}  and on $G\times\fc_H$
via the embedding $J_G\to \fc\times G$ given by the Kostant section 
$\kappa_G$
(see Lemma \ref{J} (3)).
We have a natural map 
\[\mu^\circ_G:(G\times J_H)/J_G|_{\fc_H}\to\fg^*\]
given by $(g,c,y)\to \on{Ad}_g\kappa_H(c)$.
Denote by 
\[\overline{(G\times J_H)/J_G|_{\fc_H})}=\on{Spec}(\bC[G\times J_H]^{J_G|_{\fc_H}})\]
the affine closure of $(G\times J_H)/J_G|_{\fc_H})$. The map $\mu^\circ_G$ extends to a map 
\[\mu_G:\overline{(G\times J_H)/J_G|_{\fc_H})}\to\fg^*\]

\begin{prop}\label{Wh=T^*X}
There is a horizontal isomorphism of $G$-varieties
\[\xymatrix{\overline{(G\times J_H)/J_G|_{\fc_H})}\ar[rr]^{\ \ \ \cong}\ar[dr]_{\mu_G}&&T^*X\ar[ld]^{\mu_X}\\
    &\fg^*&}\]

\end{prop}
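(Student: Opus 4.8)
The plan is to exhibit the isomorphism over the regular semisimple locus first, where everything is a torsor computation, and then argue that both sides are normal affine varieties (more precisely, both are the affine closures of the corresponding open subvarieties) so that an isomorphism over a big open set extends uniquely. Concretely, let $\fc_X^{rs}=\fc_H^{rs}\subset\fc_X=\fc_H$ be the regular semisimple locus and let $\fp^{*,rs}=\chi_X^{-1}(\fc_X^{rs})$, $\fg^{*,rs}=\chi^{-1}(\fc^{rs})$. On $\fp^{*,rs}$ the $K$-action has centralizer exactly $I_X|_{\fp^{*,rs}}\cong\chi_X^*J_X|_{\fc_X^{rs}}$, and by Lemma~\ref{exact} this equals $\chi_X^*(J_G|_{\fc_H})^{\on{inv}\circ\theta_{0,J}}|_{\fc_H^{rs}}$, i.e. the kernel of the norm map $\on{Nm}:J_G|_{\fc_H}\to J_H$ over the $rs$-locus. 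Therefore, over $\fc_H^{rs}$,
\[
T^*X|_{\fc_X^{rs}}\;=\;G\times^K\fp^{*,rs}\;\cong\;(G\times\fp^{*,rs})/I_X\;\cong\;(G\times\fc_X^{rs})\times^{J_G|_{\fc_H}}\bigl(J_H/\!\!\on{nothing}\bigr),
\]
where I am using Lemma~\ref{regular}(2) (every semisimple element of $\fp^*$ is $K$-conjugate into $\fa^*$) together with the Kostant--Rallis section $\kappa_X:\fc_X\to\fp^*$ of Lemma~\ref{J}(4) to trivialize $\fp^{*,rs}$ as $(K\times\fc_X^{rs})/J_X$. Quotienting the equality $J_X|_{\fc_X^{rs}}=\on{Ker}(\on{Nm})|_{\fc_H^{rs}}$ inside $J_G|_{\fc_H}$, we get $(G\times\fc_H^{rs})/J_X\cong (G\times J_H)/J_G|_{\fc_H}$ over $\fc_H^{rs}$, which is exactly the open subvariety $((G\times J_H)/J_G|_{\fc_H})|_{\fc_H^{rs}}$ in the statement; chasing through, the map $\mu_G$ becomes $\mu_X$.

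The next step is to upgrade this to an isomorphism over all of $\fc_H$, or at least over a locus whose complement has codimension $\ge 2$ in both sides, and then invoke normality. For $T^*X$: the complement of $\fp^{*,r}$ in $\fp^*$ has codimension $\ge 2$ by Lemma~\ref{regular}(5), so $T^*X\cong G\times^K\fp^*$ contains $G\times^K\fp^{*,r}$ with complement of codimension $\ge 2$, and $G\times^K\fp^{*,r}$ is in turn an $I_X|_{\fp^{*,r}}$-quotient that descends via $\chi_X|_{\fp^{*,r}}$ to a $J_X$-bundle over $\fc_X$ twisted by the Kostant--Rallis section. Concretely $G\times^K\fp^{*,r}\cong (G\times\fc_X)/J_X$ by Lemma~\ref{J}(4). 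On the other side, $(G\times J_H)/J_G|_{\fc_H}$ already maps to $(G\times\fc_H)/J_G|_{\fc_H}$, and the key point is that over the \emph{regular} locus $\fc_H$ (not just $rs$) one still has, via the Kostant section $\kappa_H$ and Lemma~\ref{J}(1)--(2), a clean description of $J_G|_{\fc_H}$ and $J_H$ as genuine group schemes with $\on{Nm}$ surjective (Lemma~\ref{exact}), whose kernel is the smooth group scheme $(J_G|_{\fc_H})^{\on{inv}\circ\theta_{0,J}}$. So I want to identify $J_X$ with $(J_G|_{\fc_H})^{\on{inv}\circ\theta_{0,J}}$ over the whole regular locus, not merely over $rs$. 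This requires re-examining the proof of Lemma~\ref{exact}: the reduction there to $\fa^{*,rs}$ uses Lemma~\ref{regular}(2), which only covers semisimple orbits; I would instead use the Kostant--Rallis section picture of Lemma~\ref{section}(4), which gives compatible sections $f+Z_{\fq^*}(e)\cong\fc_Y=\fc_H$ and $f+Z_{\fp^*}(e)\cong\fc_X$ landing in $\fq^{*,r}$, and on these sections the two involutions $\theta_{0,I}$ and $\theta_I$ restricted to centralizers are computed directly from $\theta=-\theta_0$ on $\ft$ (Lemma~\ref{equ}(1)) and the formula~\eqref{theta}. That upgrades $J_X=(J_G|_{\fc_H})^{\on{inv}\circ\theta_{0,J}}$ over all of $\fc_H$, hence $(G\times\fc_H)/J_X\cong (G\times J_H)/J_G|_{\fc_H}$ over all of $\fc_H$, i.e. $G\times^K\fp^{*,r}\cong (G\times J_H)/J_G|_{\fc_H}$ compatibly with the maps to $\fg^*$.

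Finally I would take affine closures. The variety $T^*X$ is smooth, hence normal, and it is the affine closure of $G\times^K\fp^{*,r}$ precisely because the complement has codimension $\ge 2$ (Lemma~\ref{regular}(5)) and $T^*X$ is affine, so $\bC[T^*X]=\bC[G\times^K\fp^{*,r}]$. By construction $\overline{(G\times J_H)/J_G|_{\fc_H}}=\Spec\bC[G\times J_H]^{J_G|_{\fc_H}}$ is the affine closure of $(G\times J_H)/J_G|_{\fc_H}$. The isomorphism $G\times^K\fp^{*,r}\cong (G\times J_H)/J_G|_{\fc_H}$ of the previous step therefore induces an isomorphism of coordinate rings, hence of affine closures, compatibly with the maps to $\fg^*$ since those maps agree on the dense open subvariety. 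This gives the commuting triangle in the statement. The main obstacle I anticipate is the second step — promoting the comparison of $J_X$ with $\on{Ker}(\on{Nm})$ from the regular semisimple locus to the full regular locus, since Lemma~\ref{exact} as stated only gives equality over $\fc^{rs}$; handling this cleanly requires the $\frak{sl}_2$-triple/section machinery of Lemma~\ref{section} and a careful check that the two descended involutions $\theta_J$ and $\on{inv}\circ\theta_{0,J}$ literally coincide on $J_G|_{\fc_H}$, not just generically. Everything else (the torsor bookkeeping, normality, codimension estimates) is routine given the lemmas already established.
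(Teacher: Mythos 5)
Your overall strategy matches the paper's: reduce to a comparison of $J_X$ with the kernel of the norm map inside $J_G|_{\fc_H}$, then use the codimension-two property of $\fp^*\setminus\fp^{*,r}$ together with the description $(G\times\fc_X)/J_X\cong G\times^K\fp^{*,r}$ to pass to affine closures. However, you have missed the key step that makes the paper's argument clean, and your proposed fix for the step you correctly flag as ``the main obstacle'' does not work as sketched.

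The paper does \emph{not} try to promote the equality $J_X = (J_G|_{\fc_H})^{\on{inv}\circ\theta_{0,J}}$ from the regular semisimple locus $\fc_H^{rs}$ to all of $\fc_H$. Instead it uses Lemma~\ref{generic}: if two flat closed affine subgroup schemes of a flat affine group scheme over $\bbA^n$ agree over the generic point, then they produce the same invariant ring. Combined with the first Lemma~\ref{exact} (the short exact sequence, giving $\bC[G\times J_H]^{J_G|_{\fc_H}}\cong\bC[G\times\fc_H]^{(J_G|_{\fc_H})^{\on{inv}\circ\theta_{0,J}}}$) and the second Lemma~\ref{exact} (generic agreement), this yields $\bC[G\times\fc_H]^{(J_G|_{\fc_H})^{\on{inv}\circ\theta_{0,J}}}\cong\bC[G\times\fc_X]^{J_X}$ directly, with no need for any statement over the full regular locus. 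Your proposal instead attempts to re-prove Lemma~\ref{exact} over all of $\fc_H$ by evaluating the involutions $\theta_{0,I}$ and $\theta_I$ on the centralizers of Kostant--Rallis section points via the formula $\theta=-\theta_0$ on $\ft$; but that formula only gives a computation when the centralizer sits inside $T$, which holds at regular \emph{semisimple} points of the section and fails at the other points (where the centralizer is not a torus). So this step, as written, does not close the gap. One could salvage it by a separatedness argument — both $\theta_J$ and $\on{inv}\circ\theta_{0,J}$ are algebraic automorphisms of the separated group scheme $J_G|_{\fc_H}$ over $\fc_H$, so agreeing on a dense open forces agreement everywhere — but that is a different argument from the one you sketch, and in any case the paper avoids needing it by working with invariant rings via Lemma~\ref{generic}. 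Once you add Lemma~\ref{generic} (or the separatedness observation), the rest of your torsor bookkeeping, codimension estimate, and affine-closure step are all fine and coincide with the paper's.
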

\begin{proof}
It follows from
Lemma \ref{exact} that
\begin{equation}\label{1}
   \bC[G\times J_H]^{J_G|_{\fc_H}}\cong\bC[G\times\fc_H]^{(J_G|_{\fc_H})^{\on{inv}\circ\theta_{0,J}}} 
\end{equation}
We claim that there is an isomorphism
\begin{equation}\label{2}
\bC[G\times\fc_H]^{(J_G|_{\fc_H})^{\on{inv}\circ\theta_{0,J}}}\cong\bC[G\times\fc_X]^{J_X}
\end{equation}
Indeed, by Lemma \ref{exact}, the two
smooth affine schemes $(J_G|_{\fc_H})^{\on{inv}\circ\theta_{0,J}}$
and $J_X$, viewed as subgroup schemes 
$J_G|_{\fc_H=\fc_X}$, are identical over the generic fiber and hence the desired isomorphism follows from Lemma \ref{generic} below. 
Now the isomorphism 
$K\times\fc_X/J_X\cong\fp^{*,r}$ in Lemma \ref{J} and the 
the codimension two property 
$\on{codim}(\fp^*\setminus\fp^{*,r})\geq 2$
in Lemma \ref{regular} (5) 
implies 
\[\bC[G\times J_H]^{J_G|_{\fc_H}}\stackrel{~\eqref{1}}\cong\bC[G\times\fc_H]^{(J_G|_{\fc_H})^{\on{inv}\circ\theta_{0,J}}}\stackrel{~\eqref{2}}\cong
\bC[G\times\fc_X/J_X]\cong
\bC[G\times^K(K\times\fc_X/J_X)]\cong\]
\[\cong\bC[G\times^K\fp^{*,r}]\cong
\bC[G\times^K\fp^{*}]\cong\bC[T^*X].\]
It follows from the construction that corresponding
isomorphism $\overline{(G\times J_H)/J_G|_{\fc_H})}\cong T^*X$ 
is compatible with the $G$-action and the maps 
$\mu_X$ and $\mu_H$.

\end{proof}

\begin{lemma}\label{generic}
Let $\calG\to \mathbb A^n$ be a flat affine group scheme over the affine $n$-space.
Let $\calH_1,\calH_2$ be two flat  affine group subschemes of $\calG$.
Assume $\calH_1=\calH_2$ over the generic point $\eta$ of $\mathbb A^n$.
Then we have 
\[\bC[\calG]^{\calH_1}=\bC[\calG]^{\calH_2}.\]
\end{lemma}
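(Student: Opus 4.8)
The plan is to show that the formation of $\calH_i$-invariant functions on $\calG$ is insensitive to the base, by comparing it with the same formation over the generic fiber, where the hypothesis applies directly. Write $R=\bC[\mathbb{A}^n]$ --- a polynomial ring, hence an integral domain --- let $K$ be its fraction field and $\eta=\Spec K$ its generic point, and set $A=\bC[\calG]$, $B_i=\bC[\calH_i]$ for $i=1,2$; by assumption $A$ and the $B_i$ are flat, hence torsion-free, as $R$-modules. By definition $\bC[\calG]^{\calH_i}$ is the equalizer in $A$ of the two maps $A\rightrightarrows A\otimes_R B_i$ given by the coaction $\rho_i$ of the translation action of $\calH_i$ on $\calG$ and by $a\mapsto a\otimes 1$. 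The first point is that $A\otimes_R B_i$, being a tensor product of flat $R$-modules, is itself flat and hence torsion-free over the domain $R$, so the localization map
\[
A\otimes_R B_i \;\hookrightarrow\; (A\otimes_R B_i)\otimes_R K \;=\; A_K\otimes_K B_{i,K}
\]
is injective, where $A_K=A\otimes_R K=\bC[\calG_\eta]$ and $B_{i,K}=B_i\otimes_R K=\bC[\calH_{i,\eta}]$.

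Using this injectivity, for $a\in A$ the identity $\rho_i(a)=a\otimes 1$ holds in $A\otimes_R B_i$ if and only if it holds after applying $-\otimes_R K$, i.e.\ if and only if the image of $a$ in $A_K$ is $\calH_{i,\eta}$-invariant. Since $A$ is torsion-free it embeds into $A_K$, and the previous sentence shows that, as subrings of $A_K$,
\[
\bC[\calG]^{\calH_i}\;=\;A\,\cap\,\bigl(\bC[\calG_\eta]\bigr)^{\calH_{i,\eta}}.
\]
Now the hypothesis that $\calH_1=\calH_2$ over $\eta$ says exactly that $\calH_{1,\eta}$ and $\calH_{2,\eta}$ are the same closed subgroup scheme of $\calG_\eta$, so the two translation actions on $\calG_\eta$ coincide and therefore $\bigl(\bC[\calG_\eta]\bigr)^{\calH_{1,\eta}}=\bigl(\bC[\calG_\eta]\bigr)^{\calH_{2,\eta}}$; intersecting with $A$ yields $\bC[\calG]^{\calH_1}=\bC[\calG]^{\calH_2}$.

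In the intended application to Proposition \ref{Wh=T^*X} one takes $\calG=G\times\fc_H$ with $\calH_1=(J_G|_{\fc_H})^{\on{inv}\circ\theta_{0,J}}$ and $\calH_2=J_X$ as subgroup schemes of it; both are smooth, in particular flat, over $\fc_H\cong\mathbb{A}^n$, and they coincide over the dense open $\fc_H^{rs}$ by Lemma \ref{exact}, hence over $\eta$. The only step that needs genuine care, and the only place where flatness is used, is the displayed injectivity $A\otimes_R B_i\hookrightarrow A_K\otimes_K B_{i,K}$: flatness of $\calH_i$ over $\mathbb{A}^n$ rules out $R$-torsion in $A\otimes_R B_i$, and without it an invariance relation could hold generically without holding integrally, so the conclusion would break.
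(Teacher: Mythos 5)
Your proof is correct and takes essentially the same route as the paper: both reduce to the generic fiber by observing that flatness of the group schemes forces $\bC[\calG\times_{\mathbb A^n}\calH_i]$ to be torsion-free over $\bC[\mathbb A^n]$, so the coaction identity can be checked after localizing to the generic point, where the two subgroup schemes coincide. You are somewhat more explicit than the paper about where flatness enters and you package the reduction via the clean identity $\bC[\calG]^{\calH_i}=A\cap\bC[\calG_\eta]^{\calH_{i,\eta}}$, but the substance is identical; the only small wobble is your closing sentence, which mentions only flatness of $\calH_i$, whereas (as you correctly noted earlier) flatness of $\calG$ is also used to embed $A$ and $A\otimes_R B_i$ into their localizations.
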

\begin{proof}
   It suffices to show that and $\calH_1$-invariant function $f\in\bC[\calG]^{\calH_1}$
   is also $\calH_2$-invariant.
   Let $a^*_i:\bC[\calG]\to\bC[\calG\times_{\mathbb A^n}\calH_i]$ be the co-action map.
   We have $a_1^*f-f\otimes 1=0$.
   We need to show
   $a_2^*f-f\otimes 1=0$.
   Since the localization map 
   $\bC[\calG\times_{\mathbb A^n}\calH_2]\to\bC[\calG_\eta\times_\eta\calH_{2,\eta}]$ is injective, we reduce to check that $(a_2^*f-f\otimes 1)_\eta=0$
   on $\calG_\eta\times_\eta\calH_{2,\eta}$.
   Now the desired claim follows from the 
assumption that $a^*_1=a^*_2$ over $\eta$
and hence $(a_2^*f-f\otimes 1)_\eta=(a_1^*f-f\otimes 1)_\eta=0$.
\end{proof}

Proposition \ref{Wh=T^*X}  implies that there is a 
$J_H$-action on $T^*X\cong\overline{(G\times J_H)/J_G|_{\fc_H}}$ induced by the multiplication of $J_H$.
Consider the quotient
\[(T^*X\times H)/J_H\cong (T^*X\times_{\fc_H}(\fc_H\times H))/J_H\]
where $T^*X\to T^*X/G\cong\fc_X\cong\fc_H$ is the quotient map.
We have natural map $\mu_{X,H}^\circ:\overline{(T^*X\times H)/J_H}\to\fg^*\times\fc^*$
given by
$\mu_{X,H}(x,c,h)\to (\mu_X(x),\on{Ad}_h(\kappa_H(c))$.
Denote by 
\[\overline{(T^*X\times H)/J_H}=\on{Spec}(\bC[(T^*X\times H)^{J_H})\]
the affine closure. Then $\mu^\circ_{X,H}$ extends to a map 
$\mu_{X,H}:\overline{(T^*X\times H)/J_H}\to\fg^*\times\fh^*$.
Consider the quotient 
\[(G\times H\times\fc_H)/J_G|_{\fc_H}=((G\times\fc_H)\times_{\fc_H} (\fc_H\times H))/J_G|_{\fc_H}\]
where $J_G|_{\fc_H}$ acts on $\fc_H\times H$ via the composed map
$J_G|_{\fc_H}\to J_H\to\fc_H\times H$
 and on $G\times\fc_H$
as before.
We have a natural map 
\[\mu^\circ_{G,H}:(G\times H\times \fc_H)/J_G|_{\fc_H}\to\fg^*\times\fh^*\]
given by $(g,h,c)\to (\on{Ad}_g\kappa_H(c),\on{Ad}_h\kappa_H(c))$.
Denote by 
\[\overline{(G\times H\times\fc_H)/J_G|_{\fc_H})}=\on{Spec}(\bC[G\times H\times\fc_H]^{J_G|_{\fc_H}})\]
the affine closure. The map $\mu^\circ_{G,H}$ extends to a map 
\[\mu_{G,H}:\overline{(G\times H\times\fc)/J_G|_{\fc_H})}\to\fg^*\times\fh^*\]

\begin{corollary}
    \label{kernel}
   There is a horizontal isomorphism of $G\times H$-varieties
\[\xymatrix{\overline{(G\times H\times\fc_H)/J_G|_{\fc_H})}\ar[rr]^{\ \ \ \cong}\ar[dr]_{\mu_{G,H}}&&\overline{(T^*X\times H)/J_H}\ar[ld]^{\mu_{X,H}}\\
    &\fg^*\times\fh^*&}\]   
\end{corollary}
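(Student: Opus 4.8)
The statement is a formal consequence of Proposition \ref{Wh=T^*X}, proved by a ``descent in stages'' argument on coordinate rings: one substitutes the isomorphism $T^*X\cong\overline{(G\times J_H)/J_G|_{\fc_H}}$ into the definition of $\overline{(T^*X\times H)/J_H}$ and reorganizes the iterated quotient, carrying out the $J_H$-quotient before the $J_G|_{\fc_H}$-one. Concretely, Proposition \ref{Wh=T^*X} gives $\bC[T^*X]\cong\bC[G\times J_H]^{J_G|_{\fc_H}}$, under which the $J_H$-action on $T^*X$ entering $\overline{(T^*X\times H)/J_H}$ is translation on the $J_H$-factor of $G\times J_H$. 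Since $J_G|_{\fc_H}$ acts trivially on $H$ and tensoring over $\bC$ is exact, forming $J_G|_{\fc_H}$-invariants commutes with $\otimes\,\bC[H]$, so
\[
\bC[T^*X\times H]^{J_H}\;\cong\;\bigl(\bC[G\times J_H]^{J_G|_{\fc_H}}\otimes\bC[H]\bigr)^{J_H}\;\cong\;\bC[G\times J_H\times H]^{J_G|_{\fc_H}\times J_H},
\]
where $J_G|_{\fc_H}$ acts on $G\times J_H$ as in Proposition \ref{Wh=T^*X} (on $G$ through the Kostant section, on $J_H$ through the norm map) and trivially on $H$, while $J_H$ acts trivially on $G$, by translation on the $J_H$-factor, and by translation through the canonical embedding $\iota\colon J_H\hookrightarrow\fc_H\times H$ on $H$. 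These two actions commute, as they overlap only on the commutative group scheme $J_H$.

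Now take $J_H$-invariants first. Since $J_H$ acts trivially on $G$ and freely, by translation, on the $J_H$-factor, the map $(g,y,h)\mapsto(g,\pi(y),h\,\iota(y)^{-1})$, with $\pi\colon J_H\to\fc_H$ the structure map, identifies $(G\times J_H\times H)/J_H$ with $G\times\fc_H\times H$; in particular this quotient is already affine, so no affine closure is introduced here. Tracing the residual $J_G|_{\fc_H}$-action through this identification, $\gamma\in J_G|_{\fc_H}$ acts on $G$ as before and on $\fc_H\times H$ by translation through $J_G|_{\fc_H}\xrightarrow{\Nm}J_H\xrightarrow{\iota}\fc_H\times H$; comparing with the definition of $(G\times H\times\fc_H)/J_G|_{\fc_H}$ preceding the statement, this is exactly the prescribed action. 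Hence
\[
\bC[G\times J_H\times H]^{J_G|_{\fc_H}\times J_H}\;\cong\;\bC[G\times\fc_H\times H]^{J_G|_{\fc_H}}\;=\;\bC[G\times H\times\fc_H]^{J_G|_{\fc_H}},
\]
and applying $\Spec$ produces the asserted isomorphism $\overline{(T^*X\times H)/J_H}\cong\overline{(G\times H\times\fc_H)/J_G|_{\fc_H}}$.

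It remains to check $G\times H$-equivariance and compatibility with the moment maps. On both sides $G$ acts by left translation on the $G$-factor (through $T^*X\cong\overline{(G\times J_H)/J_G|_{\fc_H}}$ on the left) and $H$ by left translation on the $H$-factor, and with the conventions above these commute with the quotients performed, so the isomorphism is $G\times H$-equivariant. For the moment maps, Proposition \ref{Wh=T^*X} identifies $\mu_X$ with $(g,c,y)\mapsto\on{Ad}_g\kappa_H(c)$, so under the identification above a representative $(g,c,y,h)$ has $\fh^*$-component $\on{Ad}_{h\,\iota(y)^{-1}}\kappa_H(c)=\on{Ad}_h\kappa_H(c)$, since $\iota(y)$ centralizes $\kappa_H(c)$; thus $\mu_{X,H}$ and $\mu_{G,H}$ both read $(\on{Ad}_g\kappa_H(c),\on{Ad}_h\kappa_H(c))$ on representatives and agree. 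The only genuine point is the bookkeeping in the middle step, namely checking that eliminating $J_H$ converts the residual $J_G|_{\fc_H}$-action on the $H$-factor into the action through the norm map $\Nm\colon J_G|_{\fc_H}\to J_H$ used in the definition of $\mu^\circ_{G,H}$; once the translation conventions are fixed so that all the actions in sight commute, the commutativity claims and the moment-map compatibility above are routine.
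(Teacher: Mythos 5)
Your argument is correct, but it goes by a genuinely different route than the paper's. The paper works outward from $Q=(G\times H\times\fc_H)/J_G|_{\fc_H}$: it base-changes $\mu_{G,H}$ along the Kostant section $\kappa_H:\fc_H\to\fh^*$ to get $Q\times_{\fh^*}\fc_H\cong(G\times J_H)/J_G|_{\fc_H}$, invokes Proposition~\ref{Wh=T^*X} to identify the resulting coordinate ring with $\bC[T^*X\times_{\fc_H}(\fc_H\times H)]$, and then recovers $\bC[Q]$ from this via the Kostant--Whittaker descent isomorphism of \cite[Proposition 2.3]{Ga}. You instead work inward from $\overline{(T^*X\times H)/J_H}$: substituting Proposition~\ref{Wh=T^*X} turns $\bC[T^*X\times H]^{J_H}$ into a double invariant ring $\bC[G\times J_H\times H]^{J_G|_{\fc_H}\times J_H}$, and you then perform the $J_H$-invariants first, exploiting that $J_H$ acts freely on the $J_H$-factor so that $(G\times J_H\times H)/J_H\cong G\times\fc_H\times H$ is already affine, and checking (using commutativity of $J_H$) that the residual $J_G|_{\fc_H}$-action is exactly the one in the definition of $Q$. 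This is a more elementary argument that sidesteps the Kostant--Whittaker descent machinery, at the cost of the explicit bookkeeping of the gauge-fixing map and its equivariance; the paper's route hides that bookkeeping inside \cite{Ga} and gets the moment-map compatibility essentially for free from functoriality, whereas you verify it by hand. One very minor notational slip: $\iota(y)^{-1}$ should be read as the inverse of the $H$-component of $\iota(y)\in\fc_H\times H$, and which side $J_H$ translates on in $G\times J_H$ versus $H$ needs to be chosen consistently for the gauge-fixing map $(g,y,h)\mapsto(g,\pi(y),h\,\iota(y)^{-1})$ to be literally $J_H$-invariant; with the right convention (the one forced by the descent of $\mu_{X,H}$) your formula is correct, but it is worth stating the convention explicitly.
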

\begin{proof}
  Write $Q=(G\times H\times\fc_H)/J_G|_{\fc_H})$
  and $Q\times_{\fh^*}\fc_H$ be base change of 
  $\mu_{G,H}:Q\to\fg^*\times\fh^*\stackrel{pr}\to\fh^*$
  along the Kostant section $\kappa_H:\fc_H\to\fh^*$.
  Since the image of $\mu_{G,H}$ lands in $\fg^{*,r}\times\fh^{*,r}$, the Kostant-Whittaker descent isomorphism \cite[Proposition 2.3.]{Ga} implies
    \[\bC[Q]\cong\bC[(Q\times_{\fh^*}\fc_H)\times_{\fc_H}(\fc_H\times H)]^{J_H}\]
    On the other hand, 
    $Q\times_{\fh^*}\fc_H\cong (G\times J_H)/J_G|_{\fc_H}$
    and 
    Proposition \ref{Wh=T^*X} implies 
    that 
    \[\bC[(Q\times_{\fh^*}\fc_H)\times_{\fc_H}(\fc_H\times H)]\cong\bC[T^*X\times_{\fc_H}(\fc_H\times H)].\]
Taking $J_H$-invariants, we obtain the desired isomorphism 
\[\bC[Q]\cong \bC[(Q\times_{\fh^*}\fc_H)\times_{\fc_H}(\fc_H\times H)]^{J_H}\cong \bC[T^*X\times_{\fc_H}(\fc_H\times H)]^{J_H}\cong\bC[T^*X\times H/J_H].\]
Compatibility between $\mu_{G,H}$ and $\mu_{X,H}$ follows from the construction.

\end{proof}

\section{Twisted derived Satake equivalence}\label{twisted Satake}
\subsection{Twisted loop groups and their affine Grassmannians}
Let $\check G$ be the simply connected complex dual group of $G$.
Let $F=\bC((t))$ (resp. $F'=\bC((t^2))$) and $\calO=\bC[[t]]$ (resp. $\calO'=\bC[[t^2]]$)
be the Laurent series and power series ring  over $\bC$ with formal variable $t$ (resp. $t^2$).
Denote by
$\check G(F)$ and $\check G(\calO)$
the loop group and arc groups, and 
 $\Gr=\check G(F)/\check G(\calO)$
the affine Grassmannian.
We fix a Borel pair $(\check B,\check T)$
and let $(X^\bullet(\check T),\check\Delta,X_\bullet(\check T),\Delta)$
be the associated root datum, $X^\bullet(\check T)^+$
and $X_\bullet(\check T)^+$ the sets of dominant weights and dominant coweights,
$\rW_{\check G}=N_{\check G}(\check T)/\check T$ the Weyl group, and 
$\check I\subset\check G(\calO)$ the Iwahoric subgroup. It it known that the 
$\check G(\calO)$-orbits (resp. $\check I$-orbits)
on $\Gr$ are parametrized by 
$X_\bullet(\check T)^+$ (resp. $X_\bullet(\check T)$)

Fix a pinning $(\check G,\check B,\check T,\check x)$ of $\check G$ and a pinned involution $\check\theta_0\in\on{Aut}(\check G,\check B,\check T,\check x)$.
Denote by $\check H=\check G^{\check\theta_0}$ 
the symmetric subgroup (Note that $\check H$ is not the dual group of the symmetric subgroup $G^{\theta_0}$ of $G$).
Since
$\check G$ is simply connected, Steinberg's theorem implies 
$\check H$ is connected.
The intersections $(\check B_{\check H}=\check B\cap\check H,\check T_{\check H}=\check T\cap\check H)$ is a Borel pair of $\check H$ and we denote by 
$\rW_{\check H}=N_{\check H}(\check T_{\check H})/\check T_{\check H}$ the Weyl group.

Consider the involution  
$\check\theta: \check G(F)\to \check G(F)$ sending 
$\gamma=\gamma(t)\in \check G(F)$ to 
\begin{equation}\label{ct}
  \check\theta(\gamma)(t)=\check\theta_0(\gamma(-t)).  
\end{equation}
Both $\check G(\calO)$ and  $\check I$ are stable under $\check\theta$ and 
we  denote by 
$\check G(F)^{\check\theta}$,
$\check G(\calO)^{\check\theta}$, 
and $\check I^{\theta}\subset \check G(\calO)^{\check\theta}$ the 
twisted loop group, twisted arc group, and twisted
Iwahori subgroup respectively.
We denote by 
\[\Gr^{\check\theta}=\check G(F)^{\check\theta}/\check G(\calO)^{\check\theta}\]
the quotient known as the twisted affine Grassmannian.
\begin{example}\label{list 2}
(1) If $\check\theta_0$ is trivial then 
$\ct(\gamma)(t)=\gamma(-t)$ and hence
$\check G=\check H$ and $\check G(F)^{\ct}=\check G(F')=\check G(\bC((t^2)))$.

(2)
Here is a list of the the triple
$(\check G,\check H,\check G(F)^{\ct})$ for non-trivial $\check\theta_0$ :
\[(SL_{2n+1},SO_{2n+1}, SU_{2n+1}(F')),\  (SL_{2n},Sp_{2n},SU_{2n}(F')),\  (Spin_{2n},Spin_{2n-1}, Spin_{2n}^{(2)}(F'))\] 
\[(E_6^{sc},F_4, E^{sc,(2)}_6(F')).\]
Here $Spin_{2n}^{(2)}(F')$ and $E_6^{sc,(2)}(F')$ are the quasi-split but non-split forms of 
$Spin_{2n}(F')$ and $E^{sc}_6(F')$.
In particular, we see that $\check T_{\check H}$ contains regular semisimple elements of $\check G$.
\end{example}

We recall some basic facts about 
twisted affine Grassmannian and twisted Satake equivalence in
\cite{PR,Z}.
Since $\check G$ is simply connected 
$\Gr^{\ct}$ is connected (\cite[Theorem 1.1]{PR}).
The involution $\check\theta_0$ acts naturally 
on $X_\bullet(\check T)$ and we denote by
$X_\bullet(\check T)_{\check\theta}$ the 
co-invariant and $X_\bullet(\check T)^+_{\check\theta}$ the image of 
$X_\bullet(\check T)^+$ along the natural quotient map 
$X_\bullet(\check T)\to X_\bullet(\check T)_{\check\theta}$. Note that the simply-connectedness of $\check G$ implies that 
$X_\bullet(\check T)_{\check\theta}$ is torsion free.
The Kottwitz homomorphism 
$\check T(F)\to X_\bullet(\check T)_{\check\theta}$
induces an isomorphism
\begin{equation}\label{Kottwitz}
    X_\bullet(\check T)_{\check\theta}\cong \check T(F)^{\check\theta}/T(\calO)^{\check\theta}\ \ \ \ \ [\mu]\to s_{[\mu]}=t^{\mu+\check\theta_0(\mu)}\cdot\check T(\calO)^{\check\theta}
\end{equation}
where  $\mu\in X_\bullet(\check T)$ 
is  a representative
of $[\mu]\in X_\bullet(\check T)_{\check\theta}$.
It is known that the 
assignment 
sending $[\mu]$
to  $\check G(\calO)^{\check\theta}\cdot s_{[\mu]}$ (resp.  $\check I^{\check\theta}\cdot s_{[\mu]}$)
induces a bijection between 
$X_\bullet(\check T)_{\check\theta}^+$ (resp. $X_\bullet(\check T)_{\check\theta}$)
and 
$\check G(\calO)^{\check\theta}$-orbits (resp. $\check I^{\check\theta}$-orbits). 
Here we regard $s_{[\mu]}$
as an element in $\Gr^{\check\theta}$ via the  natural inclusion 
$\check T(F)^{\check\theta}/T(\calO)^{\check\theta}\subset\Gr^{\check\theta}$.

Let $\on{Perv}_{\check G(\calO)^{\ct}}(\Gr^{\ct})$ be the abelian category of $\check G(\calO)^{\check\theta}$-equivariant perverse sheaves on $\Gr^{\check\theta}$. 
In \cite[Theorem 0.1]{Z}, the author proved that 
$\on{Perv}_{\check G(\calO)^{\ct}}(\Gr^{\ct})$ is naturally a Tannakian tensor category with tensor product given by convolution product 
$\star$
 and fiber functor 
 given by hypercohomology 
 $\on{H}^*=\on{H}^*(\Gr^{\check\theta},-):\on{Perv}_{\check G(\calO)^{\check\theta}}(\Gr^{\check\theta})\to\on{Vect}$. In addition, there is a horizontal 
 braided tensor equivalence
 \begin{equation}\label{Sat}
     \xymatrix{(\on{Perv}_{\check G(\calO)^{\ct}}(\Gr^{\ct}),\star)\ar[rr]^{\Phi}_\simeq\ar[dr]_{\on{H}^*}&&(\on{Rep}(H),\otimes)\ar[ld]^{\on{For}}\\
 &\on{Vect}&}
 \end{equation}
where $(\on{Rep}(H),\otimes)$ is the Tannakian tensor category of finite dimensional 
complex representations of $H$.
There is a canonical bijection
\begin{equation}\label{A}
    X^\bullet(T_H)\cong X_\bullet(\check T)_{\check\theta}\ \ \ \ \ \lambda\leftrightarrow [\mu]
\end{equation}
sending 
the set of dominant weights $X^\bullet(T_H)^+$
(with respect to the Borel pair $(B_H,T_H)$)
to $X_\bullet(\check T)_{\check\theta}^+$, such that 
\[\Phi(\IC_{[\mu]})\cong V_\lambda\]
where $\IC_{[\mu]}$ is the $\IC$-complex on
the closure of the spherical orbit $\check G(\calO)^{\check\theta}\cdot s_{[\mu]}$ 
and $V_\lambda$ irreducible representation 
of $H$ with highest weight $\lambda$.

The regular representation $\bC[H]$ of 
$H$ gives rise to an ind perverse sheaf 
on $\IC_{\bC[H]}\in\on{Ind}(\on{Perv}_{\check G(\calO)^{\ct}}(\Gr^{\ct}))$ 
to be called the regular perverse sheaf.

\subsection{Equivariant homology of twisted affine Grassmannians}
The geometric Satake equivalence for 
$\check G$ gives rise to pinning 
$(G,B,T,x)$ of $G$, see \cite[Section 4]{Z}.
Consider the pinned involution $\theta_0\in\on{Aut}(G,T,B,x)$ given by the image of $\check\theta_0\in\on{Aut}(\check G,\check T,\check B,\check x)$
under the isomorphism
\[\on{Aut}(\check G,\check T,\check B,\check x)\cong\on{Out}(\check G)\cong\on{Out}(G)\cong \on{Aut}(G,T,B,x)\]
Note that we are in the setup in Section \ref{symemtric pair} and we will follow the notation in \emph{loc. cit.} and denote by 
$H=G^{\theta_0}$ be the fixed points subgroup and $(B_H=B^{\theta_0},T_H=T^{\theta_0})$ the induced Borel pair.

We denote by $\check\fg, \check\fb, \check\fh$, etc, the Lie algebras and 
$\check\fc=\check\ft//\rW_{\check G}$
and 
$\check\fc_{\check H}=\check\ft_{\check H}//\rW_{\check H}$. 
It follows from Lemma \ref{equ} that 
the canonical isomorphisms
$\check\ft\cong\ft^*$ and $\rW_{\check G}\cong\rW_G$ sends $\check\theta_0$
to $\theta_0$ and the subgroup 
$\rW_{\check H}\subset\rW_{\check G}^{\check\theta_0}$
to the subgroup $\rW_H\subset\rW_G^{\theta_0}$. 
Moreover, the canonical isomorphism 
$\check\fc\cong\check\ft//\rW_{\check G}\cong\ft//\rW_G\cong\fc$
restricts to isomorphisms
$\check\fc_{\check H}\cong\fc_H$.
We remark that the equality 
$\rW_H=\rW^{\theta_0}_G$ in Lemma \ref{equ} (2)
implies 
$\rW_{\check H}=\rW^{\check\theta_0}_{\check G}$.

Consider the equivariant homology 
$\on{H}_*^{\check T}(\Gr)=\on{H}^*_{\check T}(\Gr,\omega)$ (resp.
$\on{H}_*^{\check T_{\check H}}(\Gr^{\check\theta})=\on{H}^*_{\check T_{\check H}}(\Gr^{\check\theta},\omega)$)
and $\on{H}_*^{\check G}(\Gr)=\on{H}^*_{\check G}(\Gr,\omega)$ (resp.
$\on{H}_*^{\check H}(\Gr^{\check\theta})=\on{H}^*_{\check H}(\Gr^{\check\theta},\omega)$)
where $\omega$ is the dualizing sheaf (see, e.g.,  (see, e.g., \cite[Section 2.6]{YZ})).
 Since $\Gr$
 (resp. $\Gr^{\check\theta}$) admits a 
 $\check T$-stable  (resp.
 $\check T_{\check H}$-stable ) stratification with 
 affine space strata given by 
$\check I$-orbits
 (resp.
 $\check I^{\check\theta}$-orbits), the equivariant homology
 $\on{H}_*^{\check T}(\Gr)$ 
 (resp. $\on{H}_*^{\check T_{\check H}}(\Gr^{\check\theta})$ )
 is a free $\bC[\check\ft]\cong\on{H}^*_{\check T}(pt)$-module (resp. a free $\bC[\check\ft_{\check G}]\cong\on{H}^*_{\check T_{\check H}}(pt)$-module). 
It follows that 
$\on{H}_*^{\check G}(\Gr)\cong
\on{H}_*^{\check T}(\Gr)^{\rW_{\check G}}$
(resp. $
\on{H}_*^{\check H}(\Gr^{\check\theta})\cong
\on{H}_*^{\check T_{\check H}}(\Gr^{\check\theta})^{\rW_{\check H}}$ )
 is a free over $\bC[\check\fc]\cong\bC[\check\ft]^{\rW_{\check G}}\cong\on{H}_{\check G}^*(pt)$ (resp.  over $\bC[\check\fc_{\check H}]\cong\bC[\check\ft_{\check H}]^{\rW_{\check H}}\cong\on{H}_{\check H}^*(pt)$).

We shall recall the construction of a Hopf algebra structure on
$\on{H}_*^{\check T}(\Gr)$ (resp. $\on{H}_*^{\check T_{\check H}}(\Gr^{\check\theta})$).
Pick a real conjugation $\check\eta_0$ of $\check G$ such that 
$\check\eta_c:=\check\eta_0\check\theta_0=\check\theta_0\check\eta_0$ and the fixed points $\check G_c=\check G^{\check\eta_0}$
is compact.
Let $\check T_c=\check T\cap\check G_c$,
$\check H_c=\check G_c\cap\check H$, $\check T_{\check H_c}=\check T\cap \check H_c$.
Let $\Omega \check G_c\subset\check G(F)$
be the subgroup of base polynomial map 
$(S^1,1)\to (\check G_c,e)$. 
Then we have the Gram-Schmidt factorization
$\check G(F)\cong\Omega G_c\times \check G(\calO)$ and a $\check G_c$-equivariant homeomorphism 
$\Gr\cong\Omega G_c$. 
Taking $\check\theta$-fixed points, we obtain a  
factorization of the twisted loop group
$\check G(F)^{\check\theta}\cong(\Omega\check G_c)^{\check\theta}\times\check G(\calO)^{\check\theta}$
and a $\check K_c$-eqivariant homeomorphism $\Gr^{\check\theta}\cong(\Omega \check G_c)^{\check\theta}$.
There is a graded Hopf algebra structure on 
$\on{H}_*^{\check T_c}((\Omega \check G_c)^{\check\theta})$ (resp. $\on{H}_*^{\check T_{\check H_c}}((\Omega \check G_c)^{\check\theta})$)
over $\bC[\check\ft]$ (resp. over $\bC[\check\ft_{\check H}]$)
with co-product and product 
given by 
 push-forward along the diagonal embedding 
 $\Delta:\Omega G_c\to \Omega G_c\times \Omega G_c$ and 
 and the group multiplication 
 on $m:\Omega G_c\times \Omega G_c\to \Omega G_c$ of $\Omega G_c$ (resp. $(\Omega G_c)^{\check\theta}$)
and we can transport the Hopf algebra structure to $\on{H}_*^{\check T}(\Gr)$ (resp. $\on{H}_*^{\check T_{\check H}}(\Gr^{\check\theta})$) via 
the above equivariant homeomorphisms.
The same construction endow $\on{H}_*^{\check G}(\Gr)$ (resp. $\on{H}_*^{\check H}(\Gr^{\check\theta})$) a graded Hopf algebra structure over $\bC[\check\fc]$ (resp. $\bC[\check\fc_{\check H}]$) 

Consider the embedding 
$\check\rho:\Gr^{\check\theta}\cong(\Omega G_c)^{\check\theta}\to\Gr\cong\Omega G_c$. 
Since $\check\rho$ is a group homomorphism,
it gives rise to a Hopf algebra
map
\begin{equation}\label{rho_*}
   \rho_*^{\check T_{\check H}}:\on{H}_*^{\check T_{\check H}}(\Gr^{\check\theta})\to\on{H}_*^{\check T_{\check H}}(\Gr)\cong\on{H}_*^{\check T}(\Gr)\otimes_{\bC[\check\ft]}\bC[\check\ft_{\check H}] 
\end{equation}
\begin{lemma}\label{commutativity}
(1) The Hopf algebra map $\rho_*^{\check T_{\check H}}$~\eqref{rho_*} is injective.

(2) The Hopf algebra  $\on{H}_*^{\check T}(\Gr)$ 
  (resp. $\on{H}_*^{\check T_{\check H}}(\Gr^{\check\theta})$) is commutative
  and cocommutative.

\end{lemma}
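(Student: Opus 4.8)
The plan is to deduce (1) from the localization theorem in equivariant Borel--Moore homology, and then to obtain the commutativity in (2) from (1), cocommutativity being formal for both.

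For (1), since $\on{H}_*^{\check T_{\check H}}(\Gr^{\ct})$ is free over $\bC[\check\ft_{\check H}]$ it is torsion free, so it embeds into its localization $Q\otimes_{\bC[\check\ft_{\check H}]}\on{H}_*^{\check T_{\check H}}(\Gr^{\ct})$, where $Q$ inverts the nonzero restrictions to $\check\ft_{\check H}$ of the roots of $\check G$, and likewise for $\Gr$. By the localization theorem these localizations are computed, via proper pushforward, from the $\check T_{\check H}$-fixed loci, so the first point to settle is that those fixed loci are discrete. On $\Gr^{\ct}$ it is the set $\{s_{[\mu]}:[\mu]\in X_\bullet(\check T)_{\check\theta}\}$ recalled above; on $\Gr$ one uses that $\check T_{\check H}$ contains a $\check G$-regular semisimple element $s$ (Example \ref{list 2}), which acts on every Iwahori cell $\cong\bbA^{n}$ of $\Gr$ with weights of the form $\alpha(s)$, $\alpha$ a root, all $\neq 1$, so that $(\Gr)^{\check T}\subseteq(\Gr)^{\check T_{\check H}}\subseteq(\Gr)^{s}=(\Gr)^{\check T}=\{t^{\nu}:\nu\in X_\bullet(\check T)\}$ is discrete as well. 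The closed embedding $\check\rho$ sends $s_{[\mu]}$ to $t^{\mu+\check\theta_0(\mu)}$, and $[\mu]\mapsto\mu+\check\theta_0(\mu)$ is injective: if $\mu+\check\theta_0\mu=\mu'+\check\theta_0\mu'$ then $\mu-\mu'\in\ker(1+\check\theta_0)$, which rationally equals $\on{im}(1-\check\theta_0)$, so $[\mu]-[\mu']$ is torsion, hence zero in the torsion free group $X_\bullet(\check T)_{\check\theta}$. Therefore $\check\rho$ restricts to an injection of discrete fixed loci, inducing the split inclusion $\bigoplus_{[\mu]}\bC[\check\ft_{\check H}]\hookrightarrow\bigoplus_{\nu}\bC[\check\ft_{\check H}]$ on Borel--Moore homologies, which stays injective after localizing; functoriality of proper pushforward along $\check\rho$ and along the two fixed-point inclusions produces a commutative square relating $\rho_*^{\check T_{\check H}}$ to this last map through the two pushforwards from fixed loci (isomorphisms after localization), and since the localization map for $\Gr^{\ct}$ is injective, a chase of this square forces $\rho_*^{\check T_{\check H}}$ to be injective.

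For (2): cocommutativity is formal. Each coproduct is proper pushforward $\Delta_*$ along a diagonal $\Delta$ with $\tau\circ\Delta=\Delta$ for the swap $\tau$; the equivariant Künneth isomorphism intertwines $\tau_*$ with the graded swap, and since $\on{H}_*^{\check T}(\Gr)$ and $\on{H}_*^{\check T_{\check H}}(\Gr^{\ct})$ are concentrated in even degrees, $\Delta_*$ is cocommutative. For commutativity of $\on{H}_*^{\check T}(\Gr)\cong\on{H}_*^{\check T_c}(\Omega\check G_c)$ I would invoke Eckmann--Hilton: on $\Omega\check G_c$ the pointwise multiplication $m$ coming from the group law of $\check G_c$ and loop concatenation satisfy the interchange law, share the constant loop as a unit, and are both $\check T_c$-equivariant, so $m$ is homotopy commutative and the Pontryagin product it induces on equivariant homology is commutative. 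Rather than checking whether loop concatenation restricts to $(\Omega\check G_c)^{\ct}$, I would instead deduce commutativity of $\on{H}_*^{\check T_{\check H}}(\Gr^{\ct})$ from (1): the map $\rho_*^{\check T_{\check H}}$ is an injective ring homomorphism into $\on{H}_*^{\check T}(\Gr)\otimes_{\bC[\check\ft]}\bC[\check\ft_{\check H}]$, which is commutative as a base change of the commutative ring $\on{H}_*^{\check T}(\Gr)$, and a subring of a commutative ring is commutative.

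The step I expect to be the main obstacle is making the localization argument of (1) rigorous for these ind-schemes: identifying the $\check T_{\check H}$-fixed locus of $\Gr$ precisely enough to know it is discrete (not merely contained in $(\Gr)^{\check T}$ as topological spaces) and, relatedly, checking that proper pushforward along the inclusion of fixed loci is injective on Borel--Moore homology --- which is exactly why discreteness of the fixed loci is indispensable. The remaining ingredients --- the Eckmann--Hilton argument, cocommutativity, and the passage from (1) to commutativity on the twisted side --- are formal.
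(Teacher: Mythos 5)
Your proof follows essentially the same approach as the paper: part (1) is established by the localization theorem on discrete $\check T_{\check H}$-fixed loci (identified using that $\check T_{\check H}$ contains a regular semisimple element of $\check G$), together with the injectivity of $[\mu]\mapsto\mu+\check\theta_0(\mu)$ and flatness over $\bC[\check\ft_{\check H}]$; part (2) for $\Gr$ uses that $\Omega\check G_c$ is a homotopy-commutative $\on{H}$-space, and the twisted case is deduced via the injection of (1). The only minor deviation is that you argue cocommutativity directly in both cases (even degrees plus $\tau\circ\Delta=\Delta$), whereas the paper deduces both commutativity and cocommutativity of $\on{H}_*^{\check T_{\check H}}(\Gr^{\ct})$ from (1); both routes are fine, and the details you add (discreteness of the fixed loci via root weights, injectivity of the map of fixed loci via torsion-freeness of $X_\bullet(\check T)_{\check\theta}$) are exactly the things the paper cites or leaves implicit.
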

\begin{proof}
  Proof of (1). 
  Since $\check T_{\check H}$ contains regular element of $\check T$ (see Example \ref{list 2}),  the $\check T_{\check H}$-fixed points on $\Gr$ 
  and $\Gr^{\check\theta}$ are given by 
   $(\Gr)^{\check T_{\check H}}=\check T(F)/\check T(\calO)\cong X_\bullet(\check T)$ 
   and $(\Gr^{\check\theta})^{\check T_{\check H}}=
   \check T(F)^{\check\theta}/\check T(\calO)^{\check\theta}\cong X_\bullet(\check T)_{\check\theta}$ 
and localization theorem
implies that there are vertical isomorphism
in following diagram commutes
   \begin{equation}\label{localization rho}
       \xymatrix{\bC(\check\ft_{\check H})\otimes_{\bC[\check\ft_{\check H}]}\on{H}_*^{\check T_{\check H}}(\Gr^{\check\theta})\ar[r]^{}\ar[d]^\simeq&\bC(\check\ft_{\check H})\otimes_{\bC[\check\ft_{\check H}]}\on{H}_*^{\check T_{\check H}}(\Gr)\ar[d]^\simeq\\
   \bC(\check\ft_{\check H})[X_\bullet(\check T)_{\check\theta}]\ar[r]&\bC(\check\ft_{\check H})[X_\bullet(\check T)]}
   \end{equation}
  here $\bC(\check\ft_{\check H})$
  is the fraction field of $\bC[\check\ft_{\check H}]$, the upper horizontal arrow is 
  $\rho_*^{\check T_{\check H}}\otimes_{\bC[\check\ft_{\check H}]}\bC(\check\ft_{\check H})$,
and the lower horizontal arrow is induced by the 
natural embedding of fixed points
\[(\Gr^{\check\theta})^{\check T_{\check H}}\cong X_\bullet(\check T)_{\check\theta}\to
\Gr^{\check T_{\check H}}\cong X_\bullet(\check T)\]
sending $[\check\mu]$ to $\check\mu+\check\theta_0(\check\mu)$ (see ~\eqref{Kottwitz}).
Since both $\on{H}_*^{\check T}(\Gr)$
and $\on{H}_*^{\check T_{\check H}}(\Gr^{\check\theta})$ are flat $\bC[\check\ft_{\check H}]$-modules it implies 
$\rho_*$ is injective.

The commutative
  and cocommutative of 
$\on{H}_*^{\check T}(\Gr)$
follows from the fact that 
$\Gr\cong\Omega G_c$ is a homotopic commutative $\on{H}$-space. Now the commutative
  and cocommutative of 
  $\on{H}_*^{\check T_{\check H}}(\Gr^{\check\theta})$ follows from (1).

\end{proof}

It follows from Lemma
\ref{commutativity} that the Hopf algebra
$\on{H}_*^{\check G}(\Gr)$
(resp. $\on{H}_*^{\check H}(\Gr^{\check\theta})$)
is commutative and cocommutative and we denote by 
$\on{Spec}(\on{H}_*^{\check G}(\Gr))$ 
(resp. $\on{Spec}(\on{H}_*^{\check H}(\Gr^{\check\theta}))$) the associated 
affine flat commutative group scheme over 
$\check \fc_{}\cong\fc$ (resp. $\check \fc_{\check H}\cong\fc_H$).
It is shown in \cite{YZ} that there is an isomorphism of group schemes
\[
\on{Spec}(\on{H}_*^{\check G}(\Gr))\cong J_G.\]
Consider the map 
\begin{equation}\label{rho}
\rho^{\check H}=\on{Spec}(\rho^{\check H}_*):J_G|_{\fc_H}\cong
\on{Spec}(\on{H}_*^{\check H}(\Gr))\to  \on{Spec}(\on{H}_*^{\check H}(\Gr^{\check\theta})) 
    \end{equation}
between group schemes over $\fc_H$.

\begin{prop}\label{cohomology}
(1) There is an isomorphism
$\on{Spec}(\on{H}_*^{\check H}(\Gr^{\check\theta}))\cong J_H$
of group schemes over $\check\fc_{\check H}\cong\fc_H$.

(2)
 The following diagram commutes
    \[\xymatrix{\on{Spec}(\on{H}_*^{\check H}(\Gr))\ar[r]^{\rho^{\check H}}\ar[d]^\simeq&\on{Spec}(\on{H}_*^{\check H}(\Gr^{\check\theta}))\ar[d]^\simeq\\
J_G|_{\fc_H}\ar[r]^{\on{Nm}}&J_{H}}\]
    where $\on{Nm}$ is the norm map 
    $\on{Nm}(g)=g\theta_{0,J}(g)$ in~\eqref{norm}.
    
\end{prop}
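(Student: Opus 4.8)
The plan is to obtain part (1) by extending the computation of the equivariant homology of the affine Grassmannian from \cite{BMM,G1,YZ} to the twisted Grassmannian, and then to deduce part (2) by matching $\rho^{\check H}$ with $\on{Nm}$ after restricting to the regular semisimple locus $\fc_H^{rs}\subseteq\fc_H$, where both maps become an explicit homomorphism of tori controlled by a computation on cocharacter lattices. Throughout I will use $\on{H}^*_{\check H}(pt)\cong\bC[\check\fc_{\check H}]\cong\bC[\fc_H]$ and the isomorphism $J_G|_{\fc_H}\cong\on{Spec}\on{H}_*^{\check H}(\Gr)$ recorded in \eqref{rho} (which comes from $\on{Spec}\on{H}_*^{\check G}(\Gr)\cong J_G$ of \cite{YZ} by base change along $\fc_H\hookrightarrow\fc$).

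For (1): by Lemma \ref{commutativity} the Hopf algebra $\on{H}_*^{\check H}(\Gr^{\ct})$ is commutative, cocommutative and free over $\bC[\fc_H]$, so its spectrum $\calJ^{\ct}$ is a flat affine commutative group scheme over $\fc_H$. I would first promote the twisted Satake equivalence \eqref{Sat} to its $\check H$-equivariant (Kostant--Whittaker) form, so that $V\mapsto\on{H}^*_{\check H}(\Gr^{\ct},\Phi^{-1}(V))$ is a $\bC[\fc_H]$-linear symmetric monoidal functor out of $\on{Rep}(H)$ with values in free $\bC[\fc_H]$-modules of rank $\dim V$, on which $\on{H}_*^{\check H}(\Gr^{\ct})$ acts by monoidal automorphisms; this yields a homomorphism of affine group schemes over $\fc_H$,
\[
\calJ^{\ct}\longrightarrow\underline{\on{Aut}}^{\otimes}\bigl(V\mapsto V\otimes_{\bC}\bC[\fc_H]\bigr)\cong H\times\fc_H .
\]
Following \cite{YZ} one checks this is a closed immersion, the key point being that $\on{H}_*^{\check H}(\Gr^{\ct})$ is finitely generated over $\bC[\fc_H]$ by classes of Mirkovi\'c--Vilonen-type cycles in $\Gr^{\ct}$. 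It then remains to identify the image subgroup scheme with $J_H$. Over $\fc_H^{rs}$ this is equivariant localization exactly as in the proof of Lemma \ref{commutativity}: since $\check T_{\check H}$ contains regular elements of $\check T$ (Example \ref{list 2}), the fixed locus $(\Gr^{\ct})^{\check T_{\check H}}\cong X_\bullet(\check T)_{\check\theta}\cong X^\bullet(T_H)$ gives $\calJ^{\ct}|_{\fc_H^{rs}}\cong T_H\times\fc_H^{rs}=J_H|_{\fc_H^{rs}}$. To propagate this over all of $\fc_H$ I would use that the injective Hopf-algebra map $\rho^{\check H}_*$ (being $\rW_{\check H}$-invariants of the injective map $\rho_*^{\check T_{\check H}}$ of Lemma \ref{commutativity}, hence injective by flatness) exhibits $\calJ^{\ct}$ as a quotient group scheme of $J_G|_{\fc_H}$, and compare this with the faithfully flat quotient $\on{Nm}\colon J_G|_{\fc_H}\twoheadrightarrow J_H$ whose kernel $(J_G|_{\fc_H})^{\on{inv}\circ\theta_{0,J}}$ is flat over $\fc_H$ (see \eqref{norm} and Lemma \ref{exact}); since the two kernels agree over the dense open $\fc_H^{rs}$ and $J_H$ is smooth with connected fibers (Lemma \ref{J}, using $H$ adjoint), this forces $\calJ^{\ct}\cong J_H$. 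I expect this last, global step --- that the twisted homology is literally $\bC[J_H]$ and not merely agrees with it generically --- to be the main obstacle, since it is precisely the twisted analogue of the computations of \cite{BMM,G1,YZ}; an alternative is to run the identification through the nearby-cycle degeneration of $\Gr_{\check G}$ to $\Gr^{\ct}$ of \cite{Z}.

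For (2): both $\rho^{\check H}=\on{Spec}(\rho^{\check H}_*)$ and $\on{Nm}$ are morphisms of affine group schemes over $\fc_H$ with target $J_H$; since $J_G|_{\fc_H}$ is flat over $\fc_H$ with integral total space ($J_G$ being smooth with connected fibers) and $J_H$ is separated, it suffices to show the two agree over the dense open $\fc_H^{rs}$. There, equivariant localization identifies $\rho^{\check H}_*$ with the map induced by the inclusion of fixed points $(\Gr^{\ct})^{\check T_{\check H}}\hookrightarrow\Gr^{\check T_{\check H}}$, i.e.\ $X_\bullet(\check T)_{\check\theta}\to X_\bullet(\check T)$, $[\check\mu]\mapsto\check\mu+\check\theta_0(\check\mu)$ (see \eqref{Kottwitz} and the proof of Lemma \ref{commutativity}); taking spectra and using $X_\bullet(\check T)_{\check\theta}\cong X^\bullet(T_H)$, this says that $\rho^{\check H}|_{\fc_H^{rs}}\colon J_G|_{\fc_H^{rs}}=T\times\fc_H^{rs}\to T_H\times\fc_H^{rs}=J_H|_{\fc_H^{rs}}$ is the norm $t\mapsto t\,\theta_0(t)$ in the torus coordinate. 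On the other hand, the proof of Lemma \ref{exact} shows that over $\fc_H^{rs}$ the involution $\theta_{0,J}$ is the $\theta_0$-action on $T=J_G|_{\fc_H^{rs}}$ (using \eqref{theta} and that every element of $\fq^{*,rs}$ is $M$-conjugate into $\fa^{*,rs}$), so $\on{Nm}(g)=g\,\theta_{0,J}(g)$ restricts to the very same map $t\mapsto t\,\theta_0(t)$. Hence $\rho^{\check H}$ and $\on{Nm}$ coincide over $\fc_H^{rs}$, and therefore on all of $\fc_H$, which is the content of the commutative diagram.
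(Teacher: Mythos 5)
Your route for part (1) is genuinely different from the paper's, and it contains a gap that you yourself flag. The paper does \emph{not} try to directly identify $\on{Spec}(\on{H}_*^{\check H}(\Gr^{\ct}))$ with $J_H$ via a Tannakian closed immersion into $H\times\fc_H$ followed by a generic-to-global argument. Instead it constructs, via the nearby-cycle degeneration $\widetilde\Gr\to\bbA^1$ of \cite{Z} and the adjunction $\phi_f(\omega_\Gr)\to\omega_{\Gr^{\ct}}$, a Hopf-algebra map $\psi_*^{\check H}\colon\bC[J_G|_{\fc_H}]\cong\on{H}_*^{\check H}(\Gr)\to\on{H}_*^{\check H}(\Gr^{\ct})$ \emph{in the opposite direction} to your $\rho_*^{\check H}$, then proves (i) $\psi_*^{\check H}$ is \emph{surjective} (by dualizing to cohomology and running a twisted version of Ginzburg's transverse-intersection argument using $\check I_\infty^{\ct}$-orbits, combined with Zhu's theorem that $\psi_*\IC_\mu$ splits into spherical $\IC$'s), and (ii) the factorization $\rho_*^{\check H}\circ\psi_*^{\check H}=\on{Nm}^*$ holds, which \emph{is} checked by the localization you describe. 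With $\rho_*^{\check H}$ injective (your Lemma \ref{commutativity}), $\psi_*^{\check H}$ surjective, and the factorization, the image of $\rho_*^{\check H}$ is exactly $\on{Im}(\on{Nm}^*)\cong\bC[J_H]$ (using Lemma \ref{exact} that $\on{Nm}$ is onto $J_H$), which gives both (1) and (2) simultaneously.

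The concrete gap in your approach: the assertion that the Tannakian map $\calJ^{\ct}\to H\times\fc_H$ is a closed immersion is not established, and it is precisely the nontrivial twisted analogue of the computation in \cite{YZ}. Nothing you write reduces it to something in the literature; you just invoke it and then observe that if it held, the generic-to-global step would follow from flatness over the integral base $\fc_H$. Your suggested alternative fallback --- comparing the kernels of the two putative quotient maps $\rho^{\check H}$ and $\on{Nm}$ on $J_G|_{\fc_H}$ --- has further unaddressed issues: from injectivity of the Hopf-algebra map $\rho_*^{\check H}$ alone you do not immediately get that $\rho^{\check H}$ is a faithfully flat quotient map over the base $\fc_H$ (this requires a Takeuchi-type statement, known over a field but not automatic in the relative setting), and without that, the kernel is not obviously a flat closed subgroup scheme to which the ``determined by generic fiber'' argument applies. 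Your final suggestion, ``an alternative is to run the identification through the nearby-cycle degeneration of $\Gr_{\check G}$ to $\Gr^{\ct}$,'' is exactly what the paper does, but the heavy lifting there (the surjectivity of $\psi_*^{\check H}$) is not trivial and is not sketched. For part (2), given part (1), your localization argument on $\fc_H^{rs}$ and the torsion-free argument over the integral base to propagate agreement does match the paper in substance.
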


\begin{proof}
We shall construct a surjective map
\begin{equation}\label{psi_*}
\psi^{\check H}_*:\bC[J_G|_{\fc_H}]\cong\on{H}_*^{\check H}(\Gr)\ra\on{H}_*^{\check H}(\Gr^{\check\theta})
\end{equation} of Hopf algebras such that  the composition 
\begin{equation}\label{Nm property}
\rho_*^{\check H}\circ\psi_*^{\check H}=\on{Nm}^{*}:\bC[J_G|_{\fc_H}]\to
\on{H}_*^{\check H}(\Gr^{\check\theta})\to \bC[J_G|_{\fc_H}]
\end{equation}
is given by the pull-back along the  map
$\on{Nm}:J_G|_{\fc_H}\to J_G|_{\fc_H}$.
Since $\rho_*$ is injective,
it follows that $\on{Spec}(\on{H}_*^{\check H}(\Gr^{\check\theta}))$ is isomorphic to the image $\on{Spec}(\on{H}_*^{\check H}(\Gr^{\check\theta}))\cong\on{Im}(\Nm)$.
On the other hand,
Lemma \ref{exact} implies that 
$\on{Im}(\Nm)=J_H$ and altogether we obtain 
\[\on{Spec}(\on{H}_*^{\check H}(\Gr^{\check\theta}))\cong J_H\]
and $\rho\cong\on{Nm}$.
Part (1) and (2) follow.

To  construct~\eqref{psi_*}, consider the 
Beilinson-Drinfeld Grassmannian 
$f:\widetilde\Gr\to\mathbb A^1$ 
in \cite[Section 2]{Z} 
such that 
$\widetilde\Gr|_{\mathbb G_m}\cong\Gr\times\mathbb G_m$
and $\widetilde\Gr|_{0}\cong\Gr^{\check\theta}$.
The nearby cycle functor 
associated to the family $f$ gives rise to a functor
\begin{equation}\label{phi}
     \phi_f:D_H(\Gr)\to D_H(\Gr^{\check\theta}).
\end{equation}
 between the $\check H$-equivariant derived categories of constructible sheaves.
We claim that there is a natural map
$\phi_f(\omega_\Gr)\to\omega_{\Gr^{\check\theta}}$ which induces the desired map
\begin{equation}\label{phi}
    \psi_*^{\check H}:\on{H}_*^{\check H}(\Gr)
\cong
\on{H}^*_{\check H}(\Gr^{\check\theta},\phi_f\omega)
\to\on{H}^*_{\check H}(\Gr^{\check\theta},\omega)\cong\on{H}_*^{\check H}(\Gr^{\check\theta})
\end{equation}
Indeed, by \cite[Lemma 4.8]{CMNO}, there is an isomorphism $\phi^{\check H}_f\cong \psi^{\check H_c}_*\cong\psi^{\check H_c}_!$ where 
$\psi:\Gr\to\Gr^{\check\theta}$ is 
a $H_c$-equivariant specialization map.
The desired map is given by the adjunction 
\[\phi_f^{\check H}(\omega_\Gr)\cong
\psi^{\check H_c}_*\omega_{\Gr}\cong\psi^{\check H_c}_!\omega_{\Gr}\to\omega_{\Gr^{\check\theta}}.\]

We show that $\psi_*^{\check H}$~\eqref{phi}
satisfies the equality~\eqref{Nm property}.
Since $\on{H}_*^{\check H}(\Gr)\cong\bC[J_G|_{\fc_H}]$ is free over $\bC[\fc_H]$, it suffices to check
the equality
\begin{equation}\label{loc c}
(\rho^{\check H}_*\circ\psi^{\check H}_*)\otimes\bC(\fc_{H})=(\on{Nm}^*)\otimes\bC(\fc_H)
\end{equation}
after tensoring the fraction field $\bC(\fc_H)$.
It follows from \cite[Lemma 5.7]{Z}
that there is a commutative diagram
\begin{equation}\label{localization phi}
       \xymatrix{\bC(\check\ft_{\check H})\otimes_{\bC[\check\ft_{\check H}]}\on{H}_*^{\check T_{\check H}}(\Gr)\ar[r]^{}\ar[d]^\simeq&\bC(\check\ft_{\check H})\otimes_{\bC[\check\ft_{\check H}]}\on{H}_*^{\check T_{\check H}}(\Gr^{\check\theta})\ar[d]^\simeq\\
   \bC(\check\ft_{\check H})[X_\bullet(\check T)]\ar[r]&\bC(\check\ft_{\check H})[X_\bullet(\check T)_{\check\theta}]}
   \end{equation}
where the upper arrow is 
$\psi_*^{\check T_{\check H}}\otimes\bC(\check\ft_H)$ and
the bottom arrow is induced by the map 
$X_\bullet(\check T)\to X_\bullet(\check T)_{\check\theta_0}$ sending $\check\mu$ to it image 
$[\check\mu]$. Combining with descrption of 
$\rho_*^{\check T_{\check H}}\otimes\bC(\check\ft_H)$ in 
~\eqref{localization rho}, we see that 
\begin{equation}\label{loc t}
(\rho^{\check T_{\check H}}_*\circ\psi^{\check T_{\check H}}_*)\otimes\bC(\check\ft_{\check H})=(\on{Nm}^*)\otimes\bC(\check\ft_{\check H}):
\bC(\check\ft_{\check H})\otimes_{\bC[\check\ft_{\check H}]}\on{H}_*^{\check T_{\check H}}(\Gr)\to\bC(\check\ft_{\check H})\otimes_{\bC[\check\ft_{\check H}]}\on{H}_*^{\check T_{\check H}}(\Gr)
\end{equation}
Taking $\rW_H$-invariants of~\eqref{loc t} 
we get the desired equality in~\eqref{loc c}

We show that $\psi^{\check H}_*$ in ~\eqref{phi} is surjective. It suffices to check 
$\psi_*:\on{H}_*(\Gr)\to\on{H}_*(\Gr^{\check\theta})$ is surjective for non-equivariant homology because 
any homogeneous lifting of 
a $\bC[\fc_H]$-basis of $\on{H}_*^{\check H}(\Gr^{\check\theta})$ we can choose these lifting to be in the image  of 
$\psi^{\check H}_*$ since $\on{H}^{\check H}_*(\Gr)\to \on{H}_*(\Gr)$ is surjective.
Instead, we will show that the dual map
$\psi^*:\on{H}^*(\Gr^{\check\theta})\to\on{H}^*(\Gr)$
is injective.
Let $\psi^*(a)\in \on{H}^*(\Gr^{})$ be the image of a non-zero 
$a\neq0\in\on{H}^*(\Gr^{\check\theta})$.
It follows from \cite{G1,YZ} that 
, to check that  $\psi^*(a)\neq0$, it suffices to show that 
there is a $\IC$-complex $\IC_\mu\in\on{Perv}_{\check G(\calO)}(\Gr)$ such that $\psi^*(a)$ acts non-trivially on 
$\on{H}^*(\Gr,\IC_\mu)$. This is equivalent to check that the action of $a$ on 
$\on{H}^*(\Gr,\IC_\mu)\cong \on{H}^*(\Gr^{\check\theta},\psi^{}_*\IC_\mu)$
is non-trivial. According to  \cite{Z}, 
$\psi^{}_*\IC_\mu$ is a direct sum of 
$\IC_{[\lambda]}\in\on{Perv}_{\check G(\calO)^{\check\theta}}(\Gr^{\check\theta})$
and we reduce to check that 
$a$ acts non-trivially on $\on{H}^*(\Gr^{\check\theta},\IC_{[\lambda]})$
for some $\IC_{[\lambda]}$. 
In the case when $\theta_0$ is trivial, 
so that $\Gr^{\check\theta}\cong\Gr$,
this follows from a general fact in  \cite[Proposition 4.3.3]{G1},
using general properties of 
intersections of $\check G(\calO)$-orbits and 
$\check I_\infty$-orbits  on 
$\Gr$, where $\check I_\infty=\on{ker}(\check G[t^{-1}]\to\check B)$ is the polynomial Iwahori subgorup.\footnote{The polynomial Iwahori subgorup is denoted by $I$ in \emph{loc. cit.}}
The same argument works in the twisted case
$\check\theta_0\neq\on{id}$ where one uses the 
twisted polynomial Iwahori subgorup  
$\check I_\infty^{\ct}=\check I_\infty\cap G(F)^{\check\theta}$.

\end{proof}

\begin{remark}\label{[2]}
Here is an alternative argument for Proposition \ref{cohomology} in 
 the case $\check\theta_0$ is trivial.
 Under the homeomorphisms
$\Gr\cong\Omega G_c$ and 
  $\Gr^{\check\theta}=\check G(F')/\check G(\calO')\cong\Omega \check G_c$ the 
  inclusion $\check\rho:\Gr^{\check\theta}\to\Gr$
  becomes the square map 
  $[2]:\Omega \check G_c\to\Omega \check G_c$ sending $\gamma \to \gamma^2$. 
  Since the square map factors as  
  $[2]:\Omega\check G_c\stackrel{\Delta}\to\Omega \check G_c\times \Omega \check G_c\stackrel{m}\to\Omega \check G_c$, the claim follows from the fact that the 
  coproduct and product on $\bC[J_G]\cong\on{H}_*^{\check G_c}(\Omega\check G_c)$
  are given by push-forward along $\Delta$
  and $m$.
\end{remark}

\subsection{Monoidal structures on equivariant cohomology}
Let $\on{Lie}J_H$ be the Lie algebra of 
$J_H$ and $U(\Lie J_H)$ its universal enveloping algebra.
Then Proposition \ref{cohomology}
implies that 
there is an isomorphism 
$\on{H}^*_{\check H}(\Gr^{\ct})\cong U(\Lie J_H)$
such that the paring between cohomology and homology of $\Gr^{\ct}$
becomes the paring between universal enveloping algebra and ring of functions for the group
scheme $J_H$, see \cite[Section 4.6.2]{CMNO}.

Let $D^b_{\check G(\calO)^{\ct}}(\Gr^{\ct})$
be the $\check G(\calO)^{\ct}$-equivariant derived category of 
$\Gr^{\ct}$.
Consider the equivariant cohomology fucntor
\[\on{H}^*_{\check H}(\Gr^{\ct},-):D^b_{\check G(\calO)^{\ct}}(\Gr^{\ct})\to 
D^b(\on{H}^*_{\check H}(pt)\on{-mod})\cong
D^b(\bC[\fc_H]\on{-mod}).\]
It follows from Kunneth formula that $\on{H}^*_{\check H}(\Gr^{\ct},-)$ has a monoidal 
structure 
\begin{equation}\label{monoidal}
    \on{H}^*_{\check H}(\Gr^{\ct},\calF\star\calF')\cong 
\on{H}^*_{\check H}(\Gr^{\ct},\calF)\otimes_{\bC[\fc_H]}
\on{H}^*_{\check H}(\Gr^{\ct},\calF')
\end{equation}
see \cite[Proposition 3.4.1]{G1}.
Since $\on{H}^*_{\check H}(\Gr^{\ct})\cong U(\Lie J_H)$
is  generated by the primitive elements $\Lie J_H$, the isomorphism~\eqref{monoidal} 
is compatible with the natural $\on{H}^*_{\check H}(\Gr^{\ct})$-actions, see \cite[Lemma 4.2.1]{G1}.
Moreover, the same argument as in \cite[Lemma 3.1]{YZ} shows that the $\Lie J_H$-action
on $\on{H}^*_{\check H}(\Gr^{\ct},\calF)$
can be exponentiated to a 
$J_H$-action.

\subsection{Derived Satake equivalence
for the twisted affine Grassmannians}
Let $D_{\check G(\calO)^{\ct}}(\Gr^{\ct})$
be the co-complete monoidal
dg category of 
$\check G(\calO)^{\ct}$-equivariant 
 constructible complexes on 
$\Gr^{\ct}$
with monoidal structure given by the 
convolution product.
It is known that $D_{\check G(\calO)}(\Gr)$ is compactly generated and we denote by $D_{\check G(\calO)}(\Gr)^c$
the subcategory of compact objects.
The equivariant derived category
$D^b_{\check G(\calO)^{\ct}}(\Gr^{\ct})\subset D_{\check G(\calO)^{\ct}}(\Gr^{\ct})$ coincides with the 
subcategories of 
locally compact objects\footnote{An object 
$\calF\in D_{\check G(\calO)^{\ct}}(\Gr^{\ct})$ is called locally compact if $\on{For}(\calF)\in\in D_{}(\Gr^{\ct})$ is compact.} and we denote by $\on{Ind}(D^b_{\check G(\calO)}(\Gr))$ the ind-completion of $D^b_{\check G(\calO)}(\Gr)$.
We denote by $D_{\check G(\calO)}(\Gr), D_{\check G(\calO)}(\Gr)^c$, etc, the corresponding categories for $\Gr$.

Denote by
$D^{G}(\on{Sym}(\fg[-2]))$ be the monoidal dg-category of  $G$-equivariant dg-modules 
over the dg-algebra $\on{Sym}(\fg[-2])$ (equipped with trivial differential) with monoidal structure given by (derived) tensor prodcut
$(\mF_1,\mF_2)\to \mF_1\otimes\mF_2:=
\mF_1\otimes^L_{\on{Sym}(\fg[-2])}\mF_2$.
It is known that $D^{G}(\on{Sym}(\fg[-2]))$ is compactly generated and the full subcategory 
$D^{G}(\on{Sym}(\fg[-2]))^c$ of compact objects 
coincides with 
be the full subcategory $D^{G}(\on{Sym}(\fg[-2]))^c=D^{G}_{\on{perf}}(\on{Sym}(\fg[-2]))$
consisting of perfect modules.
Denote by $D^{G}(\on{Sym}(\fg[-2]))_{\mathcal N_{\fg^*}}$
and  $D^{G}_{\on{perf}}(\on{Sym}(\fg[-2]))_{\calN_{\fg^*}}$
the full subcategory of $D^{G}(\on{Sym}(\fg[-2]))$
and $D^{G}_{\on{perf}}(\on{Sym}(\fg[-2]))$ respectively
consisting of modules that are set theoretically supported on the 
nilpotent cone $\calN_{\fg^*}$.
We denote by $D^H(\Sym(\fh[-2]))$, etc, the corresponding categories for $H$.

The derived Satake equivalence in \cite{BF,AG} asserts that there are monoidal equivalences
\begin{equation}\label{DSat}
 \Phi^b:D^b_{\check G(\calO)}(\Gr)\cong D^{G}_{\on{perf}}(\on{Sym}(\fg[-2]))\ \ 
 \on{Ind}\Phi^b:\on{Ind}(D^b_{\check G(\calO)}(\Gr))\cong D^{G}(\on{Sym}(\fg[-2]))
\end{equation}
\begin{equation}\label{DSat2}
\Phi^c:D_{\check G(\calO)}(\Gr)^c\cong D^{G}_{\on{perf}}(\on{Sym}(\fg[-2]))_{\calN_{\fg^*}},\ \
\Phi:D_{\check G(\calO)}(\Gr)\cong D^{G}(\on{Sym}(\fg[-2]))_{\calN_{\fg^*}}
\end{equation}
Note that $\Phi\cong\on{Ind}(\Phi^c)$.

We have the following version of derived Satake equivalence for $\Gr^{\ct}$:
\begin{thm}\label{twisted DSat}
(1) There is a canonical monoidal equivalence 
\[
 \on{Ind}\Phi^b_{\ct}:\on{Ind}(D^b_{\check G(\calO)^{\ct}}(\Gr^{\ct}))\cong D^{H}(\on{Sym}(\fh[-2]))\]
 which restricts to a monoidal equivalence
\[
 \Phi^b_{\ct}:D^b_{\check G(\calO)^{\ct}}(\Gr^{\ct})\cong D^{G}_{\on{perf}}(\on{Sym}(\fh[-2]))
 \]
 on compact objects.

 (2)There is a canonical monoidal equivalence 
 \[
\Phi_{\ct}:D_{\check G(\calO)^{\ct}}(\Gr^{\ct})\cong D^{H}(\on{Sym}(\fh[-2]))_{\calN_{\fh^*}}
\]
which restricts to a monoidal equivalence
 \[
\Phi^c_{\ct}:D_{\check G(\calO)^{\ct}}(\Gr^{\ct})^c\cong D^{H}_{\on{perf}}(\on{Sym}(\fh[-2]))_{\calN_{\fh^*}}\]
on compact objects.
\end{thm}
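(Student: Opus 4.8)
The plan is to upgrade the abelian twisted Satake equivalence of Zhu~\cite{Z}, $\on{Sat}_{\ct}:(\on{Rep}(H),\otimes)\cong(\on{Perv}_{\check G(\calO)^{\ct}}(\Gr^{\ct}),\star)$ from~\eqref{Sat}, to the derived level, following the strategy of Bezrukavnikov--Finkelberg~\cite{BF} and our earlier work~\cite{CMNO}. First I would consider the regular ind-object $\IC_{\bC[H]}$: since $\bC[H]$ is a commutative algebra in $\on{Rep}(H)$ carrying an $H\times H$-action, $\IC_{\bC[H]}$ is a commutative algebra object in $\on{Ind}(D^b_{\check G(\calO)^{\ct}}(\Gr^{\ct}))$ equipped with an $H\times H$-action, and the category of modules over $\IC_{\bC[H]}$ is the de-equivariantization of $D^b_{\check G(\calO)^{\ct}}(\Gr^{\ct})$ relative to $\on{Rep}(H)$. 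The first main step is to compute the de-equivariantized endomorphism dg-algebra $\mathcal{A}_{\ct}:=\on{H}^*_{\check H}(\Gr^{\ct},\IC_{\bC[H]})$, together with the monoidal structure coming from~\eqref{monoidal}, and to prove that it is \emph{formal}.

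For the computation I would argue in cohomological degree: the degree-zero part is governed by $\on{Sat}_{\ct}$ and equals $\bC[H]$, while the identification of the full graded algebra uses Proposition~\ref{cohomology}, i.e.\ $\on{Spec}(\on{H}^{\check H}_*(\Gr^{\ct}))\cong J_H$ and the resulting isomorphism $\on{H}^*_{\check H}(\Gr^{\ct})\cong U(\Lie J_H)$, the fact that the $\Lie J_H$-action on $\on{H}^*_{\check H}(\Gr^{\ct},\calF)$ exponentiates to a $J_H$-action, the Kostant--Rallis description of the regular centralizer $J_H$ over $\fc_H$, and the freeness of $\on{H}^*_{\check H}(\Gr^{\ct})$ over $\bC[\fc_H]$. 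The outcome should be that $\mathcal{A}_{\ct}$ is (formal, with underlying graded algebra) the algebra whose category of $H$-equivariant dg-modules is $D^{H}(\on{Sym}(\fh[-2]))$. To pin down the monoidal structure and the $\on{Sym}(\fh[-2])$-linearity coherently I would use an equivariant, monoidal enhancement of the nearby cycle functor $\phi_f:D_{\check G(\calO)}(\Gr)\to D_{\check G(\calO)^{\ct}}(\Gr^{\ct})$ of~\cite{Z}: on generators it sends $\IC_\mu$ to $\bigoplus_\lambda\IC_{[\lambda]}$, matching the branching $\on{Res}^{G}_{H}V_\mu=\bigoplus_\lambda V_\lambda$ by~\cite{Z}, and by the localization diagram~\eqref{localization phi} together with Proposition~\ref{cohomology} it corresponds, under the untwisted derived Satake equivalence~\eqref{DSat}, to the restriction functor $D^{G}_{\on{perf}}(\on{Sym}(\fg[-2]))\to D^{H}_{\on{perf}}(\on{Sym}(\fh[-2]))$ induced by $\fh\hookrightarrow\fg$; this forces the required identification of $\mathcal{A}_{\ct}$ and of the monoidal datum.

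Granting formality and the computation of $\mathcal{A}_{\ct}$, the de-equivariantized equivalence between the category of modules over $\IC_{\bC[H]}$ and $D(\on{Sym}(\fh[-2]))$ follows, and re-equivariantizing --- taking $H$-equivariant objects for the residual $H\times H$-action --- yields the monoidal equivalence $\on{Ind}\Phi^b_{\ct}$ of part~(1), with $\Phi^b_{\ct}$ its restriction to compact objects. Fully faithfulness reduces, after formality, to matching graded $\on{Hom}$'s between the generating objects $\IC_{[\mu]}$ and the free modules $V_\lambda\otimes\on{Sym}(\fh[-2])$, which is exactly what the cohomology computation provides; essential surjectivity, equivalently generation, uses that the IC sheaves generate $D^b_{\check G(\calO)^{\ct}}(\Gr^{\ct})$ --- a consequence of the parity/semismallness of the $\check G(\calO)^{\ct}$-orbit stratification of $\Gr^{\ct}$ established in~\cite{Z} --- and that the corresponding free modules generate $D^{G}_{\on{perf}}(\on{Sym}(\fh[-2]))$. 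For part~(2), the co-complete constructible category $D_{\check G(\calO)^{\ct}}(\Gr^{\ct})$ corresponds, exactly as in the untwisted case~\eqref{DSat2}, to the full subcategory of modules set-theoretically supported on $\calN_{\fh^*}$: on the geometric side this is the passage from locally compact to all constructible complexes, on the spectral side it is the nilpotency of the $\on{Sym}(\fh^*[-2])$-action, and the two match because the cohomology functor detects support.

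The main obstacle will be the \textbf{formality} of $\mathcal{A}_{\ct}$ (and, more generally, of convolutions of $!$-pure objects). As in~\cite{BF,CMNO} this calls for a weight argument: one must show that the twisted IC sheaves are pure and that $\IC_{\bC[H]}\star\IC_{\bC[H]}$ stays pure --- i.e.\ that the relevant twisted convolution morphism is semismall --- and then invoke a Frobenius- or Hodge-weight formality criterion, or alternatively equip $\Gr^{\ct}$ with a loop-rotation $\bG_m$-action and use the extra grading. A secondary technical point is the precise identification of the graded algebra $\mathcal{A}_{\ct}$, where one must handle carefully the cohomological degree shift relating $\fh$ and $\fc_H$ (Kostant--Rallis freeness) and use that, although $K$ may be disconnected, $H$ is adjoint, so $J_H$ has connected fibres and the re-equivariantization is well behaved.
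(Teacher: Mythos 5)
Your plan follows essentially the same route as the paper's proof: de-equivariantize with respect to the twisted Satake action of $\on{Rep}(H)$, apply Barr--Beck--Lurie to reduce to computing the de-equivariantized endomorphism dg-algebra of the unit, establish its formality via the pointwise purity/parity properties of twisted $\IC$-sheaves from~\cite{Z} together with the $\check T_{\check H}$-equivariant localization and the formality machinery of~\cite{BF,CMNO}, identify the underlying graded algebra with $\Sym(\fh[-2])$ using Proposition~\ref{cohomology} and the Kostant--Whittaker/regular-centralizer description, and use the nearby-cycle functor $\phi_f$ to deduce the monoidal structure, with part~(2) obtained by the support-detection argument as in~\eqref{DSat2}. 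The one small divergence worth noting is that you raise semismallness of twisted convolution and a possible loop-rotation grading as routes to formality, whereas the paper gets formality directly from the pointwise purity statement~\cite[Lemma~1.1]{Z} and the weight argument of~\cite[Section~6.5]{BF}; the published argument is the cleaner one here since purity of $\IC_{\bC[H]}\star\IC_{\bC[H]}$ is not needed separately once pointwise purity and $\check T_{\check H}$-equivariant formality are in hand.
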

\begin{proof}
Write $\calC:=\on{Ind}(D_{\check G(\calO)^{\ct}}(\Gr^{\ct}))$.
The dg-category $\calC$
is a module category for the dg-category 
$D(\on{QCoh}(BH))$ of quasi-coherent sheaves on the classifying stack $BH$ where the  monoidal action is given by the  
the Satake equivalence~\eqref{Sat}.
We can 
form the de-equivariantized category $\calC_{\on{deeq}}:=\calC\times_{BH}\{\on{pt}\}$ 
with objects $\on{Ob}(\calC_{\on{deeq}})=\on{Ob}(\calC)$ and (dg)-morphisms
\[\on{Hom}_{\calC_{\on{deeq}}}(\mF_1,\mF_2)=\on{Hom}_{\calC}(\mF_1,\mF_2\star\bC[H])=
\on{RHom}_{\on{Ind}(D_{\check G(\calO)^{\ct}}(\Gr^{\ct}))}(\mF_1,\mF_2\star\bC[H])).\]
Every object $\mF\in\calC_{\on{deeq}}$ carries a natural action of $H$ and we can recover $\calC$ by taking $H$-equivariant objects in $\calC_{\on{deeq}}$.
The fact that $\IC_0$ is compact and generates $\calC$ under the action of $D\on{QCoh}(BH)$
implies that $\IC_0$, viewed as an object in $\calC_{\on{deeq}}$, is a compact generator.
Hence the Barr-Beck-Lurie theorem implies 
that the assignment $\mF\to \on{Hom}_{\calC_{\on{deeq}}}(\IC_0,\mF)$
defines  an equivalence of categories 
\[\calC_{\on{deeq}}\is D(\on{Hom}_{\calC_{\on{deeq}}}(\IC_0,\IC_0)^{\on{op}})\ \ \  \ (resp.\ \  \calC_{}
\is D^{G_n}(\on{Hom}_{\calC_{\on{deeq}}}(\IC_0,\IC_0)^{\on{op}}))
\]
where $\on{Hom}_{\calC_{\on{deeq}}}(\IC_0,\IC_0)^{op}$ is the opposite of the dg-algebra of endomorphism 
of $\IC_0$ and 
$D(\on{Hom}_{\calC_{\on{deeq}}}(\IC_0,\IC_0)^{\on{op}})$
(resp. $D^{G_n}(\on{Hom}_{\calC_{\on{deeq}}}(\IC_0,\IC_0)^{\on{op}})$)
are the corresponding dg-categories of dg-modules (resp. $H$-equivariant dg-modules).
Using the pointwise purity of $\IC$-complexes in \cite[Lemma 1.1]{Z}, 
one can apply the standard results in
\cite[Section 6.5]{BF} (see Theorem \ref{formality} for a more general formality resutls) to conclude
that the dg-algebra $\on{Hom}_{\calC_{\on{deeq}}}(\IC_0,\IC_0)^{\on{op}}$ is formal and hence there is 
$H$-equivariant
isomorphism
\[\on{Hom}_{\calC_{\on{deeq}}}(\IC_0,\IC_0)^{\on{op}}\is\on{RHom}_{\on{Ind}(D_{\check G(\calO)^{\ct}}(\Gr^{\ct}))}(\IC_0,\IC_0\star\bC[H])^{op}\is\on{Ext}^*_{\on{Ind}(D_{\check G(\calO)^{\ct}}(\Gr^{\ct}))}(\IC_0,\IC_0\star\bC[H])^{op}.\]
Using 
the parity vanishing of $\IC$-complexes 
and surjectivity (resp. injectivity)
of equivariant stalks (resp. equivariant co-stalks) in
\cite[Lemma 1.1 and Lemma 5.8]{Z}, one can apply the
fully-faithfulness property 
of equivariant cohomology functor
\cite[Lemma 4.20]{CMNO} or \cite{G1} 
to conclude that there is a $H$-equivariant isomorphism of graded algebras
\[\on{Ext}^*_{\on{Ind}(D_{\check G(\calO)^{\ct}}(\Gr^{\ct}))}(\IC_0,\IC_0\star\bC[H])^{op}\cong\on{Hom}^\bullet 
_{\on{H}^*_{\check H}(\Gr^{\ct})}(\on{H}^*_{\check H}(\IC_0),\on{H}^*_{\check H}(\IC_0\star\bC[H]))^{op}\cong\]
\[\cong\on{Hom}^\bullet 
_{\on{H}^*_{\check H}(\Gr^{\ct})}(\on{H}^*_{\check H}(\IC_0),\on{H}^*_{\check H}(\IC_0)\otimes_{\bC[\fc_H]} \on{H}^*_{\check H}(\bC[H]))^{op}
\cong\on{Hom}^\bullet 
_{\on{H}^*_{\check H}(\Gr^{\ct})}(\bC[\fc_H],\bC[\fc_H\times H])^{op}\]
Note that 
\[\on{Hom}^\bullet 
_{\on{H}^*_{\check H}(\Gr^{\ct})}(\bC[\fc_H],\bC[\fc_H\times H])^{op}\cong(\bC[\fc_H\times H]^{\on{H}^*_{\check H}(\Gr^{\ct})})^{op}\cong(\bC[\fc_H\times H]^{\on{Spec}(\on{H}_*^{\check H}(\Gr^{\ct}))})^{op}\cong\]
\[\stackrel{(1)}\cong
(\bC[\fc_H\times H]^{J_H})^{op}\stackrel{(2)}\cong
(\bC[\fh^{*,r}[2]])^{op}\stackrel{(3)}\cong(\bC[\fh^{*}[2]])^{op}\stackrel{(4)}\cong\bC[\fh^{*}[2]]\cong\Sym(\fh[-2]).\]
Here (1) follows from Proposition \ref{cohomology}, (2) follows from Lemma \ref{J}(3)
and the fact that under the isomorphism
$\on{H}^*(\Gr^{\ct},\IC_0\star\bC[H])\cong \bC[H]$ the cohomological grading on 
$\bC[H]$ 
is induced by the right $\bG_m$-action 
on $H$ through the co-character $\bG_m\subset SL_2\stackrel{\phi}\to H$, see \cite[Theorem 5.1]{Z}, (3) because 
$\fh^*\setminus\fh^{*,r}$ has codimension $\geq2$,
and (4)
because $\bC[\fh^*[2]]$ 
is commutative with grading in even degree. 
All together,
we conclude that there is an equivalence
\begin{equation}\label{formula}
    \on{Ind}\Phi^b_{\ct}:\on{Ind}(D^b_{\check G(\calO)^{\ct}}(\Gr^{\ct}))\is D^H(\Sym(\fh[-2]))
\end{equation}
\[\mF \to \on{RHom}_{\on{Ind}(D^b_{\check G(\calO)^{\ct}}(\Gr^{\ct}))}(\IC_0,\mF\star\bC[H]).\]
Passing to subcategories of compact objects 
we obtain 
\[\Phi_{\ct}^b:D^b_{\check G(\calO)^{\ct}}(\Gr^{\ct})\cong D^{G}_{\on{perf}}(\on{Sym}(\fh[-2])).\]
Assuming the compatibility with 
nearby cycles functors in Theorem \ref{Comp}, the monoidal properties follows from the general discussion in \cite[Section 5.4]{CMNO}.

Part (2) follows from the general discussion in \cite[Section 12]{AG} (see also Theorem \ref{twisted DSat}
for the proof 
in the similar (but more involved) setting of 
derived Satake for loop symmetric spaces).

\end{proof}

\subsection{Spectral description of nearby cycles functors}
Consider the functor 
\[\phi_{f}:\on{Ind}D^b_{\check G(\calO)}(\Gr)\stackrel{\on{For}}\to \on{Ind}D_{\check G(\calO)^{\ct}}(\Gr)\stackrel{\phi_f}\to \on{Ind}D_{\check G(\calO)^{\ct}}(\Gr^{\ct})\]
where $\on{For}$ is the forgetful functor and 
$\phi_f$ is the nearby cycle functor in~\eqref{phi}.
Consider the functor 
\[p_*:D^{G}(\on{Sym}(\fg[-2]))\to D^{H}(\on{Sym}(\fh[-2]))\]
induced by the projection
$p:\fg=\fh\oplus\fk\to\fh$.
We have the following spectral description of the nearby cycle functor 
$\phi_f$:

\begin{prop}\label{Comp}
The following commutative diagram
 \[\xymatrix{\on{Ind}(D^b_{\check G(\calO)}(\Gr))\ar[r]^{\phi_f}\ar[d]_{}^\simeq&\on{Ind}(D^b_{\check G(\calO)^{\ct}}(\Gr^{\ct}))\ar[d]^\simeq\\
D^G(\Sym(\fg[-2]))\ar[r]^{p_*}&\D^H(\Sym(\fh[-2]))}\]
\end{prop}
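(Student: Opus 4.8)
The plan is to produce a natural isomorphism of functors $\on{Ind}\Phi^b_{\ct}\circ\phi_f\cong p_*\circ\on{Ind}\Phi^b$ and to verify it on the unit object by a module-functor argument. Recall, as in the proof of Theorem~\ref{twisted DSat} and the classical case \cite{BF,AG}, that both vertical equivalences are the de-equivariantization functors $\on{Ind}\Phi^b(\mF)=\on{RHom}(\IC_0,\mF\star\IC_{\bC[G]})$ and $\on{Ind}\Phi^b_{\ct}(\mathcal G)=\on{RHom}(\IC_0,\mathcal G\star\IC_{\bC[H]})$, and that $p_*$ is the monoidal functor obtained by restricting equivariance along the embedding $H\hookrightarrow G$ and then base-changing $-\otimes^{L}_{\on{Sym}(\fg[-2])}\on{Sym}(\fh[-2])$ along the surjection of algebras determined by $p$; in particular $p_*$ is monoidal and sends the unit $\on{Sym}(\fg[-2])$ to the unit $\on{Sym}(\fh[-2])$.

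The key geometric facts about $\phi_f$ (the composite of $\on{For}$ with nearby cycles) are: on the abelian Satake categories it is the restriction tensor functor $\on{Res}^G_H\colon\on{Rep}(G)\to\on{Rep}(H)$ and is monoidal for convolution (Zhu \cite{Z}, via the Beilinson--Drinfeld degeneration $\widetilde\Gr\to\bbA^1$, with $\psi_*\IC_\mu$ a direct sum of the $\IC_{[\lambda]}$); by d\'evissage from the perverse case this monoidality propagates to $D^b_{\check G(\calO)}(\Gr)$, so that $\phi_f(\mF\star\IC_{\bC[G]})\cong\phi_f(\mF)\star\phi_f(\IC_{\bC[G]})$; $\phi_f(\IC_0)\cong\IC_0$; and there is a canonical $H$-equivariant algebra surjection $\phi_f(\IC_{\bC[G]})\twoheadrightarrow\IC_{\bC[H]}$ induced by restriction of functions along $H\hookrightarrow G$, i.e.\ by $\on{Res}^G_H\bC[G]\to\bC[H]$. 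These produce, for each $\mF$, a natural map $\on{Ind}\Phi^b(\mF)=\on{RHom}(\IC_0,\mF\star\IC_{\bC[G]})\to\on{RHom}(\IC_0,\phi_f(\mF)\star\IC_{\bC[H]})=\on{Ind}\Phi^b_{\ct}(\phi_f(\mF))$, compatible with the $\on{Sym}(\fg[-2])=\on{Ext}^{\bullet}(\IC_0,\IC_0\star\IC_{\bC[G]})$-module structure, where the target is a module through the algebra map $\on{Sym}(\fg[-2])\to\on{Sym}(\fh[-2])$ that $\phi_f$ induces on these $\on{Ext}$-algebras. I would then identify this algebra map with the one coming from $p$: it is $H$-equivariant; on $G$- resp.\ $H$-invariants it restricts to $\bC[\fc]\to\bC[\fc_H]$ (Lemma~\ref{equ}(3)); on the associated centralizer group schemes it is dual to $\on{Nm}\colon J_G|_{\fc_H}\to J_H$ (Proposition~\ref{cohomology}); and it is nonzero in degree two by surjectivity of equivariant stalks \cite[Lemma 1.1, Lemma 5.8]{Z}. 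Since $\on{Hom}_H(\fg,\fh)=\bC$ for each symmetric pair occurring in Example~\ref{list}, the degree-two component is forced to be $p$, so the algebra map is exactly the one defining $p_*$.

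To conclude, both $\on{Ind}\Phi^b_{\ct}\circ\phi_f$ and $p_*\circ\on{Ind}\Phi^b$ are module functors over the monoidal category $\on{Ind}(D^b_{\check G(\calO)}(\Gr))\cong D^G(\on{Sym}(\fg[-2]))$ --- the second tautologically, the first because the convolution action of $\on{Ind}(D^b_{\check G(\calO)}(\Gr))$ on $\on{Ind}(D^b_{\check G(\calO)^{\ct}}(\Gr^{\ct}))$ is compatible with $\phi_f$ (Zhu's monoidality together with the standard compatibility of nearby cycles with convolution), and the natural transformation above respects these structures since $\phi_f(\IC_{\bC[G]})\to\IC_{\bC[H]}$ is an algebra map. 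The transformation is an isomorphism on the unit $\IC_0$, because $\phi_f(\IC_0)\cong\IC_0$ while $p_*(\on{Sym}(\fg[-2]))\cong\on{Sym}(\fh[-2])$; a morphism of module functors over the regular module that is an isomorphism on the unit is an isomorphism, which gives the claimed commutativity. The main obstacle is carrying this out at the derived (dg) level without circularity, since Zhu's statements are for perverse sheaves and the monoidality of $\Phi^b_{\ct}$ is itself deduced from this Proposition. I would handle it exactly as in the proof of Theorem~\ref{twisted DSat}: restrict attention to the generators $\IC_\lambda$ and transport everything through the fully faithful equivariant cohomology functor (\cite{G1}, \cite[Lemma 4.20]{CMNO}), which reduces the derived assertion to the homology-level identity of Proposition~\ref{cohomology}; the on-the-nose compatibility of $\phi_f$ with convolution can be read off from its identification $\phi^{\check H}_f\cong\psi^{\check H_c}_*\cong\psi^{\check H_c}_!$ with a specialization functor \cite[Lemma 4.8]{CMNO}.
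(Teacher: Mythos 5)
Your proposal is essentially the paper's argument, but dressed in a heavier module-functor formalism and with an unnecessary detour. The paper's proof is short and direct: set $A := \on{Ind}\Phi^b_{\ct}\circ\phi_f\circ(\on{Ind}\Phi^b)^{-1}$, observe that $D^G(\Sym(\fg[-2]))$ is generated by $\Sym(\fg[-2])\otimes\bC[G]$ under the $D(\on{QCoh}(BG))$-module action and continuity, and then simply compute $A(\Sym(\fg[-2])\otimes\bC[G])$ by pushing $\IC_{\bC[G]}$ through Zhu's identification of $\phi_f$ with $\on{Res}^G_H$, landing on $\Sym(\fh[-2])\otimes\on{Res}^G_H\bC[G] \cong p_*(\Sym(\fg[-2])\otimes\bC[G])$. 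That is the whole proof.

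You instead construct a natural transformation $\on{Ind}\Phi^b(\mF)\to\on{Ind}\Phi^b_{\ct}(\phi_f(\mF))$ via the surjection $\on{Res}^G_H\bC[G]\twoheadrightarrow\bC[H]$, identify the induced graded-algebra map $\Sym(\fg[-2])\to\Sym(\fh[-2])$ by a separate argument (norm map on $J$'s, $\on{Hom}_H(\fg,\fh)=\bC$), and then close with the module-functor-iso-on-the-unit device. This is a valid route, and your identification of the algebra map is correct, but it is extra work that the paper avoids entirely by computing the image of the single generator directly; the multiplicity-one step and the degree-two nonvanishing claim (whose attribution to \cite[Lemma 1.1, 5.8]{Z} is a bit loose) are superfluous once you have Zhu's $\phi_f\cong\on{Res}^G_H$ at the level of abelian Satake. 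Two small points to tighten if you keep your version: first, the map on the unit $\Sym(\fg[-2])\to\Sym(\fh[-2])$ is certainly not an isomorphism — what is an isomorphism is the induced map after applying $p_*$, i.e.\ $\Sym(\fh[-2])\otimes_{\Sym(\fg[-2])}\Sym(\fg[-2])\to\Sym(\fh[-2])$, so phrase it that way; second, the module-functor structure on $\on{Ind}\Phi^b_{\ct}\circ\phi_f$ you invoke is really just the $D(\on{QCoh}(BG))$-module structure coming from Zhu's perverse-level monoidality of $\phi_f$, which is the same hidden input the paper uses when it asserts a "functorial" isomorphism on $\Sym(\fg[-2])\otimes\bC[G]$ suffices, so there is no extra circularity to dodge. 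Your final paragraph correctly flags that the dg-level compatibility of $\phi_f$ with convolution is the real technical input, and the handle via $\phi_f^{\check H}\cong\psi^{\check H_c}_*\cong\psi^{\check H_c}_!$ from \cite{CMNO} is the right one; the paper takes this for granted.
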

\begin{proof}
Consider the functor 
$A:=\on{Ind}(\Phi_{\ct}^b)\circ\phi_f\circ\on{Ind}(\Phi^b_{})^{-1}:D^G(\Sym(\fg[-2]))\to D^H(\Sym(\fh[-2]))$.
Since $D^G(\Sym(\fg[-2]))$ is generated by $\Sym(\fg[-2])\otimes\bC[G]$
and $A$ is a continous functor, to show that $A\cong p_*$, it suffices to construct a functorial isomorphism
$A(\Sym(\fg[-2])\otimes\bC[G])\cong p_*(\Sym(\fg[-2])\otimes\bC[G])$.
Note that we have
\[p_*(\Sym(\fg[-2]\otimes\bC[G])\cong\Sym(\fh[-2])\otimes\on{Res}^G_H\bC[G]\]
On the other hand, 
\[A(\Sym(\fg[-2])\otimes\bC[G])\cong
\on{Ind}(\Phi_{\ct}^b)\circ\phi_f\circ\on{Ind}(\Phi^b_{})^{-1}(\Sym(\fg[-2]\otimes\bC[G])\cong\]
\[\cong 
\on{Ind}\Phi_{\ct}^b\circ\phi_f(\IC_0\star\bC[G])\cong
\on{Ind}\Phi_{\ct}^b(\IC_0\star\on{Res}^G_H(\bC[H]))\cong\Sym(\fh[-2])\otimes
\on{Res}^G_H(\bC[H])).\]
Now the desired isomorphism is given by 
\[A(\Sym(\fg[-2])\otimes\bC[G])\cong\Sym(\fh[-2])\otimes
\on{Res}^G_H(\bC[H]))\cong p_*(\Sym(\fg[-2])\otimes
\bC[H]).\]

\end{proof}

\subsection{Purity and formality}
Let $D^{b,mix}_{\check G(\calO)^{\ct}}(\Gr^{\ct})$
be the mixed equivariant derived category.
We say that an object 
$\calF\in D^b_{\check G(\calO)^{\ct}}(\Gr^{\ct})$
is  $!$-pure (resp. $*$-pure) if 
$\calF$ has a lifting 
$\tilde\calF\in D^{b,mix}_{\check G(\calO)^{\ct}}(\Gr^{\ct})$ such that, 
for all spherical orbits
$j_{\lambda}:\check G(\calO)\cdot t^\lambda\subset\Gr$, 
the $m$-th cohomology sheaf
$H^m(j_\lambda^!\tilde\calF)$ 
(resp. $H^m(j_\lambda^*\tilde\calF)$)
is pointwise pure of weight $m$.
An object is called very pure is it is both $!$-pure and $*$-pure.
Note that 
the notion of $!$-purity makes sense for 
ind-objects.
Any very pure object in
$D^b_{\check G(\calO)^{\ct}}(\Gr^{\ct})$
is isomorphic to direct sum of shifted  $\IC_{[\mu]}[m]$. 
We call an ind-object  very pure is it is isomorphic to 
a countable direct sum of shifted $\IC$-complexes.

We have the following important formality properties of $!$-pure objects, extending \cite[Theorem 1.2.2]{G2}
to the twisted setting. To state the results,
it will be convenient to  consider the following variant of the twisted derived Satake equivalences
\[\Psi_{\ct}:\on{Ind}(D_{\check G(\calO)^{\ct}}(\Gr^{\ct}))\cong D^G(\Sym(\fh[-2]))\ \ \ \calF\to \on{R\Gamma}_{\check G(\calO)^{\ct}}(\Gr^{\ct},\calF\otimes^!\bC[H]).\]
Here we viewed $\bC[H]$
as object in $\on{Ind}(D_{\check G(\calO)^{\ct}}(\Gr^{\ct}))$
via the twisted Satake equivalence.
The same argument as in \cite[Section 5(vi)]{BFN2} shows that 
$\Psi_{\ct}\cong (\on{Ad}_{n_0}\circ\on{Chev})_*\circ\Phi_{\ct}$
where $(\on{Ad}_{n_0}\circ\on{Chev})_*$
is the equivalence induced by the 
 involution $\on{Ad}_{n_0}\circ\on{Chev}$
 in Section \ref{symemtric pair}.\footnote{Note that $H$ is stable under $\on{Ad}_{n_0}\circ\on{Chev}$.}
\begin{thm}\label{formality}
Let $\calE\in\on{Ind}(D^b_{\check G(\calO)^{\ct}}(\Gr^{\ct}))$ be a $!$-pure object. 
Then the dg modules 
\[\Psi_{\ct}(\calE)=\on{R\Gamma}_{\check G(\calO)^{\ct}}(\Gr^{\ct},\calE\otimes^!\bC[H])\in D^H(\Sym(\fh[-2]))\] 
\[\on{R\Gamma}_{\check G(\calO)^{\ct}}(\Gr^{\ct},\calE)\in D^{J_H}(\fc_H)\]
are formal, that is, isomorphic to their cohomology 
$\on{H}^*_{\check G(\calO)^{\ct}}(\Gr^{\ct},\calE)$
and $\on{H}^*_{\check G(\calO)^{\ct}}(\Gr^{\ct},\calE\otimes^!\bC[H])$ with trivial differentials.
Moreover,
there is a canonical isomorphism
\begin{equation}\label{Kostant functor}
    \Psi_{\ct}(\calE)\cong\on{H}^*_{\check G(\calO)^{\ct}}(\Gr^{\ct},\calE\otimes^!\bC[H])\cong H^0(j_*)j^*\on{H}^*_{\check G(\calO)^{\ct}}(\Gr^{\ct},\calE\otimes^!\bC[H])\cong
\end{equation}
\[\cong H^0(j_*)(\kappa_{H}^*)^{-1}(\on{H}^*_{\check G(\calO)^{\ct}}(\Gr^{\ct},\calE))\]
Here $j:\fh^{*,reg}\to\fh^*$ is the open embedding and 
$\kappa_{H}^*:\on{QCoh}^H(\fh^{*,reg})\cong \on{QCoh}^{J_H}(\fc_H)$
is the equivalence induces by the Kostant section $\kappa_H:\fc_H\to\fh^{*,reg}$.

\end{thm}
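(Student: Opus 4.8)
The plan is to adapt the argument of \cite[\S6.5]{BF} and \cite{G2} to the twisted setting, using the twisted derived Satake equivalence of Theorem~\ref{twisted DSat} in place of the classical one, and the pointwise purity, parity vanishing, and stalk/costalk statements for twisted $\IC$-sheaves from \cite[Lemma 1.1 and Lemma 5.8]{Z} in place of their $\Gr_{\check G}$-analogues. The statement has two ingredients: (i) formality of the dg-module $\Psi_{\ct}(\calE)=\on{R\Gamma}_{\check G(\calO)^{\ct}}(\Gr^{\ct},\calE\otimes^!\bC[H])$ over $\Sym(\fh[-2])$, and of $\on{R\Gamma}_{\check G(\calO)^{\ct}}(\Gr^{\ct},\calE)$ over $\bC[\fc_H]$ with its $J_H$-action (via the identification $\on{Spec}\on{H}_*^{\check H}(\Gr^{\ct})\cong J_H$ of Proposition~\ref{cohomology}); and (ii) the explicit description~\eqref{Kostant functor} of the cohomology module.

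For (i) I would run a weight argument. Fix a mixed lift $\tilde\calE\in D^{b,mix}_{\check G(\calO)^{\ct}}(\Gr^{\ct})$ witnessing $!$-purity. Since the $!$-restriction of a $!$-tensor product to a smooth stratum is, up to shift and Tate twist, the tensor product of the $!$-restrictions, and $\bC[H]$ is a countable direct sum of shifted very pure $\IC_{[\mu]}$'s, the ind-object $\tilde\calE\otimes^!\bC[H]$ is again $!$-pure. Working $\check T_{\check H}$-equivariantly, so that the $\check I^{\ct}$-orbits give an affine-cell filtration of $\Gr^{\ct}$, the associated Cousin spectral sequence converging to $\on{H}^*_{\check T_{\check H}}(\Gr^{\ct},\tilde\calE\otimes^!\bC[H])$ has $E_1$-page pure with weight equal to total degree (compactly supported equivariant cohomology of an affine cell is pure, with weight equal to degree); its differentials raise total degree by $1$ while preserving weight, so it degenerates, and taking $\rW_{\check H}$-invariants shows $\on{H}^i_{\check G(\calO)^{\ct}}(\Gr^{\ct},\calE\otimes^!\bC[H])$ is pure of weight $i$. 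As $\Sym(\fh[-2])$ is a formal dg-algebra, likewise pure with weight equal to degree, a minimal $A_\infty$-model of the dg-module has operations $m_k$ of cohomological degree $2-k$; for $k\geq 3$ such an operation strictly lowers degree while preserving weight, hence vanishes on a module whose cohomology is pure with weight equal to degree. Thus $\Psi_{\ct}(\calE)$ is formal; the same argument with $\bC[H]$ omitted, over the base ring $\bC[\fc_H]$, gives formality of $\on{R\Gamma}_{\check G(\calO)^{\ct}}(\Gr^{\ct},\calE)\in D^{J_H}(\fc_H)$.

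For (ii), the first isomorphism in~\eqref{Kostant functor} is (i). For the second, I would show the $\Sym(\fh[-2])=\bC[\fh^*[2]]$-module $\on{H}^*_{\check G(\calO)^{\ct}}(\Gr^{\ct},\calE\otimes^!\bC[H])$ satisfies Serre's condition $S_2$: by fully-faithfulness of the equivariant cohomology functor (available from the purity inputs above; cf. \cite[Lemma~4.20]{CMNO} and \cite{G1}) it is built from $\bC[\fc_H]$-free cohomology modules, and then, since $\fh^*\setminus\fh^{*,reg}$ has codimension $\geq 2$ (as used in Theorem~\ref{twisted DSat}), it agrees with $H^0(j_*)j^*$ of itself. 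For the third, I would use the monoidal structure~\eqref{monoidal} on equivariant cohomology together with the Kostant-section identification $\fh^{*,reg}\cong(H\times\fc_H)/J_H$ (Lemma~\ref{J}) --- exactly as in the computation of $\Sym(\fh[-2])$ in the proof of Theorem~\ref{twisted DSat} --- to produce an $H$-equivariant identification $j^*\on{H}^*_{\check G(\calO)^{\ct}}(\Gr^{\ct},\calE\otimes^!\bC[H])\cong(\kappa_H^*)^{-1}\on{H}^*_{\check G(\calO)^{\ct}}(\Gr^{\ct},\calE)$ on $\fh^{*,reg}$. Composing the three gives the formula.

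The main obstacle is the weight-propagation step of (i): establishing, \emph{in the twisted setting}, that $!$-purity of $\tilde\calE$ forces $\on{H}^i_{\check G(\calO)^{\ct}}(\Gr^{\ct},\calE\otimes^!\bC[H])$ to be pure of weight $i$. This rests on the twisted analogues of parity vanishing and of the stalk/costalk behaviour along the $\check I^{\ct}$-stratification from \cite{Z} interacting correctly with the operation $-\otimes^!\bC[H]$ and with the cell structure; the $S_2$-property of the cohomology module needed for the codimension-$2$ step in (ii) is a second, more technical, point.
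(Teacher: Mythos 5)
Your proposal follows essentially the same line as the paper's proof, and I think it is correct. For part (i), you reconstruct by hand (via the Cousin spectral sequence along the $\check I^{\ct}$-stratification and a weight-vs-degree vanishing argument on an $A_\infty$-minimal model) exactly what the paper obtains by citing \cite[Lemma 3.1.5]{BY} for the purity of $\on{H}^i_{\check I^{\ct}}$ and then the general formality machinery of \cite[\S6.5]{BF} (also \cite[\S6.1]{G2}). The observation that very-purity of $\bC[H]$ propagates $!$-purity through $-\otimes^!\bC[H]$ is identical. For part (ii), the paper takes a slightly more economical route: it invokes \cite[Corollary 4.3.7]{G2} once (with the identity map $\Gr^{\ct}\to\Gr^{\ct}$) to get
\[
\on{H}^*_{\check G(\calO)^{\ct}}(\Gr^{\ct},\calE\otimes^!\bC[H])\cong\bigl(\on{H}^*_{\check G(\calO)^{\ct}}(\Gr^{\ct},\calE)\otimes_{\bC[\fc_H]}\bC[\fc_H\times H]\bigr)^{J_H},
\]
and then reads off the whole of~\eqref{Kostant functor} by recognizing the right-hand side as $\Gamma\bigl(\fh^*, H^0(j_*)(\kappa_H^*)^{-1}(\cdots)\bigr)$ via the Kostant--Whittaker descent formula $(\kappa_H^*)^{-1}(-)\cong((-)\otimes_{\bC[\fc_H]}\bC[\fc_H\times H])^{J_H}$ from \cite[Proposition 2.3]{Ga}. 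You instead split~\eqref{Kostant functor} into a Serre-$S_2$ step and a local identification step; that is fine, but the place where your argument is looser is precisely the $S_2$ claim. Being ``built from $\bC[\fc_H]$-free modules'' does not on its own imply $S_2$ as a $\bC[\fh^*]$-module. The cleaner way to justify the $H^0(j_*)j^*$-closedness is to notice (as the paper's chain of isomorphisms makes manifest) that the fully-faithfulness formula \emph{already} exhibits the module as $H^0(j_*)$ of a quasi-coherent sheaf on $\fh^{*,\mathrm{reg}}$, so it is tautologically fixed by $H^0(j_*)j^*$. Similarly, the local identification on $\fh^{*,\mathrm{reg}}$ that you attribute to the Künneth monoidal structure~\eqref{monoidal} (which is for the convolution $\star$) is really the $\otimes^!$-fully-faithfulness formula \`a la \cite{G2}/\cite[Lemma~4.20]{CMNO}; once you use that formula the two remaining isomorphisms in~\eqref{Kostant functor} come out simultaneously. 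These are phrasing issues rather than a gap: with the correct citation of the fully-faithfulness result your argument closes up and matches the paper's.
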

\begin{proof}
Equipped $\Gr^{\ct}=\bigsqcup_{[\mu]\in X_\bullet(\check T)_{\check\theta_0}}\check I^{\ct}\cdot s_{[\mu]}$ with the $\check T_{\check H}$-stable Iwahori stratification.
Then $\calE$ is $!$-pure with respect to the Iwahori stratification and 
hence by \cite[Lemma 3.1.5]{BY} the cohomology group 
$\on{H}^i_{\check I^{\ct}}(\Gr^{\ct},\calF)$ is pure of weight $i$. It follows that 
$\on{H}^i_{\check G(\calO)^{\ct}}(\Gr^{\ct},\calF)$ is pure of weight $i$ and the formality of $\on{R\Gamma}_{\check G(\calO)^{\ct}}(\Gr^{\ct},\calE)$ follows from the general results in \cite[Section 6.5]{BF} (see also \cite[Section 6.1]{G2}).
Since the regular sheaf $\bC[H]$
on $\Gr^{\ct}$ is very pure, the $!$-tensor product 
$\calE\otimes^!\bC[H]$ is $!$-pure and the formality of 
$\on{R\Gamma}_{\check G(\calO)^{\ct}}(\Gr^{\ct},\calE\otimes^!\bC[H])$ follows from the above discussion.
The fully-faithful results in \cite[Corollary 4.3.7]{G2} (in the case identify map $X=Y$) implies that 
\[\on{H}^*_{\check G(\calO)^{\ct}}(\Gr^{\ct},\calE\otimes^!\bC[H])\cong
(\on{H}^*_{\check G(\calO)^{\ct}}(\Gr^{\ct},\calE)\otimes_{\bC[\fc_H]}\on{H}^*_{\check G(\calO)^{\ct}}(\Gr^{\ct},\bC[H]))^{J_H}\cong 
(\on{H}^*_{\check G(\calO)^{\ct}}(\Gr^{\ct},\calE)\otimes_{\bC[\fc_H]}\bC[\fc_H\times H])^{J_H}
.\]
The isomorphism~\eqref{Kostant functor} follows from the formula 
$(\kappa_H^*)^{-1}(-)\cong((-)\otimes_{\bC[\fc_H]}\bC[\fc_H\times H])^{J_H}$
(see, e.g., \cite[Proposition 2.3]{Ga})

\end{proof}

\begin{remark}
    Note that Theorem \ref{formality}
in particular implies that $!$-pure object $\calE$  (or rather, its image $\Psi_{\ct}(\calE)$ under derived Satake)
is completely determined by its cohomology $\on{H}^*_{\check G(\calO)^{\ct}}(\Gr^{\ct},\calE)$.
\end{remark}

\section{Relative derived Satake equivalence}\label{TRS}
\subsection{Ring objects
arising from  loop symmetric spaces
}\label{ring objects}
We will call the quotient 
$\check X_{\ct}(F)=\check G(F)/\check G(F)^{\ct}$ 
the loop symmetric space.

\begin{example}\label{list 3}
(1) If $\check\theta_0=\on{id}$, then we have 
$\check X_{\ct}(F)\cong\check G(F)/\check G(F')=\check G(\bC((t)))/\check G(\bC((t^2)))$.

(2) 
Here is a list of $\check X_{\ct}(F)$
for non-trivial $\check\theta_0$ :
\[SL_{n}(F)/SU_{n}(F'),\ Spin_{2n}(F)/Spin_{2n}^{(2)}(F'),\ 
 E_6^{sc}(F)/E^{sc,(2)}_6(F').\]
\end{example}

We denote by 
$D_{\check G(\calO)}(\check X_{\ct}(F))$ the co-complete dg category of $\check G(\calO)$-equivariant constructible complexes on $\check X_{\ct}(F)$. We denote by 
$D_{\check G(\calO)}(\check X_{\ct}(F))^c$ and $D^b_{\check G(\calO)}(\check X_{\ct}(F))$ the 
subcategory of locally compact and compact objects,
and $\on{Ind}(D^b_{\check G(\calO)}(\check X_{\ct}(F)))$ the ind-completion.
We have a natural Hecke action of 
$D_{\check G(\calO)}(\Gr)$ on $D_{\check G(\calO)}(\check X_{\ct}(F))$ given by the convolution
\[\calF\star\calM=m_!(\calF\tilde\boxtimes\calM)\]
where $\calF\tilde\boxtimes\calM\in D_{\check G(\calO)}(\check G(F)\times^{\check G(\calO)}\check X_{\ct}(F))$
is the twisted product of $\calF$ and $\calM$, that is, 
$p^!(\calF\tilde\boxtimes\calM)\cong q^!(\calF\boxtimes\calM)\in D_{\check G(\calO)}(\check G(F)\times^{}\check X_{\ct}(F))$
where $p,q$ are the natural projections.
We have similar defined Hecke actions for other versions of categories of sheaves on loop symmetric spaces.
In \cite{CY2}, we will study in details the geometry of 
spherical and Iwahori orbits on $\check X_{\ct}(F)$
and the category $D_{\check G(\calO)}(\check X_{\ct}(F))$
extending our previous work \cite{CY1}.

Consider the closed $\check G(\calO)$-orbit
$\check X_{\ct}(\calO)_e=\check G(\calO)\cdot e\subset \check X_{\ct}(F)$ through the base point $e\in\check X_{\ct}(F)$. 
Let $\omega_e\in D^b_{\check G(\calO)}(\check X_{\ct}(F))$
be the dualizing sheaf on $\check X_{\ct}(\calO)_e$.
As explained in \cite[Section 8.1]{BZSV},
associated to $\omega_e$, there is a commutative ring object 
\[\calA_{\check G,\ct}=\underline{\on{Hom}}_{\on{Ind}(D^b_{\check G(\calO)}(\Gr))}(\omega_e)\in\on{Ind}(D^b_{\check G(\calO)}(\Gr))\]
as the internal endomorphism of 
$\omega_e$ with respect to the Hecke action. 
We have the following more explicit construction 
of the ring object $\calA_{\check G,\ct}$:
consider the scheme $Z_{\check G,\ct}$,
an analogy of variety of triples in \cite{BFN1},
defined by the 
the following cartesian square
\[\xymatrix{Z_{\check G,\ct}\ar[r]\ar@/^2pc/[rr]^{\pi}\ar[d]&\check G(F)\times^{\check G(\calO)}\check X_{\ct}(\calO)_e\ar[r]_{pr_1}\ar[d]^{a}&\Gr=\check G(F)/\check G(\calO)\\
\check X_{\ct}(\calO)_e\ar[r]^i&\check X_{\ct}(F)&&}\]
where $pr_1$ is the projection map, $a$
is the action map, and $i$ is the embedding.
Then one can check that there is an isomorphism 
$\pi_{*}\omega_{Z_{\check G,\ct}}\cong\calA_{\check G,\ct}$.

It follows from the definition that the image of $\calA_{\check G,\ct}$ under the derived Satake equivalence is the de-equivarintized dg algebra 
\[\Psi(\calA_{\check G,\ct})\cong\on{R\Gamma}_{\check G(\calO)}(\Gr,\calA_{\check G,\ct}\otimes^!\bC[\check G])\cong\on{RHom}_{\on{Ind}(D^b_{\check G(\calO)}(\check X_{\ct}(F)))}(\omega_{e},\omega_{e}\star\bC[G]).\]
We have the following alternative construction of 
$\Psi(\calA_{\ct})$ in terms of ring objects in the  derived Satake category for $\Gr^{\ct}$.
Consider  the functor 
\begin{equation}\label{rho}
    \check\rho^!:\on{Ind}(D^b_{\check G(\calO)}(\Gr))\stackrel{\on{For}}\to 
\on{Ind}(D^b_{\check G(\calO)^{\ct}}(\Gr))\stackrel{\check\rho^!}\to 
\on{Ind}(D^b_{\check G(\calO)^{\ct}}(\Gr^{\ct}))
\end{equation}
induced by the embedding 
$\check\rho:\Gr^{\ct}\to\Gr$.

\begin{lemma}\label{lax monoidal}
    There is a canonical lax monoidal structure on $\check\rho^!$.
In particular, 
$\check\rho^!$ sends ring objects (resp. commutative ring objects)
in $\on{Ind}(D^b_{\check G(\calO)}(\Gr))$
to ring objects (resp. commutative ring objects) in 
$\on{Ind}(D^b_{\check G(\calO)^{\ct}}(\Gr^{\ct}))$.
\end{lemma}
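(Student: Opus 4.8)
The plan is to construct the lax monoidal structure on $\check\rho^{!}$ geometrically, from the compatibility of the closed embedding $\check\rho:\Gr^{\ct}\to\Gr$ with the convolution diagrams, and then to check the coherence axioms; compatibility with the commutativity constraints will be the delicate point. Write $\frakC^{\ct}=\check G(F)^{\ct}\times^{\check G(\calO)^{\ct}}\Gr^{\ct}$ and $\frakC=\check G(F)\times^{\check G(\calO)}\Gr$ for the twisted and untwisted convolution Grassmannians, with the ind-proper multiplication maps $m^{\ct}:\frakC^{\ct}\to\Gr^{\ct}$ and $m:\frakC\to\Gr$ defining the convolution products $\star$. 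The inclusions $\check G(F)^{\ct}\hookrightarrow\check G(F)$, $\check G(\calO)^{\ct}\hookrightarrow\check G(\calO)$ together with $\check\rho$ induce a morphism $\widetilde\rho:\frakC^{\ct}\to\frakC$ fitting into a commutative square
\[
\check\rho\circ m^{\ct}=m\circ\widetilde\rho .
\]

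First I would verify that $\check\rho^{!}$ is compatible with the twisted external products: for $\calF_{1},\calF_{2}\in\on{Ind}(D^{b}_{\check G(\calO)}(\Gr))$ there is a natural isomorphism $\widetilde\rho^{!}(\calF_{1}\tilde\boxtimes\calF_{2})\cong\check\rho^{!}\calF_{1}\tilde\boxtimes\check\rho^{!}\calF_{2}$. This is routine: pulling back along the smooth ($\check G(\calO)$- and $\check G(\calO)^{\ct}$-)torsor projections $\check G(F)\times\Gr\to\frakC$ and $\check G(F)^{\ct}\times\Gr^{\ct}\to\frakC^{\ct}$ reduces the claim, by descent, to the evident identity for ordinary external products together with the relations $p\circ(\mathrm{incl})=\check\rho\circ p^{\ct}$ between the projections $p,p^{\ct}$ to the two affine Grassmannians, plus a check that the $\check G(\calO)^{\ct}$-equivariant structures match. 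Next, the lax structure map is produced by base change along the square: using the adjunctions $(\check\rho_{!},\check\rho^{!})$ and $(\widetilde\rho_{!},\widetilde\rho^{!})$ and the identity $\check\rho m^{\ct}=m\widetilde\rho$, the composite of the unit of $(\check\rho_{!},\check\rho^{!})$ with the counit of $(\widetilde\rho_{!},\widetilde\rho^{!})$ is a natural transformation $m^{\ct}_{!}\widetilde\rho^{!}\Rightarrow\check\rho^{!}m_{!}$; combined with the previous step and the ind-properness identities $m_{!}\cong m_{*}$, $m^{\ct}_{!}\cong m^{\ct}_{*}$ this gives
\[
\check\rho^{!}\calF_{1}\star\check\rho^{!}\calF_{2}=m^{\ct}_{!}\bigl(\check\rho^{!}\calF_{1}\tilde\boxtimes\check\rho^{!}\calF_{2}\bigr)\cong m^{\ct}_{!}\widetilde\rho^{!}(\calF_{1}\tilde\boxtimes\calF_{2})\longrightarrow\check\rho^{!}m_{!}(\calF_{1}\tilde\boxtimes\calF_{2})=\check\rho^{!}(\calF_{1}\star\calF_{2}).
\]
The unit constraint $\mathbbm 1_{\Gr^{\ct}}\to\check\rho^{!}\mathbbm 1_{\Gr}$ is an isomorphism, since the monoidal units are the skyscrapers at the base points $e_{0}\in\Gr$, $e_{0}^{\ct}\in\Gr^{\ct}$ and $\check\rho^{-1}(e_{0})=\{e_{0}^{\ct}\}$.

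It then remains to check the coherence axioms. Associativity follows from the standard compatibility of these base-change $2$-morphisms with composition, applied to the cube obtained from the triple convolution diagram; this part is formal. The main obstacle is the compatibility of the lax structure with the commutativity constraints on the two Satake categories: these constraints are defined through the fusion (Beilinson--Drinfeld) formalism, so one must show that $\check\rho$ is the restriction of a morphism of factorization Grassmannians compatible with factorization, which can be extracted from Zhu's construction of the twisted Satake category and the degeneration $\widetilde\Gr\to\bbA^{1}$ of \cite{Z}. Granting this, $\check\rho^{!}$ is lax symmetric monoidal, and since a lax (symmetric) monoidal functor carries (commutative) algebra objects to (commutative) algebra objects, the assertion of the lemma follows. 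Alternatively, the whole statement can be deduced from the general formalism of \cite{G2} for the embedding $\check\rho$ in view of Lemma \ref{S1-S3}.
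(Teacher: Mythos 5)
Your proposal is correct, and it takes exactly the ``algebraic'' route that the paper's own Remark immediately after the lemma flags as an alternative: you work with the convolution Grassmannians $\frakC^{\ct}\to\Gr^{\ct}$, $\frakC\to\Gr$ and the closed embedding $\widetilde\rho$ between them, whereas the paper's proof transfers everything to the topological models $\Omega\check G_c^{\ct}\hookrightarrow\Omega\check G_c$ under Gram--Schmidt, where the convolution product becomes the pushforward along genuine group multiplication $m$. In both cases the lax structure map is the same kind of exchange $2$-morphism attached to a commutative square (unit of the $!$-adjunction for the bottom arrow composed with the counit for the top arrow; in the paper this is phrased as coming from $\on{id}\to m^{!}m_{!}$), and both treatments of the unit constraint are essentially identical. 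What your version buys is that it stays entirely inside the algebraic category and makes explicit the step of identifying $\widetilde\rho^{!}$ of a twisted external product (where one should be slightly careful that the two torsor projections to the convolution Grassmannians are for the different groups $\check G(\calO)$ and $\check G(\calO)^{\ct}$, so the relevant square is not Cartesian -- but the factorization of $\check G(F)^{\ct}\times\Gr^{\ct}\to\frakC^{\ct}\to\frakC$ through $\check G(F)\times\Gr$ still gives the desired identification). You also explicitly raise the compatibility with the commutativity constraint via fusion, which the paper's proof leaves implicit; that is a genuine point and your outline of a solution via Zhu's factorization/degeneration picture is the right one. The closing suggestion that the statement ``can be deduced from the general formalism of \cite{G2}'' is doubtful -- \cite{G2} gives fully-faithfulness and purity results for $\check\rho^{!}$ under the conditions checked in Lemma \ref{S1-S3}, not a lax symmetric monoidal structure -- but since you flag it only as an alternative, it does not affect the correctness of the argument.
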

\begin{proof}
Consider the following diagram
\[\xymatrix{\Omega\check G^{\ct}_c\times \Omega \check G_c^{\ct}\ar[r]^{\check\rho\times\check\rho}\ar[d]^m&\Omega \check G_c\times \Omega\check G_c\ar[d]^m\\
\Omega\check G_c^{\ct}\ar[r]^{\check\rho}&\Omega \check G_c}.\]
The adjunction map 
$\on{id}\to m^!m_!$ gives rise to a 
map 
$m_!(\check\rho^!\times\check\rho^!)\to\check\rho^!m_!$ and the lax monoidal strucure
\[\check\rho^!\calF\star\check\rho^!\calF'\cong m_!\rho^!\times\check\rho^!(\calF\boxtimes\calF')\to\check\rho^!m_!(\calF\boxtimes\calF')\cong\check\rho^!(\calF\star\calF')\]
where $\calF,\calF'\in D^b_{G(\calO)}(\Gr)\cong D^b_{G_c}(\Omega\check G_c)$.
\end{proof}

\begin{remark}
    One can also adapt the argument in \cite{M} to
    give an algebraic proof 
    without using loop groups $\Omega\check G_c$ and $\Omega\check G^{\ct}_c$.
\end{remark}

The lemma above implies that $\check\rho^!\bC[G]
\in\on{Ind}(D^b_{\check G(\calO)^{\ct}}(\Gr^{\ct}))$
is a commutative ring object.

\begin{lemma}\label{ext algebra}
There is an isomorphism of commutative dg algebras
\[\Psi(\calA_{\check G,\ct})\cong \on{RHom}_{\on{Ind}(D^b_{\check G(\calO)}(\check X_{\ct}(F)))}(\omega_{e},\omega_{e}\star\bC[G])\cong\on{R\Gamma}_{\check G(\calO)^{\ct}}(\Gr^{\ct},\check\rho^!\bC[G])\]
\end{lemma}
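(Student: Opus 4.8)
The plan is to unwind both sides and identify the two de-equivariantized algebras via a base-change computation. First I would recall that $\calA_{\check G,\ct} = \underline{\on{Hom}}_{\on{Ind}(D^b_{\check G(\calO)}(\Gr))}(\omega_e)$ is by construction the internal endomorphism object of $\omega_e$ for the Hecke action, so the first isomorphism $\Psi(\calA_{\check G,\ct})\cong\on{RHom}(\omega_e,\omega_e\star\bC[G])$ is essentially the definition of the de-equivariantization functor $\Psi(-)\cong\on{R\Gamma}_{\check G(\calO)}(\Gr,-\otimes^!\bC[\check G])$ together with the adjunction between $\underline{\on{Hom}}$ and the Hecke action (this is exactly the displayed formula right before the statement). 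So the content is in the second isomorphism $\on{R\Gamma}_{\check G(\calO)^{\ct}}(\Gr^{\ct},\check\rho^!\bC[G])\cong\on{RHom}_{\on{Ind}(D^b_{\check G(\calO)}(\check X_{\ct}(F)))}(\omega_e,\omega_e\star\bC[G])$.

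Next I would compute $\on{RHom}(\omega_e,\omega_e\star\calF)$ for $\calF\in\on{Ind}(D^b_{\check G(\calO)}(\Gr))$ via the variety of triples $Z_{\check G,\ct}$ and its map $\pi\colon Z_{\check G,\ct}\to\Gr$ introduced above, using $\pi_*\omega_{Z_{\check G,\ct}}\cong\calA_{\check G,\ct}$. By the defining Cartesian square, $Z_{\check G,\ct}$ is the preimage in $\check G(F)\times^{\check G(\calO)}\check X_{\ct}(\calO)_e$ of the closed orbit $\check X_{\ct}(\calO)_e$ under the action map; since $\check X_{\ct}(\calO)_e=\check G(\calO)\cdot e$ and the stabilizer of $e$ is $\check G(\calO)^{\ct}=\check G(\calO)\cap\check G(F)^{\ct}$, the orbit through $e$ inside $\Gr$ — i.e. the image of $Z_{\check G,\ct}$ — is precisely $\check\rho(\Gr^{\ct})\subset\Gr$, and $Z_{\check G,\ct}$ fibers over it with fibers modeled on $\check G(\calO)/\check G(\calO)^{\ct}$. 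Concretely, $Z_{\check G,\ct}$ is the $\check G(\calO)$-bundle over $\Gr^{\ct}$ pulled back from $\check G(\calO)\to\check G(\calO)/\check G(\calO)^{\ct}$, so taking $\check G(\calO)$-equivariant sections of $\pi_*\omega\otimes^!\bC[\check G]$ descends the computation onto $\Gr^{\ct}$ with its $\check G(\calO)^{\ct}$-equivariance: $\on{R\Gamma}_{\check G(\calO)}(\Gr,\pi_*\omega_{Z_{\check G,\ct}}\otimes^!\bC[\check G])\cong\on{R\Gamma}_{\check G(\calO)^{\ct}}(\Gr^{\ct},\check\rho^!\bC[\check G])$, where the appearance of $\check\rho^!$ comes from the base change $\check\rho\colon\Gr^{\ct}\hookrightarrow\Gr$ identifying $\omega_{\Gr^{\ct}}=\check\rho^!\omega_\Gr$ and the projection formula for the $!$-tensor with the regular sheaf. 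The compatibility of the Hecke action on $\omega_e\star(-)$ with $\check\rho^!$ of the Hecke action on $\Gr$ is exactly the lax monoidal structure of Lemma \ref{lax monoidal}, which guarantees the resulting isomorphism is one of algebras, and commutativity is inherited from that of $\calA_{\check G,\ct}$ (equivalently from commutativity of $\check\rho^!\bC[\check G]$).

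The main obstacle will be making precise the descent step along the pro-smooth $\check G(\calO)$-bundle $Z_{\check G,\ct}\to\Gr^{\ct}$ in the ind-pro setting — i.e. checking that taking $\check G(\calO)$-invariants of $\pi_*\omega_{Z_{\check G,\ct}}$ genuinely computes $\check G(\calO)^{\ct}$-equivariant cohomology on $\Gr^{\ct}$, with all the derived/equivariant structures intact — together with tracking the $!$-tensor with $\bC[\check G]$ through this identification so that it becomes a $!$-tensor with $\check\rho^!$ of the regular sheaf (this uses that the regular sheaf is defined via the Satake equivalence and is compatible under $\check\rho^!$, which is where Lemma \ref{lax monoidal} does the real work). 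Once this bookkeeping is in place, the algebra-isomorphism statement is formal: both sides are the de-equivariantized Ext-algebra of the unit-like object $\omega_e$, expressed once on $\Gr$ and once on $\Gr^{\ct}$, and the identification of the underlying objects upgrades to rings because every map in sight is induced by the monoidal (resp. lax monoidal) structures.
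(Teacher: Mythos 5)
Your argument and the paper's proof share the same geometric content: both reduce to adjunction along the closed embedding $\tilde i\colon\check G(\calO)\backslash\check X_{\ct}(\calO)_e\hookrightarrow\check G(\calO)\backslash\check X_{\ct}(F)$ together with proper base change along a Cartesian square whose fiber is $\check G(\calO)^{\ct}\backslash\check G(F)^{\ct}/\check G(\calO)^{\ct}$. The difference is organizational. The paper executes this as a short chain of isomorphisms between sheaves on explicit quotient stacks, where the Cartesianness is visible by hand; you route through the un-quotiented $Z_{\check G,\ct}$ and then try to descend $\check G(\calO)$-equivariantly. Your version is the same computation written upstairs and is morally fine; the "descent" you flag as the main obstacle is precisely what the quotient-stack formulation renders immediate, so it is worth adopting that framing.

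Two imprecisions worth correcting. From the defining Cartesian square one finds $Z_{\check G,\ct}\cong\check G(\calO)\check G(F)^{\ct}/\check G(\calO)^{\ct}\subset\check G(F)/\check G(\calO)^{\ct}$; thus the image of $\pi$ is the $\check G(\calO)$-saturation $\check G(\calO)\cdot\check\rho(\Gr^{\ct})$, not $\check\rho(\Gr^{\ct})$ itself, and $Z_{\check G,\ct}$ is not a $\check G(\calO)$-bundle over $\Gr^{\ct}$---its $\check G(\calO)$-quotient is the stack $\check G(\calO)^{\ct}\backslash\Gr^{\ct}$, which is exactly why the answer lands in $\check G(\calO)^{\ct}$-equivariant cohomology on $\Gr^{\ct}$. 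Second, Lemma \ref{lax monoidal} does not drive the identification of the underlying complex: that step is a projection formula ($\omega_{Z_{\check G,\ct}}$ is the $\otimes^!$-unit, so $\pi_*\omega_{Z_{\check G,\ct}}\otimes^!\bC[G]\cong\pi_*\pi^!\bC[G]$) followed by base change. The lax monoidality of $\check\rho^!$ is what you said in your first mention of it---it endows $\check\rho^!\bC[G]$ with a commutative ring structure, hence makes the conclusion an isomorphism of dg algebras, but it is not where the geometric identification happens.
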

\begin{proof}
Consider following diagram
\[\xymatrix{\check G(\calO)^{\ct}\backslash \check G(F)^{\ct}/\check G(\calO)^{\ct}\ar[r]
\ar@/^2pc/[rr]^{\tilde q}\ar[d]^{i}\ar@/_5pc/[dd]_{\check\rho}&\check G(\calO)^{\ct}\backslash \check G(F)^{\ct}/\check G(F)^{\ct}\ar[r]\ar[d]&\check G(\calO)\backslash \check X_{\ct}(\calO)_e\ar[d]^{\tilde i}\\
\check G(\calO)\backslash \check G(F)/\check G(\calO)^{\ct}\ar[r]\ar[d]^p\ar@/_2pc/[rr]_{q}&\check G(\calO)\backslash\check G(F)/\check G(F)^{\ct}\ar[r]&\check G(\calO)\backslash \check X_{\ct}(F)\\
\check G(\calO)\backslash\check G(F)/\check G(\calO)
}\]
where the squares are cartesian.
Then we have 
\[\Psi(\calA_{\check G,\ct})\cong\on{RHom}_{\on{Ind}(D^b_{\check G(\calO)}(\check X_{\ct}(F)))}(\omega_{e},\omega_{e}\star\bC[G])\is \on{RHom}_{\on{Ind}(D^b_{\check G(\calO)}(\check X_{\ct}(F)))}(\omega_{e},q_*p^!\bC[G])\is\]
\[\is\on{RHom}_{\on{Ind}(D^b_{\check G(\calO)}(\check X_{\ct}(\calO)))}(\omega_{e},\tilde i^!q_*p^!\bC[G])
\is
\on{RHom}_{\on{Ind}(D^b_{\check G(\calO)}(\check X_{\ct}(\calO)))}(\omega_{e},\tilde q_*\check\rho^!\bC[G])\is
\]
\[
\is \on{R\Gamma}_{\check G(\calO)^{\ct}}(\Gr^{\ct},\check\rho^!\bC[G])\]
    
\end{proof}

Note that $\bC[G]$ is a very pure object.
It follows that 
 $\check\rho^!\bC[G]$ is 
$!$-pure, and 
Theorem \ref{formality} implies that the commutative dg algebra 
$\on{R\Gamma}_{\check G(\calO)^{\ct}}(\Gr^{\ct},\check\rho^!\bC[G])$ is formal.
Together with Lemma \ref{ext algebra}, we see that $\Psi(\calA_{\check G,\ct})$ is formal, and there is an isomorphism of commutative
algebras
\[\Psi(\calA_{\check G,\ct})\cong\on{H}^*_{\check G(\calO)}(\Gr,\calA_{\check G,\ct}\otimes^!\bC[\check G])\cong\on{H}^*_{\check G(\calO)^{\ct}}(\Gr^{\ct},\check\rho^!\bC[G]).\]

\begin{definition}\label{dual variety}
Following \cite{BFN2,BZSV}, we 
define the relative dual of 
the loop symmetric space $\check X_{\ct}(F)=\check G(F)/\check G(F)^{\ct}$ to be the affine scheme 
$M_{\check G,\check X}$ with coordinate ring
\[\bC[M_{\check G,\ct}]=\Psi(\calA_{\check G,\ct})\cong\on{H}^*_{\check G(\calO)}(\Gr,\calA_{\check G,\ct}\otimes^!\bC[\check G])\cong\on{H}^*_{\check G(\calO)^{\ct}}(\Gr^{\ct},\check\rho^!\bC[G])\]

\end{definition}

Our next goal to compute 
the coordinate ring
$\bC[M_{\check G,\ct}]$.
To this end, we will apply the 
results of Ginzburg \cite[Theorem 1.1.1 and Corollary 4.3.7]{G2} to 
compute 
$\on{H}^*_{\check G(\calO)^{\ct}}(\Gr^{\ct},\check\rho^!\calF)$
for any very pure object $\calF\in\on{Ind}(D^b_{\check G(\calO)}(\Gr))$.
Here is the key observation: an analogy of  \cite[Lemma 6.3.1]{G2}
for the embedding 
$\check\rho:\Gr^{\ct}\to\Gr$:
\begin{lemma}\label{S1-S3}
 Consider the $\check T_{\check H}$-stable 
 stratification 
 $\Gr=\bigsqcup_{\lambda\in X_\bullet(\check T)}\check I\cdot t^\lambda$ given by Iwahori orbits. We have

 (1) 
 For any $\lambda\in X_\bullet(\check T)$, we have $\Gr^{\ct}\cap\check\rho^{-1}(\check I\cdot t^\lambda)=\check I^{\ct}\cdot s_{[\mu]}$ if $\lambda=\mu+\check\theta_0(\mu)$
 for some $[\mu]\in X_\bullet(\check T)_{\check\theta_0}$ (see~\eqref{Kottwitz})
 , otherwise $\Gr^{\ct}\cap\check\rho^{-1}(\check I\cdot t^\lambda)=\emptyset$.

 (2) Each Iwahori orbit $I\cdot t^\lambda$ is 
 $\check T_{\check H}$-equivariantly
 isomorphic to a vector space and 
 for any $[\mu]\in X_\bullet(\check T)$, the image of the embedding $\check\rho|_{\check I^{\ct}\cdot s_{[\mu]}}:\check I^{\ct}\cdot s_{[\mu]}\to \check I\cdot t^\lambda$
 is a vector subspace.

 (3) We have $\Gr^{\check T_{\check H}}=\bigsqcup_{\lambda\in X_\bullet(\check T)} \{t^\lambda\}$

 (4) 
For every $\lambda\in X_\bullet(\check T)$, 
 there is a $\check T_{\check H}$-stable 
 Zariski open subset $U_\lambda\subset\Gr$ with a $\mathbb G_m$-action that commutes with the $\check T_{\check H}$-action and contracts $U_\lambda$ to $t^\lambda$.
\end{lemma}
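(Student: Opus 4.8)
The four assertions are precisely what is needed to place the embedding $\check\rho\colon\Gr^{\ct}\hookrightarrow\Gr$, together with the torus $\check T_{\check H}$, into the framework of \cite[Section 1.1]{G2} (hypotheses (S1)--(S3)), after which~\eqref{intro:twisted Ginzburg} will follow from \emph{loc.\ cit.}; the argument runs parallel to \cite[Lemma 6.3.1]{G2}. Assertion (3) is immediate: by Example~\ref{list 2} the torus $\check T_{\check H}$ contains elements regular semisimple in $\check G$, so $\Gr^{\check T_{\check H}}=\Gr^{\check T}=\check T(F)/\check T(\calO)=\{t^\lambda\}_{\lambda\in X_\bullet(\check T)}$, exactly as in the proof of Lemma~\ref{commutativity}(1). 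For (1): since $\check\rho$ is equivariant for $\check I^{\ct}\subset\check I$ and $\check\rho(s_{[\mu]})=t^{\mu+\check\theta_0(\mu)}$ by~\eqref{Kottwitz}, each orbit $\check I^{\ct}\cdot s_{[\mu]}$ is carried into $\check I\cdot t^{\mu+\check\theta_0(\mu)}$; combining the disjointness of Iwahori orbits on $\Gr$ with the decomposition $\Gr^{\ct}=\bigsqcup_{[\mu]\in X_\bullet(\check T)_{\check\theta}}\check I^{\ct}\cdot s_{[\mu]}$ gives $\Gr^{\ct}\cap\check\rho^{-1}(\check I\cdot t^\lambda)=\bigsqcup\{\check I^{\ct}\cdot s_{[\mu]}\mid\mu+\check\theta_0(\mu)=\lambda\}$, and this index set has at most one element because $[\mu]\mapsto\mu+\check\theta_0(\mu)$ is injective on $X_\bullet(\check T)_{\check\theta}$ (after $\otimes\bbQ$ it identifies with twice the canonical isomorphism of co-invariants with invariants, and $X_\bullet(\check T)_{\check\theta}$ is torsion free since $\check G$ is simply connected).

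For (4) I would take $U_\lambda=t^\lambda\cdot\check G_1[t^{-1}]\cdot\check G(\calO)/\check G(\calO)$, the $t^\lambda$-translate of the open big cell, where $\check G_1[t^{-1}]=\ker(\check G[t^{-1}]\to\check G)$; this is Zariski open in $\Gr$, contains $t^\lambda$, and is stable under left translation by $\check T\supset\check T_{\check H}$. I would let $\bbG_m$ act on $\Gr$ by $s\cdot\gamma(t)=\lambda(s)\,\gamma(s^{-1}t)$ (loop rotation twisted by the cocharacter $\lambda$); a direct computation gives $s\cdot(t^\lambda g(t)\check G(\calO))=t^\lambda g(s^{-1}t)\check G(\calO)$ for $g\in\check G_1[t^{-1}]$, which shows that this action fixes $t^\lambda$, stabilises $U_\lambda$, and contracts $U_\lambda$ to $t^\lambda$ as $s\to0$ with strictly positive weights; it commutes with the $\check T_{\check H}$-action since $\lambda(s)\in\check T$ and loop rotation commutes with left $\check T$-translation, and (because $\lambda$ is $\check\theta_0$-fixed) it also commutes with $\check\theta$, so it preserves $\Gr^{\ct}$. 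This is the standard construction of a contracting chart, cf.\ \cite{G1}.

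The substance is in (2). Recall the affine root group picture of Iwahori orbits: writing $\check I=\check T\cdot\check I^u$ with $\check I^u$ the pro-unipotent radical, presented as an ordered product of affine root subgroups $U_\beta$, there is for each $\lambda$ a finite set $S_\lambda$ of positive affine roots with $\check I\cdot t^\lambda\cong\prod_{\beta\in S_\lambda}U_\beta$ as a variety, $\check T$ (hence $\check T_{\check H}$) acting linearly through the finite parts $\bar\beta$; this gives the first clause of (2). The involution $\check\theta$ preserves $\check I^u$ and permutes the $U_\beta$ --- carrying $\beta=\bar\beta+n\delta$ to $\check\theta_0(\bar\beta)+n\delta$ up to a sign coming from $\check\theta_0$ and the substitution $t\mapsto-t$ --- and $S_\lambda$ is $\check\theta$-stable because $\lambda$ is $\check\theta_0$-fixed; choosing the product decomposition compatibly with the $\check\theta$-orbits on $S_\lambda$, the image of $\check\rho|_{\check I^{\ct}\cdot s_{[\mu]}}$ in $\check I\cdot t^\lambda$ is the $\check\theta$-fixed locus in $\prod_{\beta\in S_\lambda}U_\beta$, which one wants to recognise as a $\check T_{\check H}$-stable vector subspace. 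The delicate point --- and the one I expect to be the main obstacle --- is that $\check I^u$ is non-abelian, so that a priori $\check\theta$ acts on $\prod_{\beta\in S_\lambda}U_\beta$ by a polynomial rather than linear automorphism, the higher-order terms arising from commutators $[U_\beta,U_{\beta'}]$; this must be absorbed by choosing a $\check\theta$-equivariant coordinate system on the $\check T_{\check H}$-stable affine paving, using the combinatorics of affine roots under $\check\theta$ that already governs \cite{PR,Z}. Granting this, the remaining compatibilities --- $\bbG_m$-equivariance of $\check\rho$ for the actions of (4) and the contraction of $\check I^{\ct}\cdot s_{[\mu]}$ to $s_{[\mu]}$ --- are immediate, and (2) follows.
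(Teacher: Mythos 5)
Your treatment of (1), (3), and (4) matches the paper's argument. For (3) both you and the paper invoke that $\check T_{\check H}$ contains $\check G$-regular semisimple elements (Example \ref{list 2}); for (4) both use the (translated) big open cell, cf.\ \cite[Lemma 5.8]{Z}, and your explicit $\bbG_m$-action $s\cdot\gamma(t)=\lambda(s)\gamma(s^{-1}t)$ is the standard contracting chart the paper has in mind. For (1) you supply slightly more than the paper does: you make explicit that $[\mu]\mapsto\mu+\check\theta_0(\mu)$ is injective via the torsion-freeness of $X_\bullet(\check T)_{\check\theta}$, while the paper treats this as packaged in the Kottwitz isomorphism~\eqref{Kottwitz}. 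One small caveat in (4): you remark that the action ``(because $\lambda$ is $\check\theta_0$-fixed) also commutes with $\check\theta$''; this parenthetical is unnecessary for the statement (which only concerns $U_\lambda\subset\Gr$), and it is simply false for a $\lambda$ that is not $\check\theta_0$-fixed --- though by (1) those $\lambda$ contribute nothing to $\Gr^{\ct}$.

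The real issue is (2), and you have correctly diagnosed where the substance lies but not closed the gap. You parametrize $\check I\cdot t^\lambda$ as an ordered product of affine root groups and observe --- correctly --- that because $\check I^u$ is non-abelian, the involution $\check\theta$ a priori acts by a \emph{polynomial}, not linear, automorphism on this chart, with correction terms from commutators; you then write ``this must be absorbed by choosing a $\check\theta$-equivariant coordinate system \dots\ granting this, (2) follows.'' That is exactly the missing step, and deferring it to the combinatorics in \cite{PR,Z} is not a proof. The paper avoids the problem entirely by parametrizing the orbit through the Lie algebra: letting $\check I^u_\lambda\subset\check I^u$ be the stabilizer of $t^\lambda$, the exponential $v\mapsto\exp(v)\cdot t^\lambda$ gives a $\check T_{\check H}$-equivariant identification of $\check I\cdot t^\lambda$ with (a complement realizing) $\Lie(\check I^u/\check I^u_\lambda)$; since $\check\theta$ acts on the Lie algebra by a genuine linear involution and $\exp$ intertwines the two actions, the image of $\check\rho|_{\check I^{\ct}\cdot s_{[\mu]}}$ is the $\check\theta$-fixed linear subspace $\Lie(\check I^u/\check I^u_\lambda)^{\ct}$, with no commutator corrections to absorb. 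In short: the paper replaces your product-of-root-groups chart by the exponential chart, and that change of coordinates is precisely what linearizes $\check\theta$ and yields the vector-subspace conclusion.
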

\begin{proof}
    (1) $\Gr^{\ct}\cap\check\rho^{-1}(\check I\cdot t^\lambda)$
    is $\check I^{\ct}$-stable, hence a union of $\check I^{\ct}$-orbits on $\Gr^{\ct}$.
    If  $\check I^{\ct}\cdot s_{[\mu]}\subset\Gr^{\ct}\cap\check\rho^{-1}(\check I\cdot t^\lambda)$
    be such an orbit, then 
    $\check\rho(s_{[\mu]})=t^{\mu+\check\theta_0(\mu)}\in\check I\cdot t^\lambda$ and this force $\lambda=\mu+\check\theta_0(\mu)$. Part (1) follows.

    (2) Let $\check I^u$ be the pro-unipotent radical of $\check I$
    and $\check I^u_\lambda$ the stabilizer of $t^\lambda$ in $\check I^u$.
    Then we have a $\check T_{\check H}$-equivariant isomorphism 
    $\Lie(\check I^u/\check I_\lambda^u)\cong\check I\cdot t^\lambda$, $v\to \on{exp}(v)\cdot t^\lambda$. Moreover, the image 
    $\check\rho(\check I^{\ct}\cdot s_{[\mu]})\subset\check I\cdot t^\lambda\cong\Lie(\check I^u/\check I_\lambda^u)$ is equal to the subspace $\Lie(\check I^u/\check I_\lambda^u)^{\ct}$. Part (2) follows.

    (3) It follows from the fact that 
    $\check T_{\check H}$ contains regular semisimple elements, see Example \ref{list 2}.

(4) It follows from existence of big open cell in $\Gr$ (see, e.g., \cite[Lemma 5.8]{Z}).

\end{proof}

\begin{thm}\label{full-faithfulness}
 Let $\calF\in\on{Ind}(D^b_{\check G(\calO)}(\Gr))$ be a very pure object and $\calE\in\on{Ind}(D^b_{\check G(\calO)^{\ct}}(\Gr^{\ct}))$ be a $!$-pure object.
Then there is a natural isomorphism
\begin{equation}\label{twisted Ginzburg}
    \on{H}^*_{\check G(\calO)^{\ct}}(\Gr^{\ct},\calE\otimes^!\check\rho^!\calF)
    \cong (\on{H}^*_{\check G(\calO)^{\ct}}(\Gr^{\ct},\calE)\otimes_{\bC[\fc_H]}\on{H}^*_{\check G(\calO)^{\ct}}(\Gr,\calF))^{J_G|_{\fc_H}}.
\end{equation}
Here $J_G|_{\fc_H}$ acts on 
the tensor product through the natural action on $\on{H}^*_{\check G(\calO)^{\ct}}(\Gr,\calF)$
and the pull-back action of $J_H$ on $\on{H}^*_{\check G(\calO)^{\ct}}(\Gr^{\ct},\calE)$ via the norm map
$\on{Nm}:J_G|_{\fc_H}\to J_H$.
If $\calF$ and $\calE$ are ring objects then~\eqref{twisted Ginzburg} is a ring isomorphism.

\end{thm}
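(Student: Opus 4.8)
The plan is to deduce the statement from the general machinery of \cite[Section 1.1, Theorem 1.1.1, Corollary 4.3.7]{G2}, applied to the closed embedding $\check\rho\colon\Gr^{\ct}\to\Gr$, whose hypotheses (S1)--(S3) of \emph{loc. cit.} have already been verified in Lemma \ref{S1-S3}. Before invoking it, I would fix the dictionary of group schemes. Since $\check G(\calO)^{\ct}$ is homotopy equivalent to $\check H$, one has $\on{H}^*_{\check G(\calO)^{\ct}}(pt)\cong\bC[\fc_H]$ and $\on{H}^*_{\check G(\calO)^{\ct}}(\Gr^{\ct})\cong U(\Lie J_H)$, while $\on{H}^*_{\check G(\calO)^{\ct}}(\Gr)\cong\on{H}^*_{\check G}(\Gr)\otimes_{\bC[\fc]}\bC[\fc_H]\cong U(\Lie(J_G|_{\fc_H}))$, the last isomorphism because $\on{H}^*_{\check G}(\Gr)\cong U(\Lie J_G)$ is free over $\bC[\fc]$. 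Dually, $\on{Spec}\on{H}_*^{\check G(\calO)^{\ct}}(\Gr)\cong J_G|_{\fc_H}$ and $\on{Spec}\on{H}_*^{\check G(\calO)^{\ct}}(\Gr^{\ct})\cong J_H$, and by Proposition \ref{cohomology}(2) the homology map induced by the group homomorphism $\check\rho\colon(\Omega\check G_c)^{\ct}\to\Omega\check G_c$ is exactly the norm map $\on{Nm}\colon J_G|_{\fc_H}\to J_H$ of~\eqref{norm}. This is the input that identifies the group scheme and its action in the abstract formula with those in the statement.

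Next I would apply \cite[Corollary 4.3.7]{G2} to the correspondence $\Gr^{\ct}\xleftarrow{\ \on{id}\ }\Gr^{\ct}\xrightarrow{\ \check\rho\ }\Gr$, feeding in the very pure ambient object $\calF$ and the $!$-pure object $\calE$ on $\Gr^{\ct}$. As in the proof of Theorem \ref{formality}, $\check\rho^!\calF$ is $!$-pure --- its $!$-stalk at $s_{[\mu]}$ is the $!$-stalk of $\calF$ at $t^{\mu+\check\theta_0(\mu)}$ by Lemma \ref{S1-S3}(1) --- hence so is $\calE\otimes^!\check\rho^!\calF$, and all of the $\on{R\Gamma}$-complexes involved are formal by Theorem \ref{formality}. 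Ginzburg's corollary then produces precisely the natural isomorphism~\eqref{twisted Ginzburg}, in which $J_G|_{\fc_H}=\on{Spec}\on{H}_*^{\check G(\calO)^{\ct}}(\Gr)$ acts on the factor $\on{H}^*_{\check G(\calO)^{\ct}}(\Gr,\calF)$ through the standard action of $\on{Spec}$ of equivariant homology on equivariant cohomology with coefficients \cite{G1,YZ}, and on the factor $\on{H}^*_{\check G(\calO)^{\ct}}(\Gr^{\ct},\calE)$ through the composite $J_G|_{\fc_H}\xrightarrow{\ \on{Nm}\ }J_H=\on{Spec}\on{H}_*^{\check G(\calO)^{\ct}}(\Gr^{\ct})$ and the $J_H$-action on the latter.

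For the ring statement, suppose $\calF$ and $\calE$ are ring objects. Then $\check\rho^!\calF$ is a ring object by the lax monoidal structure on $\check\rho^!$ of Lemma \ref{lax monoidal}, so $\calE\otimes^!\check\rho^!\calF$ acquires an algebra structure, and it remains to check that~\eqref{twisted Ginzburg} is multiplicative. I would do this by tracing the products on both sides to their geometric origin: the lax monoidal constraint for $\check\rho^!$ is assembled from the square relating the multiplication $m$ on $\Omega\check G_c$ to that on $(\Omega\check G_c)^{\ct}$ (the proof of Lemma \ref{lax monoidal}), and this is the same square that defines the comultiplications on $\on{H}_*^{\check G(\calO)^{\ct}}(\Gr)$ and on $\on{H}_*^{\check G(\calO)^{\ct}}(\Gr^{\ct})$, hence the two $\on{Spec}$-actions above; so the product on the left-hand side corresponds to the componentwise product on $\on{H}^*_{\check G(\calO)^{\ct}}(\Gr^{\ct},\calE)\otimes_{\bC[\fc_H]}\on{H}^*_{\check G(\calO)^{\ct}}(\Gr,\calF)$ after passing to $J_G|_{\fc_H}$-invariants.

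The main obstacle is bookkeeping: one must match the abstract ``specialization'' morphism of group schemes produced by the framework of \cite{G2} with the concrete norm map $\on{Nm}$, which is exactly the content of Proposition \ref{cohomology}(2), and --- for the ring assertion --- carefully track how the lax monoidal constraint of $\check\rho^!$ interacts with the Hopf algebra structures on $\on{H}_*^{\check G(\calO)^{\ct}}(\Gr)$ and $\on{H}_*^{\check G(\calO)^{\ct}}(\Gr^{\ct})$ under the derived Satake equivalence. All the geometric facts needed beyond \cite{G2} are packaged in Lemma \ref{S1-S3}, Proposition \ref{cohomology}, and Lemma \ref{lax monoidal}, so modulo this bookkeeping the argument is a direct appeal to \emph{loc. cit.}
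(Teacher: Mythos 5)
Your proof takes essentially the same route as the paper: invoke Lemma \ref{S1-S3} to verify conditions (S1)--(S3) of \cite[Section 1.1]{G2}, apply \cite[Corollary 4.3.7]{G2} to the embedding $\check\rho\colon\Gr^{\ct}\to\Gr$, and use Proposition \ref{cohomology}(2) to identify the abstract specialization map with the norm map $\on{Nm}\colon J_G|_{\fc_H}\to J_H$. Your write-up fills in the group-scheme bookkeeping and the ring-structure tracing more explicitly than the paper's rather terse proof, but the skeleton is identical.
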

\begin{proof}

 Lemma \ref{S1-S3} implies that 
the $\check T_{\check H}$-stable 
 stratification 
 $\Gr=\bigsqcup_{\lambda\in X_\bullet(\check T)}\check I\cdot t^\lambda$ satisfies the condition (S1-S3)
 in \cite[Section 1.1]{G2}, and the claim  follows \cite[Corollary 4.3.7]{G2} 
 in the case of the embedding 
$\check\rho:\Gr^{\ct}\to\Gr$ and the formula 
in Proposition \ref{cohomology} (2)
\end{proof}

\begin{remark}
In fact, since the $\check T_{\check H}$ fixed points $\Gr^{\check T_{\check H}}=\bigsqcup_{\lambda\in X_\bullet(\check T)} \{t^\lambda\}$ are isolated, one can follow the strategy in \cite{G1,BY,CMNO} to give an alternative proof of Theorem \ref{full-faithfulness} (compare \cite[Remark 1.1.3]{G2}).
\end{remark}

\begin{thm}\label{dual construction}

(1) The complex  $\check\rho^!\bC[G]$ is equivariantly formal, that is, the spectral sequence
\[\on{H}^p_{\check G(\calO)^{\ct}}(\on{pt},\on{H}^q(\Gr^{\ct},\rho^!\bC[G]))\Longrightarrow\on{H}^{p+q}_{\check G(\calO)^{\ct}}(\Gr^{\ct},\rho^!\bC[G])\]
degenerate at $E_2$.

(2)
There is a  $G$-equivariant isomorphism
\[\bC[M_{\check G,\ct}]\cong\Psi(\calA_{\ct})\cong\on{H}^*_{\check G(\calO)^{\ct}}(\Gr^{\ct},\check\rho^!\bC[G])\cong \bC[T^*X[2]]\]
of $\Sym(\fg[-2])$-algebras.

(3)
There is a  $G$-equivariant isomorphism
\[
\on{H}^*_{}(\Gr^{\ct},\check\rho^!\bC[G])\cong \bC[G\times^K\calN_{\fp^*}].\]

\end{thm}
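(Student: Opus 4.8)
For part~(1), I would first use that the regular perverse sheaf $\bC[G]$ on $\Gr$ is very pure, so that $\check\rho^!\bC[G]$ is $!$-pure (as already observed after Lemma~\ref{ext algebra}). Equipping $\Gr^{\ct}$ with its $\check T_{\check H}$-stable Iwahori stratification and running the weight argument from the proof of Theorem~\ref{formality} (via \cite[Lemma~3.1.5]{BY}), one obtains that $\on{H}^i_{\check I^{\ct}}(\Gr^{\ct},\check\rho^!\bC[G])$, and hence $\on{H}^i_{\check G(\calO)^{\ct}}(\Gr^{\ct},\check\rho^!\bC[G])$ and the non-equivariant $\on{H}^i(\Gr^{\ct},\check\rho^!\bC[G])$, is pure of weight $i$. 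The differentials in the spectral sequence of the statement are then strictly compatible with weights, so they vanish and the sequence degenerates at $E_2$.

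For part~(2), I would begin from Lemma~\ref{ext algebra}, which gives $\bC[M_{\check G,\ct}]\cong\on{H}^*_{\check G(\calO)^{\ct}}(\Gr^{\ct},\check\rho^!\bC[G])$, together with the fact that $\omega_{\Gr^{\ct}}$ is the monoidal unit for $\otimes^!$ and is $!$-pure (its $!$-restriction to any Iwahori orbit is the dualizing sheaf of an affine space), so that $\check\rho^!\bC[G]\cong\omega_{\Gr^{\ct}}\otimes^!\check\rho^!\bC[G]$. Applying Theorem~\ref{full-faithfulness} with $\calF=\bC[G]$ and $\calE=\omega_{\Gr^{\ct}}$ produces a ring isomorphism
\[
\bC[M_{\check G,\ct}]\cong\bigl(\on{H}_*^{\check H}(\Gr^{\ct})\otimes_{\bC[\fc_H]}\on{H}^*_{\check G(\calO)^{\ct}}(\Gr,\bC[G])\bigr)^{J_G|_{\fc_H}}.
\]
By Proposition~\ref{cohomology}(1), $\on{H}_*^{\check H}(\Gr^{\ct})\cong\bC[J_H]$, on which $J_G|_{\fc_H}$ acts through the norm map $\Nm$ of~\eqref{norm} followed by the regular action of $J_H$. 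Running the same computation as in the proof of Theorem~\ref{twisted DSat} (pointwise purity of $\IC$-complexes, fully-faithfulness of the equivariant cohomology functor, Lemma~\ref{J}(3)) and base changing along the closed embedding $\fc_H\hookrightarrow\fc$ of Lemma~\ref{equ}(3) identifies $\on{H}^*_{\check G(\calO)^{\ct}}(\Gr,\bC[G])\cong\bC[G\times\fc_H]$, with the Satake $G$ acting by left translation, with $J_G|_{\fc_H}=\on{Spec}(\on{H}_*^{\check H}(\Gr))$ acting by right translation through the Kostant-section embedding $J_G|_{\fc_H}\hookrightarrow G\times\fc_H$ of Lemma~\ref{J}(3), and with the grading as there.

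Substituting, $\bC[M_{\check G,\ct}]\cong\bC[G\times J_H]^{J_G|_{\fc_H}}$, and the $J_G|_{\fc_H}$-action on $G\times J_H$ appearing here --- through the Kostant section on the $G$-factor and through $\Nm$ on the $J_H$-factor --- is exactly the one used to define $\overline{(G\times J_H)/J_G|_{\fc_H}}$ in Section~\ref{contangent bundle}. Proposition~\ref{Wh=T^*X} then delivers a $G$-equivariant isomorphism $\bC[M_{\check G,\ct}]\cong\bC[T^*X]$ compatible with the moment maps, and tracking the $\bG_m$-grading through Lemma~\ref{section}(5) and Lemma~\ref{J}(3) --- the square of the fiberwise scaling on $T^*X$ --- promotes it to an isomorphism of graded $\Sym(\fg[-2])$-algebras $\bC[M_{\check G,\ct}]\cong\bC[T^*X[2]]$. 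I expect the main obstacle to be exactly this last matching: keeping track, through Theorem~\ref{full-faithfulness} and the computation of $\on{H}^*_{\check G(\calO)^{\ct}}(\Gr,\IC_{\bC[G]})$, of the full $G\times J_G$-equivariant and graded structure, and verifying that the $J_G|_{\fc_H}$-action one obtains (originating from the homology coalgebra $\on{H}_*^{\check H}(\Gr)\cong\bC[J_G|_{\fc_H}]$) is precisely the Kostant-section translation action that appears on the $\bC[T^*X]$-side in Section~\ref{contangent bundle} --- this is what makes Proposition~\ref{Wh=T^*X} directly applicable.

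For part~(3), I would combine part~(1) with equivariant formality to write $\on{H}^*(\Gr^{\ct},\check\rho^!\bC[G])\cong\bC[M_{\check G,\ct}]\otimes_{\on{H}^*_{\check G(\calO)^{\ct}}(\on{pt})}\bC$, and then observe that under the isomorphism of part~(2) the ring $\on{H}^*_{\check G(\calO)^{\ct}}(\on{pt})\cong\bC[\fc_H]$ acts on $\bC[T^*X]$ through $G$-invariants, i.e.\ through the adjoint quotient $T^*X\to T^*X//G\cong\fc_X\cong\fc_H$. Writing $T^*X\cong G\times^K\fp^*$ and $\fc_X=\fp^*//K$, the fiber over $0\in\fc_H$ is $G\times^K\chi_X^{-1}(0)=G\times^K\calN_{\fp^*}$, the scheme-theoretic fiber being reduced by Kostant--Rallis flatness \cite{KR}; this yields the claimed $G$-equivariant isomorphism.
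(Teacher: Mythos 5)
Your proof of parts (2) and (3) is essentially identical to the paper's: apply Theorem~\ref{full-faithfulness} with $\calE=\omega_{\Gr^{\ct}}$ and $\calF=\bC[G]$, identify the result with $\bC[J_H\times_{\fc_H}G\times\fc_H]^{J_G|_{\fc_H}}$ using Proposition~\ref{cohomology} and Lemma~\ref{J}, and then invoke Proposition~\ref{Wh=T^*X}; for (3), use equivariant formality and the faithful flatness of $T^*X\to\fc_X$ with fiber $G\times^K\calN_{\fp^*}$ over $0$. The extra care you devote to tracking the $\bG_m$-grading and the $J_G|_{\fc_H}$-action is welcome but is implicitly covered by Proposition~\ref{Wh=T^*X} and Lemma~\ref{J}(3)/Lemma~\ref{section}(5) as you note.

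For part (1), your argument is genuinely different from the paper's. The paper uses the \emph{parity} of the costalks: since $\bC[G]$ satisfies parity vanishing, $H^j(i^!_{[\mu]}\check\rho^!\bC[G])$ vanishes unless $j\equiv\dim(\check G(\calO)^{\ct}\cdot s_{[\mu]})\pmod 2$, and then cites \cite[Proposition~2.1.1 and Remark~2.1.2]{M} for the degeneration of the spectral sequence. You instead argue via the \emph{weight} formalism: $\check\rho^!\bC[G]$ is $!$-pure, so by \cite[Lemma~3.1.5]{BY} the cohomology groups are pure of weight equal to degree; since the differentials in the Leray spectral sequence are morphisms of mixed structures and shift weight by one, they vanish. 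Both routes are standard and valid here; the parity argument is more elementary (it avoids the mixed formalism entirely and works with plain constructible sheaves), while your weight argument connects more transparently to the $!$-purity framework already set up in Theorem~\ref{formality}. One small imprecision: your phrase ``and hence\ldots the non-equivariant $\on{H}^i(\Gr^{\ct},\check\rho^!\bC[G])$ is pure'' should not be read as a consequence of the equivariant purity — rather, both purities follow independently from the pointwise $!$-purity of $\check\rho^!\bC[G]$ via \cite[Lemma~3.1.5]{BY}; as written it risks circularity with the very spectral sequence you are degenerating. Likewise, the remark about reducedness of the scheme-theoretic fiber in (3) is a side issue: what matters is flatness of $\chi_X$, which is the content of \cite{KR}; the identification $T^*X\times_{\fc_X}\{0\}\cong G\times^K\calN_{\fp^*}$ is then scheme-theoretic by definition.
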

\begin{proof}
(1) 
Note that the parity vanishing of  $\bC[G]$ implies $H^j(i^!_{[\mu]}\check\rho^!\bC[G])\cong H^j(i^!_\lambda\bC[G])=0$
for $j$ not congruent to $\langle\mu,2\rho_G\rangle=\on{dim}(\check G(\calO)^{\ct}\cdot s_{[\mu]})$.
Since $\check\rho^!\bC[G]$ is constructible with 
the Iwahori stratification, 
the claim  follows from the  observation in \cite[Proposition 2.1.1 and Remark 2.1.2]{M}.

(2) Applying Theorem \ref{full-faithfulness} to the case $\calE=\omega$ and $\calF=\bC[G]$, we obtain
\[
\bC[M_{\check G,\ct}]\cong\on{H}^*_{\check G(\calO)^{\ct}}(\Gr^{\ct},\check\rho^!\bC[G])\cong
(\on{H}^*_{\check G(\calO)^{\ct}}(\Gr^{\ct},\omega)\otimes_{\bC[\fc_H]}\on{H}^*_{\check G(\calO)^{\ct}}(\Gr,\bC[G]))^{J_G|_{\fc_H}}\cong\]
\[\cong\bC[J_H\times_{\fc_H}G\times\fc_H]^{J_G|_{\fc_H}}
\cong\bC[T^*X]\]
where the last isomorphism is Theorem \ref{Wh=T^*X}.

(3) 
Note that the quotient map $T^*X\to T^*X/G\cong (G\times^K\fp^*)/G\cong\fp^*/K\cong\fc_X$
is faithfully flat and 
$T^*X\times_{\fc_X}\{0\}\cong G\times^K\calN_{\fp^*}$.
Now part (1) and (2) implies
\[\on{H}^*_{}(\Gr^{\ct},\check\rho^!\bC[G])\cong 
\on{H}^*_{\check G(\calO)^{\ct}}(\Gr^{\ct},\check\rho^!\bC[G])\otimes_{\on{H}^*_{\check G(\calO)^{\ct}}(\on{pt})}\bC\cong\bC[T^*X\times_{\fc_X}\{0\}]\cong\bC[G\times^K\calN_{\fp^*}].\]

\end{proof}

\begin{remark}
Since the symmetric nilptent cone $\calN_{\fp^*}$
in general is not normal, e.g., the case $X=PGL_2/PO_2$, we have 
$\calN_{\fp^*}\cong\{xy=0\}\subset\bC^2$, it follows that 
$\on{H}^*_{}(\Gr^{\ct},\check\rho^!\bC[G])\cong\bC[G\times^K\calN_{\fp^*}]$ is not a normal ring in general.
This also implies that the analogy of Theorem \ref{full-faithfulness} is not true in the non-equivariant setting
(This is compatible with the comments after \cite[Corollary 1.1.2]{G2}).
Indeed, in the case
above the non-equivariant analogy of the right hand side of \eqref{twisted Ginzburg} will give you
the ring of functions $\bC[\calN_{\fp^{*,r}}] \cong\bC[x^{\pm1}]\times\bC[y^{\pm1}]$
on the regular nilpotent orbit $\calN_{\fp^{*,r}}=\calN_{\fp^*}\setminus\{0\}$, which is not the same as $\bC[\calN_{\fp^*}]$.
\end{remark}

\subsection{Functorial kernels}
We shall give a spectral description 
of the functor 
$\check\rho^!$ under the derived Satake equivalence.
Consider the image 
$\Psi_{\ct}(\check\rho^!\bC[G])
=\on{R\Gamma}_{\check G(\calO)^{\ct}}(\Gr^{\ct},\check\rho^!\bC[G]\otimes^!\bC[H])\in D^H(\Sym(h[-2]))$ of $\check\rho^!\bC[G]$
under the derived Satake equivalence.
Recall the scheme 
$T^*X\times H/J_H$ 
in Section \ref{contangent bundle}.

\begin{thm}\label{functoriality}
(1) $\Psi_{\ct}(\check\rho^!\bC[G])$ is formal and there is a $G\times H$-equivariant isomorphism 
\[\Psi_{\ct}(\check\rho^!\bC[G])\cong\bC[T^*X[2]\times H/J_H]\]
of $\Sym((\fg\times\fh)[-2])$-algebras.

 (2) The following diagram commutes:
    \[\xymatrix{\on{Ind}(D^b_{\check G(\calO)}(\Gr))\ar[r]^{\check\rho^!}\ar[d]^{\simeq}&\on{Ind}(D^b_{\check G(\calO)^{\ct}}(\Gr^{\ct}))\ar[d]^{\simeq}\\
D^G(\Sym(\fg[-2]))\ar[r]&D^H(\Sym(\fh[-2]))}\]
where the lower horizontal arrow is the functor
\[(k[(T^*X[2]\times H)/J_H]\otimes_{\Sym(\fg[-2])}(-))^G:D^G(\Sym(\fg[-2]))\to D^H(\Sym(\fh[-2]))\]
\end{thm}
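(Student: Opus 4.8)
For part~(1), the plan is to combine Theorem~\ref{full-faithfulness} with the affine-closure description of $T^*X$ from Section~\ref{contangent bundle}, running the argument of Theorem~\ref{dual construction}(2) but with the $!$-tensor unit $\omega$ replaced by the regular object $\bC[H]$. As noted just before Definition~\ref{dual variety}, $\check\rho^!\bC[G]$ is $!$-pure, so $\check\rho^!\bC[G]\otimes^!\bC[H]$ is $!$-pure and Theorem~\ref{formality} gives that $\Psi_{\ct}(\check\rho^!\bC[G])=\on{R\Gamma}_{\check G(\calO)^{\ct}}(\Gr^{\ct},\check\rho^!\bC[G]\otimes^!\bC[H])$ is formal and equals its cohomology. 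I would then apply Theorem~\ref{full-faithfulness} with $\calE=\bC[H]$ (very pure, hence $!$-pure, in $\on{Ind}(D^b_{\check G(\calO)^{\ct}}(\Gr^{\ct}))$) and $\calF=\bC[G]$ (very pure in $\on{Ind}(D^b_{\check G(\calO)}(\Gr))$) to get
\[\Psi_{\ct}(\check\rho^!\bC[G])\;\cong\;\bigl(\on{H}^*_{\check G(\calO)^{\ct}}(\Gr^{\ct},\bC[H])\otimes_{\bC[\fc_H]}\on{H}^*_{\check G(\calO)^{\ct}}(\Gr,\bC[G])\bigr)^{J_G|_{\fc_H}}.\]
The next step is to name the two factors. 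Extracting the de-equivariantized computation inside the proof of Theorem~\ref{twisted DSat} together with Proposition~\ref{cohomology}(1) gives $\on{H}^*_{\check G(\calO)^{\ct}}(\Gr^{\ct},\bC[H])\cong\bC[\fc_H\times H]$, with $J_H$ acting through the Kostant--Rallis section $\kappa_H$; restricting the computation of $\on{H}^*_{\check G(\calO)}(\Gr,\bC[G])$ from~\cite{ABG,BF} along $\fc_H\hookrightarrow\fc$ and using Proposition~\ref{cohomology} gives $\on{H}^*_{\check G(\calO)^{\ct}}(\Gr,\bC[G])\cong\bC[G\times\fc_H]$, with $J_G|_{\fc_H}$ acting through the Kostant section $\kappa_G$. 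Since in Theorem~\ref{full-faithfulness} the group $J_G|_{\fc_H}$ acts on the first factor via $\on{Nm}\colon J_G|_{\fc_H}\to J_H$, the right-hand side is precisely $\bC[(G\times H\times\fc_H)/J_G|_{\fc_H}]$ of Section~\ref{contangent bundle}, whose affine closure Corollary~\ref{kernel} identifies with $\overline{(T^*X\times H)/J_H}$; hence $\Psi_{\ct}(\check\rho^!\bC[G])\cong\bC[T^*X[2]\times H/J_H]$. Finally I would check that the resulting isomorphism is one of $\Sym((\fg\times\fh)[-2])$-algebras and is $G\times H$-equivariant: the grading and $[2]$-shift are controlled by the $\bG_m$-action attached to the principal normal $\mathfrak{sl}_2$-triple $\phi$ of Lemma~\ref{section}(5), and the $G$- and $H$-equivariance come from the residual $G$-action on $\bC[G]$ and $H$-action on $\bC[H]$, which pass to the tautological $G\times H$-action on $(T^*X\times H)/J_H$ under Corollary~\ref{kernel}.

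For part~(2), I would observe that both functors in the square preserve colimits --- the spectral one by inspection, and $\check\rho^!$ because it is the ind-extension of a functor on the bounded (compact) categories --- so it suffices to produce a natural isomorphism on a generating family, and $\on{Ind}(D^b_{\check G(\calO)}(\Gr))$ is generated under colimits by the very pure objects $\IC_\mu$ (equivalently, by $\bC[G]$). For very pure $\calF$ the object $\check\rho^!\calF$ is again $!$-pure, so $\Psi_{\ct}(\check\rho^!\calF)$ is formal, and Theorem~\ref{full-faithfulness} with $\calE=\bC[H]$ gives
\[\Psi_{\ct}(\check\rho^!\calF)\;\cong\;\bigl(\bC[\fc_H\times H]\otimes_{\bC[\fc_H]}\on{H}^*_{\check G(\calO)^{\ct}}(\Gr,\calF)\bigr)^{J_G|_{\fc_H}}.\]
Identifying $\on{H}^*_{\check G(\calO)^{\ct}}(\Gr,\calF)$ --- the $\check H$-equivariant cohomology with its $\bC[\fc_H]$-module structure and $J_G|_{\fc_H}$-action --- with the restriction along the Kostant section of the derived-Satake image $\calM$ of $\calF$, and using part~(1) to recognise the operation $\calN\mapsto(\bC[\fc_H\times H]\otimes_{\bC[\fc_H]}\calN)^{J_G|_{\fc_H}}$ as $\calM\mapsto(\bC[T^*X[2]\times H/J_H]\otimes_{\Sym(\fg[-2])}\calM)^G$, the right-hand side becomes the asserted ``tensor with kernel'' functor applied to $\calM$. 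To upgrade this to an identification of functors I would build the comparison transformation from the lax monoidal structure on $\check\rho^!$ (Lemma~\ref{lax monoidal}): the $\bC[G]$-module structure on $\calF$ is sent to the $\check\rho^!\bC[G]$-module structure on $\check\rho^!\calF$, which under $\Psi_{\ct}$ and part~(1) is precisely the module structure over the kernel; one checks this coincides with the comparison isomorphism of Theorem~\ref{full-faithfulness} and is natural in $\calF$, then extends by colimits. Compatibility with the monoidal action of derived Satake for $\check G$ is then formal.

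The main obstacle is not the object-level identifications --- those are Theorem~\ref{full-faithfulness} plus Corollary~\ref{kernel} --- but the bookkeeping around them. In part~(1) this means verifying that the single isomorphism produced by Theorem~\ref{full-faithfulness} is simultaneously an algebra map, $G\times H$-equivariant, and correctly graded with the $[2]$-shift, and pinning down the two auxiliary rings as $\bC[\fc_H\times H]$ and $\bC[G\times\fc_H]$ with exactly the stated $J$-actions --- each step forcing one to unwind the (equivariant, and in the twisted case $\check\theta$-twisted) Satake equivalences. In part~(2) the delicate point is promoting the object-wise isomorphisms to a genuinely \emph{natural} transformation and checking it is the one induced by the lax monoidal structure on $\check\rho^!$; this is also where one must take care to use $\check\rho^!$ rather than $\check\rho^*$ throughout, so that $!$-purity and hence formality survive, and to track the module structures over the regular algebras $\bC[G]$ and $\bC[H]$ compatibly.
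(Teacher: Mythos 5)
Your proof of part~(1) matches the paper's: you apply Theorem~\ref{full-faithfulness} with $\calE=\bC[H]$ and $\calF=\bC[G]$ to rewrite $\Psi_{\ct}(\check\rho^!\bC[G])$ as the $J_G|_{\fc_H}$-invariants of a tensor product, identify the two factors as $\bC[\fc_H\times H]$ and $\bC[G\times\fc_H]$, and then invoke Corollary~\ref{kernel} to recognise the answer as $\bC[\overline{(T^*X\times H)/J_H}]$ — exactly what the paper does (the paper's intermediate display writes $\bC[G\times H\times\fc]$, but $\fc_H$ is meant, as your version correctly has). For part~(2) you arrive at the same endpoint but by a slightly different route. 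The paper simply reduces the natural-isomorphism check to the single generator $\Sym(\fg[-2])\otimes\bC[G]$ by the continuity argument already used in Proposition~\ref{Comp}, and then part~(1) finishes it in one line; you propose instead to work with arbitrary very pure $\calF$, obtain the object-level identification from Theorem~\ref{full-faithfulness}, and then upgrade to a natural transformation using the lax monoidal structure of Lemma~\ref{lax monoidal}. Your version is somewhat more careful about naturality — it explains where the comparison map comes from rather than asserting that an isomorphism at the generator suffices — at the cost of an extra identification (realising $\on{H}^*_{\check G(\calO)^{\ct}}(\Gr,\calF)$ as the Kostant-section restriction of $\Psi(\calF)$) which the paper's shorter argument avoids. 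Both are legitimate; the paper's is closer to the minimal required argument given the framework already set up in Proposition~\ref{Comp}.
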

\begin{proof}
(1) 
Note that 
 $\bC[H]$ and $\bC[G]$ is very pure 
and 
 $\check\rho^!\bC[G]$ is  $!$-pure  
and hence  Theorem \ref{full-faithfulness} implies 
$\Psi_{\ct}(\check\rho^!\bC[G])\cong\on{R\Gamma}_{\check G(\calO)^{\ct}}(\Gr^{\ct},\check\rho^!\bC[G]\otimes^!\bC[H])$ is formal and there is an isomorphism
\begin{equation}\label{part 1}
    \Psi_{\ct}(\check\rho^!\bC[G])\cong\on{H}^*_{\check G(\calO)^{\ct}}(\Gr^{\ct},\check\rho^!\bC[G]\otimes^!\bC[H])
    \cong (\on{H}^*_{\check G(\calO)^{\ct}}(\Gr^{\ct},\bC[H])\otimes_{\bC[\fc_H]}\on{H}^*_{\check G(\calO)^{\ct}}(\Gr,\bC[G]))^{J_G|_{\fc_H}}\cong
\end{equation}
    \[\cong\bC[G\times H\times\fc]^{J_G|_{\fc_H}}\cong\bC[T^*X\times H/J_H]^{}.\]
 where  the last isomorphism is Corollary \ref{kernel}. 

(2)
We shall
construct a functorial isomorphism 
\[\Psi_{\ct}\circ\check\rho^!\circ\Psi^{-1}(V)\cong (k[T^*X\times H/J_H]\otimes_{\Sym(\fg[-2])}(V))^G\]
for any $V\in D^G(\Sym(\fg[-2]))$.
The same argument as in the proof of Proposition 
\ref{Comp} shows that it suffices to construct such isomorphism when $V=\Sym(\fg[-2])\otimes\bC[G])$.
But it follows from the following canonical isomorphisms
 \[(k[T^*X\times H/J_H]\otimes_{\Sym(\fg[-2])}(\Sym(\fg[-2])\otimes\bC[G]))^G\cong
k[T^*X\times H/J_H]\otimes\bC[G]))^G\cong k[T^*X\times H/J_H]\]
and 
\[\Psi_{\ct}\circ\check\rho^!\circ\Psi^{-1}(\Sym(\fg[-1])\otimes\bC[G])\cong\Psi_{\ct}(\check\rho^!\bC[G])\cong k[T^*X\times H/J_H]\]
where the last isomorphism is~\eqref{part 1}.

\end{proof}

\subsection{Derived Satake for loop symmetric spaces}\label{coherent sheaves}
Denote by
$D^{K}(\on{Sym}(\fp[-2]))$ be the monoidal dg-category of  $K$-equivariant dg-modules 
over the dg-algebra $\on{Sym}(\fp[-2])$ (equipped with trivial differential) with monoidal structure given by (derived) tensor prodcut.
We denote by 
$D^{K}(\on{Sym}(\fp[-2]))^c=D^{K}_{\on{perf}}(\on{Sym}(\fp[-2]))$
the full subcategory compact objects 
consisting of perfect modules.
Denote by $D^{K}(\on{Sym}(\fp[-2]))_{\mathcal N_{\fp^*}}$
and  $D^{K}_{\on{perf}}(\on{Sym}(\fp[-2]))_{\calN_{\fp^*}}$
the full subcategory of 
consisting of modules that are set theoretically supported on the symmetric nilpotent 
 cone $\calN_{\fp^*}$.
 Note that restriction the 
 zero section
 $\fp^*\to T^*X\cong G\times^K\fp^*$
induces an equivalence 
$\on{QCoh}^G(T^*X[2])\cong D^K_{\on{}}(\Sym(\fp[-2]))$
(resp. $\on{Perf}^G(T^*X[2])\cong D^K_{\on{perf}}(\Sym(\fp[-2]))$)
we denote by 
$\on{QCoh}^G(T^*X[2])_{\calN_{\fp^*}}$
(resp. $\on{Perf}^G(T^*X[2])_{\calN_{\fp^*}}$) the corresponding full subcategories.

We denote by 
$\on{Ind}(D^b_{\check G(\calO)}(\check X_{\ct}(F)))_e$ the full subcategory 
of $\on{Ind}(D^b_{\check G(\calO)}(\check X_{\ct}(F)))$
generated by $\omega_e$ under the Hecke action.
We denote by $D_{\check G(\calO)}(\check X_{\ct}(F)))_e\subset D_{\check G(\calO)}(\check X_{\ct}(F)))$ the full subcategory generated by 
$\tilde\omega_e=p_!\omega_{\on{pt}}$
where $p:\{\on{pt}\}\to \check G(\calO)^{\ct}\backslash\{\on{pt}\}\cong
\check G(\calO)\backslash \check X_{\ct}(\calO)_e$ 
under the Hecke action
(note that $\tilde\omega_e$ is a compact object in $D_{\check G(\calO)}(\check X_{\ct}(F)))$).
We denote by $D_{\check G(\calO)}(\check X_{\ct}(F)))^c_e\subset D_{\check G(\calO)}(\check X_{\ct}(F)))_e$ the full subcategory of compact objects.
We will call the above full subcategories the 
Satake categories for the loop symmetric space $\check X_{\ct}(F)$.

\begin{thm}\label{twisted DSat}
(1) There is a canonical  equivalence 
\[
 \on{Ind}\Upsilon^b_{}:\on{Ind}(D^b_{\check G(\calO)}(\check X_{\ct}(F)))_e\cong \on{QCoh}^G(T^*X[2])\cong D^{K}(\on{Sym}(\fp[-2]))\]
 compatible with the monoidal action of the derived Satake equivalence for $\check G$ in~\eqref{DSat} on both sides.
 Moreover, it 
restricts to an equivalence
\[
 \Upsilon^b_{}:D^b_{\check G(\calO)}(\check X_{\ct}(F))_{e}\cong 
 \on{Perf}^G(T^*X[2])\cong D^{K}_{\on{perf}}(\on{Sym}(\fp[-2]))
 \]
 on compact objects.

 (2) There is a canonical  equivalence 
 \[
\Upsilon_{}:D_{\check G(\calO)}(\check X_{\ct}(F))_e\cong 
\on{QCoh}^G(T^*X[2])_{\calN_{\fp^*}}\cong
D^{K}(\on{Sym}(\fp[-2]))_{\calN_{\fp^*}}
\]
 compatible with the monoidal action of the derived Satake equivalence for $\check G$ in~\eqref{DSat2} on both sides.
 Moreover, it
 restricts to an equivalence
 \[
\Upsilon^c_{}:D_{\check G(\calO)}(\check X_{\ct}(F))_e^c
\cong 
\on{Perf}^G(T^*X[2])_{\calN_{\fp^*}}\cong
D^{K}_{\on{perf}}(\on{Sym}(\fp[-2]))_{\calN_{\fp^*}}\]
on compact objects.
\quash{
(3) There is an isomorphism 
\[\on{H}^*_{\check G(\calO)^{\ct}}(\Gr^{\ct},\calF)\cong\on{Ind}(\Phi^b_{\check\theta})(\calF)\otimes_{\Sym(\fh[-2])}\bC[\fc_H]\] compatible 
with the action of 
$\on{H}^H_*(\Gr^{\check\theta})\cong J_H$.
The map $\Sym(\fh[-2])\cong\bC[\fh^*[-2]]\stackrel{\kappa_H^*}\to\bC[\fc_H]$
is indcued by the Kostant section 
$\kappa_H:\fc_H\to\fh^*$.}
\end{thm}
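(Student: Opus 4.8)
The plan is to run the de-equivariantization argument used to prove the twisted derived Satake equivalence for $\Gr^{\ct}$, now for the module category $\calC:=\on{Ind}(D^b_{\check G(\calO)}(\check X_{\ct}(F)))_e$. By geometric Satake for $\check G$ and the Hecke action, $\calC$ is a module category over $\on{Ind}(D^b_{\check G(\calO)}(\Gr))\cong D^G(\Sym(\fg[-2]))$, hence over $D(\on{QCoh}(BG))$, and we may form the de-equivariantized category $\calC_{\on{deeq}}:=\calC\times_{BG}\{\on{pt}\}$, whose morphism complexes are $\on{RHom}_\calC(\calM_1,\calM_2\star\bC[G])$. Every object of $\calC_{\on{deeq}}$ carries a $G$-action together with a central action of $\Sym(\fg[-2])$ coming from the residual de-equivariantized Hecke action, and $\calC$ is recovered as the category of $G$-equivariant objects in $\calC_{\on{deeq}}$.

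First I would check that $\omega_e$, regarded as an object of $\calC_{\on{deeq}}$, is a compact generator: it is compact because $\oblv(\omega_e)$ is supported on the finite-type substack $\check G(\calO)\cdot e$, and it generates because $\omega_e$ generates $\calC$ under the Hecke action by definition of the subscript $e$. Barr--Beck--Lurie then yields $\calC_{\on{deeq}}\cong D(A^{\on{op}})$ and $\calC\cong D^G(A^{\on{op}})$ with $A=\on{RHom}_{\calC_{\on{deeq}}}(\omega_e,\omega_e)=\on{RHom}_\calC(\omega_e,\omega_e\star\bC[G])=\Psi(\calA_{\check G,\ct})$, the last identification being part of Lemma \ref{ext algebra}. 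By Lemma \ref{ext algebra} and Theorem \ref{dual construction}(2), $A$ is formal and there is a $G$-equivariant isomorphism $A\cong\bC[T^*X[2]]$ of $\Sym(\fg[-2])$-algebras, the algebra structure being pullback along the moment map $\mu_X\colon T^*X\to\fg^*$. Substituting this gives $\calC_{\on{deeq}}\cong\on{QCoh}(T^*X[2])$ and, taking $G$-equivariant objects, $\calC\cong\on{QCoh}^G(T^*X[2])$; restriction to the zero section of $T^*X\cong G\times^K\fp^*$ identifies the latter with $D^K(\Sym(\fp[-2]))$. Passing to compact objects on both sides gives the stated equivalence on $D^b_{\check G(\calO)}(\check X_{\ct}(F))_e$, $\on{Perf}^G(T^*X[2])$ and $D^K_{\on{perf}}(\Sym(\fp[-2]))$.

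For the compatibility with the derived Satake equivalence for $\check G$, the point is that the de-equivariantization was carried out precisely along the $BG$-module structure given by the Hecke action, so the $\Sym(\fg[-2])$-algebra structure on $A$ --- which by Theorem \ref{dual construction}(2) is $\mu_X^*$ --- transports the Hecke action on $\calC$ to the action of $D^G(\Sym(\fg[-2]))$ on $\on{QCoh}^G(T^*X[2])$ by $\mu_X$-pullback and tensor product; the functorial-kernel computation of Theorem \ref{functoriality} upgrades this to a full compatibility of the module structures, exactly as in \cite[Section 5.4]{CMNO}. For part (2), the co-complete category $D_{\check G(\calO)}(\check X_{\ct}(F))_e$ is generated by the genuinely compact object $\tilde\omega_e$, and de-equivariantization replaces $A$ by the non-equivariant endomorphism algebra, which by Theorem \ref{dual construction}(3) is $\bC[G\times^K\calN_{\fp^*}]$; this forces set-theoretic support on the symmetric nilpotent cone, and the exact matching with $\on{QCoh}^G(T^*X[2])_{\calN_{\fp^*}}$ and $D^K(\Sym(\fp[-2]))_{\calN_{\fp^*}}$ follows from the general support/renormalization formalism of \cite[Section 12]{AG} together with the formality statements of Theorem \ref{formality}.

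The conceptual heart of the theorem --- the identification of $A$ with $\bC[T^*X[2]]$ as a $G$-equivariant $\Sym(\fg[-2])$-algebra --- is already supplied by Theorem \ref{dual construction} and Theorem \ref{functoriality} via the fully-faithfulness result Theorem \ref{full-faithfulness}, so the difficulty is not there. The main obstacles are: (i) showing that $\omega_e$ (resp.\ $\tilde\omega_e$) is a compact generator of the relevant de-equivariantized category, which requires control of the $\check G(\calO)$- and $\check I$-orbit stratifications of $\check X_{\ct}(F)$ and the purity of the corresponding standard and costandard objects --- in the generality of an arbitrary quasi-split $X$ this rests on the companion work \cite{CY2}, and the possible disconnectedness of $K$ must be tracked so that all formality and descent statements are genuinely $K$-equivariant rather than $K^\circ$-equivariant; and (ii) making the support conditions of part (2) precise, so that the (generally non-reduced, non-normal) scheme structure of $\calN_{\fp^*}$ is matched correctly on the two sides, which is why the machinery of \cite[Section 12]{AG} rather than a naive Barr--Beck argument is needed.
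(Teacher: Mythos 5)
Your proposal is correct and follows essentially the same route as the paper: de-equivariantize $\on{Ind}(D^b_{\check G(\calO)}(\check X_{\ct}(F)))_e$ along the $B G$-module structure coming from the Hecke action, invoke Barr--Beck--Lurie with $\omega_e$ as compact generator, and identify the de-equivariantized endomorphism algebra $\Psi(\calA_{\check G,\ct})$ with $\bC[T^*X[2]]$ via Lemma~\ref{ext algebra}, Theorem~\ref{formality} and Theorem~\ref{dual construction}, exactly as the paper does. For part (2) the paper argues slightly more directly---it shows the compact object $\tilde\omega_e$ maps to $\bC[G\times^K\calN_{\fp^*}]$ under the embedding into $\on{QCoh}^G(T^*X[2])$ and identifies the subcategory it generates with $\on{QCoh}^G(T^*X[2])_{\calN_{\fp^*}}$, without needing the renormalization machinery of \cite[Section 12]{AG} that you invoke (and your second ``obstacle'' is not actually an issue: the subcategory is defined by \emph{set-theoretic} support, so the scheme structure of $\calN_{\fp^*}$ plays no role here). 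Your first ``obstacle'' is also somewhat misdirected: compactness and generation of $\omega_e$ in the de-equivariantized category are automatic from it lying in $D^b$ and the definition of the subscript $e$; the purity and orbit-stratification input is used instead for the formality of the endomorphism algebra, which you have already (correctly) attributed to Theorem~\ref{formality}.
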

\begin{proof}
    (1) 
    Recall the 
    de-equivariantized dg algebra 
    $\Psi(\calA_{\ct})\cong\on{RHom}_{\on{Ind}(D^b_{\check G(\calO)}(\check X_{\ct}(F)))}(\omega_{\check X_{\ct}(\calO)},\omega_{\check X_{\ct}(\calO)}\star\bC[G])$ in Section \ref{TRS}.
    The same argument as in Theorem \ref{twisted DSat}
    implies that the functor 
    \[\on{Ind}(D^b_{\check G(\calO)}(\check X_{\ct}(F)))_e\cong D^G(\Psi(\calA_{\ct})),\ \ \ \calF\to \on{RHom}_{\on{Ind}(D^b_{\check G(\calO)}(\check X_{\ct}(F)))}(\omega_{e},\calF\star\bC[G])\]
    is an
   equivalent where
    $D^G(\Psi(\calA_{\ct}))$ 
    is the category 
    of $G$-equivariant dg modules over $\Psi(\calA_{\ct})$. 
    By Theorem \ref{dual construction}, we have 
    $\Sym(\fg[-2])$-algebras isomorphism $\Psi(\calA_{\ct})\cong\bC[T^*X[2]]$ and hence \[\on{Ind}(D^b_{\check G(\calO)}(\check X_{\ct}(F)))_e\cong D^G(\Psi(\calA_{\ct}))\cong \on{QCoh}^G(T^*X[2])\cong D^K(\Sym(\fp[-2]))\]
    Part (1) follows.

    (2) We claim that under the fully-faithful embedding
    \[D_{\check G(\calO)}(\check X_{\ct}(F))_e\cong\on{Ind}(D_{\check G(\calO)}(\check X_{\ct}(F))_e)^c\to\on{Ind}(D^b_{\check G(\calO)}(\check X_{\ct}(F))_e)\cong \on{QCoh}^G(T^*X[2])\]
    the object $\tilde\omega_e$
    goes to 
    $\bC[G\times^K\calN_{\fp^*}]$.
    We conclude that $D_{\check G(\calO)}(\check X_{\ct}(F))_e$
is equivalent to the full subcategory of $ \on{QCoh}^G(T^*X[2])$ generated by
objects of the form 
$\bC[G\times^K\calN_{\fp^*}]\otimes V$
for some $V\in\on{Rep}(G)$. It is clear that this subcategory is exactly $\on{QCoh}^G(T^*X[2])_{\calN_{\fp^*}}$.

To prove the claim, we note that 
    the image of $\tilde\omega_e$ is given by 
    $\on{RHom}_{\on{Ind}(D^b_{\check G(\calO)}(\check X_{\ct}(F)))}(\omega_{e},\tilde\omega_{e}\star\bC[G])$ and using the diagram in Lemma \ref{ext algebra} one can show that 
    \[\on{RHom}_{\on{Ind}(D^b_{\check G(\calO)}(\check X_{\ct}(F)))}(\omega_{e},\tilde\omega_{e}\star\bC[G])\cong\on{R\Gamma}(\Gr^{\ct},\check\rho^!\bC[G]).\]
  Now the formality property in Theorem \ref{formality} and the computation in Theorem \ref{ext algebra} (3) implies the desired claim
  \[\on{R\Gamma}(\Gr^{\ct},\check\rho^!\bC[G])\cong \on{H}^*(\Gr^{\ct},\check\rho^!\bC[G])\cong\bC[G\times^K\calN_{\fp^*}].\]
   \end{proof}

\subsection{Bezrukavnikov equivalence for quasi-split symmetric spaces}\label{Bez}
Recall the Grothendieck-Springer resolution $\widetilde\fg\to\fg^*$
 parametrizing pairs
$(v,B')$ where $B'$ is a Borel subgroup of $G$ and $v\in\fg^*$ is a
linear map which vanishes on the Lie algebra of unipotent radical of $B'$,
and the map to $\fg^*$ is the natural projection map. 
We have a natural $G$-action on $\widetilde\fg\to\fg^*$ given by $g(v,B')=(\on{Ad}_g(v),\on{Ad}_g(B'))$.
Consider the base change $\fp^*\times_{\fg^*}\widetilde\fg$
along the embedding $\fp^*\to\fg^*$.
Note that $\fp^*\times_{\fg^*}\widetilde\fg$
is stable under the action of the symmetric subgroup $K$. Moreover,
there is an isomorphism of stacks
\[(T^*X\times_{\fg^*}\widetilde\fg)/G\cong(\fp^*\times_{\fg^*}\widetilde\fg)/K.\]
Let $\check I\subset\check G(\calO)$ be the Iwahori subgroup associated to the Borel $\check B$.
In \cite{CY2}, we extend our 
 previous work 
 \cite[Theorem 36 and Proposition 4.1]{CY1} on placidness of $\check X(F)$ to the setting of loop symmetric space $\check X_{\ct}(F)$ and 
we show 
that 
$\check X_{\ct}(F)$ admits a dimension theory and the 
colimit of 
$\check G(\calO)$-orbit closures (resp. $\check I$-orbits closures)
gives rise to a placid presentation of $\check X_{\ct}(F)$.
Let $D^b_{\check I}(\check X_{\ct}(F))$
be the $\check I$-equivariant derived category of $\check X_{\ct}(F)$. We have a natural monoidal action of the affine Hecke category 
$D^b_{\check I}(\check G(F)/\check I)$
on $D^b_{\check I}(\check X_{\ct}(F))$ given 
by the convolution product 
and we denote by 
$D^b_{\check I}(\check X_{\ct}(F))_e\subset D^b_{\check I}(\check X_{\ct}(F))$ 
the full subcategory generated by $\omega_{e}$, viewed as an object in $D^b_{\check I}(\check X_{\ct}(F))$, under the
affine Hecke action of $D^b_{\check I}(\check G(F)/\check I)$.
Now the derived Satake equivalence 
for loop symmetric spaces 
in Theorem \ref{twisted DSat} together with \cite[Theorem 5.1]{LPY} imply the following 
constructible realization of 
equivariant perfect complexes on 
$T^*X\times_{\fg^*}\widetilde\fg$ and $\fp^*\times_{\fg^*}\widetilde\fg$:

\begin{thm}\label{Bez equivalence}
 There are natural equivalences
 \[D^b_{\check I}(\check X_{\ct}(F))_e\cong
 \on{Perf}^G((T^*X\times_{\fg^*}\widetilde\fg)[2])\cong 
 \on{Perf}^K((\fp^*\times_{\fg^*}\widetilde\fg)[2]) \]
\end{thm}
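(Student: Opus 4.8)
The plan is to upgrade the spherical equivalence of Theorem~\ref{twisted DSat} to the Iwahori level via the general mechanism of \cite[Theorem~5.1]{LPY}, and then to rewrite the spectral side in its relative form. I would start from the equivalences $\on{Ind}(D^b_{\check G(\calO)}(\check X_{\ct}(F)))_e\cong \on{QCoh}^G(T^*X[2])$ and $D^b_{\check G(\calO)}(\check X_{\ct}(F))_e\cong \on{Perf}^G(T^*X[2])$ of Theorem~\ref{twisted DSat}, which are module categories over $\on{Ind}(D^b_{\check G(\calO)}(\Gr))\cong D^G(\Sym(\fg[-2]))$, respectively over derived Satake for $\check G$ in~\eqref{DSat}, with the structure map $T^*X[2]\to\fg^*[2]$ on the spectral side being the moment map $\mu_X$.

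Next I would bring in the Iwahori structure. By construction $D^b_{\check I}(\check X_{\ct}(F))_e$ is generated by $\omega_e$ under the convolution action of the affine Hecke category $D^b_{\check I}(\check G(F)/\check I)$, and the forgetful functor to $D^b_{\check G(\calO)}(\check X_{\ct}(F))_e$ intertwines this with the derived Satake action through Gaitsgory's central (nearby-cycles) functor $D^b_{\check G(\calO)}(\Gr)\to D^b_{\check I}(\check G(F)/\check I)$. Bezrukavnikov's equivalence~\cite{B} (see also \cite{ABG}), in its form compatible with derived Satake~\cite{BF,AG}, identifies $D^b_{\check I}(\check G(F)/\check I)$ with a graded coherent Hecke category $\on{Coh}^G(\widetilde\fg\times_{\fg^*}\widetilde\fg)$, the central functor going to pullback--pushforward along the Grothendieck--Springer map $\widetilde\fg\to\fg^*$. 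Feeding this, together with the spherical equivalence above, into \cite[Theorem~5.1]{LPY} then produces
\[D^b_{\check I}(\check X_{\ct}(F))_e\;\cong\; \on{Perf}^G\big((T^*X\times_{\fg^*}\widetilde\fg)[2]\big),\]
where the spectral side is the base change of $\on{Perf}^G(T^*X[2])$ along $\widetilde\fg\to\fg^*$, i.e.\ the compact objects in the relative tensor product of $\on{QCoh}^G(T^*X[2])$ with $\on{QCoh}^G(\widetilde\fg[2])$ over $\on{QCoh}^G(\fg^*[2])$, taken along $\mu_X$ and the Grothendieck--Springer map. To run this step I would need to verify the hypotheses of \cite[Theorem~5.1]{LPY} for the loop symmetric space: that $\check X_{\ct}(F)$ is placid with a dimension theory and a placid presentation by the closures of $\check G(\calO)$- and $\check I$-orbits (the content of \cite{CY2}, extending \cite{CY1}), that $D^b_{\check I}(\check X_{\ct}(F))_e$ is compactly generated by $\omega_e$ under the affine Hecke action (immediate from the Iwahori stratification), and that the six-functor and central-sheaf formalism on $\check X_{\ct}(F)$ behaves as in the group case.

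For the remaining equivalence $\on{Perf}^G((T^*X\times_{\fg^*}\widetilde\fg)[2])\cong \on{Perf}^K((\fp^*\times_{\fg^*}\widetilde\fg)[2])$ I would simply use $T^*X\cong G\times^K\fp^*$, which yields the isomorphism of quotient stacks $(T^*X\times_{\fg^*}\widetilde\fg)/G\cong(\fp^*\times_{\fg^*}\widetilde\fg)/K$ recorded just before the statement, and then descend $G$-equivariant perfect complexes along the $G$-torsor $G\times\fp^*\to G\times^K\fp^*$.

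The main obstacle is the verification in the second paragraph: confirming that the hypotheses of \cite[Theorem~5.1]{LPY} genuinely apply to the loop symmetric space --- especially the placidness and dimension-theory input from \cite{CY2}, the comparison between the $\check I$- and $\check G(\calO)$-equivariant categories and their Hecke actions, and the compatibility of the constructible central functor with $\fg^*$-linearity on the spectral side. A secondary subtlety worth flagging is that the Iwahori acting here is $\check I\subset\check G(\calO)$ of the dual group, not the twisted Iwahori $\check I^{\ct}$, since $\check G(F)$ itself --- rather than the twisted loop group --- acts on $\check X_{\ct}(F)=\check G(F)/\check G(F)^{\ct}$.
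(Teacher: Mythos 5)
Your proposal follows essentially the same route the paper takes: the paper's proof consists of citing placidness/dimension-theory input from \cite{CY2}, invoking Theorem~\ref{twisted DSat} for the spherical case, and then applying \cite[Theorem~5.1]{LPY} to upgrade to the Iwahori level, with the second equivalence coming from the stack isomorphism $(T^*X\times_{\fg^*}\widetilde\fg)/G\cong(\fp^*\times_{\fg^*}\widetilde\fg)/K$ recorded just before the statement. Your extra discussion of the central functor and Bezrukavnikov's coherent realization of the affine Hecke category is an unpacking of what \cite[Theorem~5.1]{LPY} does internally, not a deviation; and your closing observation that the Iwahori is $\check I\subset\check G(\calO)$ (not $\check I^{\ct}$) is correct and worth noting.
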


The relationship between the equivalences above with \cite{B} is as follows.
Consider the Springer resolution 
$\widetilde\calN\to\fg$ 
 parametrizing pairs
$(v,B')$ where $B'$ is a Borel subgroup of $G$ and $v\in\fg$ is a vector lies in the Lie algebra of unipotent radical of $B'$,
and the map to $\fg$ is the natural projection map. Then there is a  Koszul duality equivalence
\[
\on{Perf}^K((\fp^*\times_{\fg^*}\widetilde\fg)[2])\cong
\on{Coh}^K((\fk\times_{\fg}\widetilde\calN)[-1])\]
and the composed equivalence
\[D^b_{\check I}(\check X_{\ct}(F))_e\cong \on{Perf}^K((\fp^*\times_{\fg^*}\widetilde\fg)[2])\cong\on{Coh}^K((\fk\times_{\fg}\widetilde\calN)[-1])\]
provides a generalization of 
\cite[Theorem 1(4)]{B} to quasi-split symmetric spaces.\footnote{We thank S. Devalapurkar
for explaining this fact to the author.} 
 
\begin{remark}
It would be very interesting to study the properties of the equivalences in Theorem \ref{Bez equivalence}, such as compatibility with $t$-structures, like the case in \cite{B}.
\end{remark}

\subsection{Variants: triality, special parahoric, and subfield $F_n=\bC((t^n))$ }\label{variant}
We discuss generalizations 
of the our main results in the previous sections 
to some non-symmetric spaces settings. 
Many results in this section were 
also discovered independently by  S. Devalapurkar \cite[Section 4]{De1} and \cite[Section 5.5.6]{De2}.

\subsubsection{Triality of $Spin_{8}$}
Consider the case $G=PSO_8$
and $\check G=Spin_{8}$. Let 
$\theta_0\in\on{Aut}(PSO_8)$ and  $\check\theta_0\in\on{Aut}(Spin_8)$ be the unique   order $3$ pinned automorphism (i.e. the 
triality).
Let $G^{\theta_0}\cong\check G^{\check\theta_0}\cong G_2$
be the fixed points subgroups.
Fix a primitive $3$-th root $\mu_3$ and let $\ct:Spin_{8}(F)\to Spin_{8}(F)$ is be order three automorphism $\ct(\gamma)(t)=\check\theta_0(\gamma(\mu_3t))$.
Let $Spin_{8}(F)^{\ct}$ and $Spin_{8}(\calO)^{\ct}$
be the corresponding twisted loop and arc group.

Since $\theta_0$ is a pinned automorphism, 
the fixed point $\fg_2^*=(\frak{so}_8^*)^{\theta_0}$
contains a principal $\frak{sl}_2$ of $\frak{so}_8$,
and it follows that $\fg_2^{*,r}=\fg_2^*\cap\frak{so}_8^{*,r}$ and hence the same discussion in Section \ref{involutions}
shows that
the natural automorphism on
$I_{\frak{so}_8}|_{\fg_2^{*,r}}$ sending $(g,h)\to (\theta_0(g),h)$ descends to an automorphism 
$\theta_{0,J}$ on $J_{PSO_8}|_{\fc_{G_2}}$ with fixed points $(J_{PSO_8}|_{\fc_{G_2}})^{\theta_{0,J}}\cong\ J_{G_2}$.
We define $\on{Nm}:J_{PSO_8}|_{\fc_{G_2}}\to J_{G_2}$
the norm map sending $g$ to $\on{Nm}(g)=g\theta_{0,J}(g)\theta^2_{0,J}(g)$.
Note that $Spin_8(F)^{\ct}/Spin_8(\calO)^{\ct}$
is an example of twisted affine Grassmannian in considered in \cite{Z}. In particular,
we have the geometric Satake equivalence
\[\on{Perv}_{Spin_8(\calO)^{\ct}}(Spin_8(F)^{\ct}/Spin_8(\calO)^{\ct})\cong\on{Rep}(G_2)\]
and the $\IC$-compelxes in $\on{Perv}_{Spin_8(\calO)^{\ct}}(Spin_8(F)^{\ct}/Spin_8(\calO)^{\ct})$
 are very pure and satisfy the parity vanishing condition.

Consider the affine scheme
\[\overline{PSO_8\times J_{G_2}/J_{PSO_8}|_{\fc_{G_2}}}=\on{Spec}(\bC[PSO_8\times J_{G_2}]^{J_{PSO_8}|_{\fc_{G_2}}})\]
where $J_{PSO_8}|_{\fc_{G_2}}$ acts on 
$PSO_8$ via the embedding 
$J_{PSO_8}|_{\fc_{G_2}}\to\fc_{G_2}\times PSO_8$ induced by a Kostant section 
and on $J_{G_2}$ via the norm map.

\begin{thm}\label{Triality}
    (1) There is an isomorphism
\[\on{Spec}(\on{H}_*^{G_2
}(Spin_8(F)^{\ct}/Spin_8(\calO)^{\ct}))\cong J_{G_2}\]
of group schemes  fitting into the following diagram commutes
    \[\xymatrix{\on{Spec}(\on{H}_*^{G_2}(Spin_8(F)/Spin_8(\calO)))\ar[r]^{ \rho^{G_2}}\ar[d]^\simeq&\on{Spec}(\on{H}_*^{G_2}(Spin_8(F)^{\ct}/Spin_8(\calO)^{\ct})))\ar[d]^\simeq\\
J_{PSO_8}|_{\fc_{G_2}}\ar[r]^{\on{Nm}}&J_{G_2}}\]
    where $\on{Nm}$ is the norm map
    and the map $\rho^{G_2}_*$
    is induced by the embedding 
    $\check\rho:Spin_{8}(F)^{\ct}/Spin_8(\calO)^{\ct}\to Spin_{8}(F)/Spin_8(\calO)$.

(2) There is a monoidal equivalence 
\[D^b_{Spin_8(\calO)^{\ct}}(Spin_8(F)^{\ct}/Spin_8(\calO)^{\ct})\cong\on{Perf}^{G_2\times G_2}(T^*G_2[2])\]

(3) There is an equivalence 
\[D^b_{Spin_8(\calO)}(Spin_8(F)/Spin_8(F)^{\ct})_e\cong \on{Perf}^{PSO_8}_{}(\overline{PSO_8\times J_{G_2}/J_{PSO_8}|_{\fc_{G_2}}}).\]
Here $D^b_{Spin_8(\calO)}(Spin_8(F)/Spin_8(F)^{\ct})_e\subset D^b_{Spin_8(\calO)}(Spin_8(F)/Spin_8(F)^{\ct})$ is full subcategory 
generate by the dualizing sheaf $\omega_e$
of the $Spin_8(\calO)$-orbit through the base point $e\in Spin_8(F)/Spin_8(F)^{\ct}$
under the Hecke action.

    \end{thm}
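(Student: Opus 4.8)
The plan is to run the triality case through exactly the same machinery developed for the involution case, replacing the symmetric pair $(G,H)=(G,G^{\theta_0})$ and the order-$2$ automorphism $\check\theta_0$ by the order-$3$ pair $(PSO_8,G_2)$ and the triality $\check\theta_0$. The excerpt already records the two key inputs that make this work: first, since $\theta_0$ is a pinned automorphism, $\fg_2^{*,r}=\fg_2^*\cap\frak{so}_8^{*,r}$, so the analogue of Lemma \ref{J} and the norm-map formalism of Section \ref{involutions} goes through verbatim (with $\on{Nm}(g)=g\,\theta_{0,J}(g)\,\theta_{0,J}^2(g)$); second, $Spin_8(F)^{\ct}/Spin_8(\calO)^{\ct}$ is one of Zhu's twisted affine Grassmannians, so the geometric Satake equivalence, pointwise purity, and parity vanishing of $\IC$-complexes are all available.

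For part (1), I would first establish the Hopf-algebra structure on $\on{H}_*^{\check T_{G_2}}(Spin_8(F)^{\ct}/Spin_8(\calO)^{\ct})$ and the injectivity of $\rho_*$ exactly as in Lemma \ref{commutativity}, using that $\check T_{G_2}$ contains $\check T$-regular elements (true in the triality case) together with the localization theorem; the fixed points are $X_\bullet(\check T)_{\check\theta}$ and $X_\bullet(\check T)$ respectively. Then I would construct the surjection $\psi_*^{G_2}\colon\on{H}_*^{G_2}(\Gr)\to\on{H}_*^{G_2}(\Gr^{\ct})$ via the nearby-cycle functor along Zhu's Beilinson--Drinfeld degeneration of $\Gr_{Spin_8}$ to $\Gr^{\ct}$, exactly as in the proof of Proposition \ref{cohomology}; the identification $\rho_*\circ\psi_*=\on{Nm}^*$ is checked after inverting $\bC[\fc_{G_2}]$ by reduction to the torus, where it becomes the combinatorial statement that $X_\bullet(\check T)\to X_\bullet(\check T)_{\check\theta}$, $\check\mu\mapsto\check\mu+\check\theta_0(\check\mu)+\check\theta_0^2(\check\mu)$. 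Then surjectivity of $\psi_*$ together with $\on{Im}(\on{Nm})=J_{G_2}$ (the triality analogue of Lemma \ref{exact}, which holds because $G_2$ is adjoint so $J_{G_2}$ has connected fibers, and one invokes \cite[Proposition 9.1]{R}) yields $\on{Spec}(\on{H}_*^{G_2}(\Gr^{\ct}))\cong J_{G_2}$ and the commuting square.

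For part (2), I would repeat the de-equivariantization/Barr--Beck argument of Theorem \ref{twisted DSat}: the relevant $\on{Ext}$-algebra $\on{Ext}^*(\IC_0,\IC_0\star\bC[G_2])$ is formal by pointwise purity (using \cite[Section 6.5]{BF}), and it computes to $\on{Hom}^\bullet_{\on{H}^*_{G_2}(\Gr^{\ct})}(\bC[\fc_{G_2}],\bC[\fc_{G_2}\times G_2])\cong(\bC[\fc_{G_2}\times G_2]^{J_{G_2}})\cong\Sym(\fg_2[-2])$ using part (1) and the codimension-$\geq 2$ property of the irregular locus; this gives $D^b_{Spin_8(\calO)^{\ct}}(\Gr^{\ct})\cong D^{G_2}_{\on{perf}}(\Sym(\fg_2[-2]))\cong\on{Perf}^{G_2\times G_2}(T^*G_2[2])$. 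For part (3), the ring-object construction of Section \ref{ring objects} applies to $\check X_{\ct}(F)=Spin_8(F)/Spin_8(F)^{\ct}$ verbatim: $\calA_{\check G,\ct}$ is the internal Hom of $\omega_e$, Lemma \ref{ext algebra} identifies $\Psi(\calA_{\check G,\ct})$ with $\on{R\Gamma}_{Spin_8(\calO)^{\ct}}(\Gr^{\ct},\check\rho^!\bC[PSO_8])$, this is $!$-pure hence formal by Theorem \ref{formality}, and Theorem \ref{full-faithfulness} (whose hypotheses (S1)--(S3) are verified for $\check\rho$ exactly as in Lemma \ref{S1-S3}, noting $\Gr_{Spin_8}^{\check T_{G_2}}$ is still isolated fixed points indexed by $X_\bullet(\check T)$) computes it as $(\on{H}^*_{Spin_8(\calO)^{\ct}}(\Gr^{\ct},\omega)\otimes_{\bC[\fc_{G_2}]}\bC[PSO_8\times\fc_{G_2}])^{J_{PSO_8}|_{\fc_{G_2}}}\cong\bC[\overline{PSO_8\times J_{G_2}/J_{PSO_8}|_{\fc_{G_2}}}]$; the equivalence then follows from the de-equivariantization argument of Theorem \ref{twisted DSat}(1) applied to $\check X_{\ct}(F)$.

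The main obstacle I anticipate is not any single deep step but the accumulation of ``the same proof works'' claims: one must carefully confirm that each structural input used for the $\mu_2$-case genuinely survives for $\mu_3$. The most delicate points are (i) that $\check T_{G_2}$ really contains $\check G$-regular semisimple elements (needed for the localization argument in the analogue of Lemma \ref{commutativity} and for isolated torus-fixed points in the analogue of Lemma \ref{S1-S3}) --- this should be checked directly on the $G_2\subset Spin_8$ root data --- and (ii) the torus-level computation of $\rho_*\circ\psi_*$, where the relevant map on cocharacter lattices is now $\check\mu\mapsto\sum_{i=0}^{2}\check\theta_0^i(\check\mu)$ and one needs the precise description of $\check G(\calO)^{\ct}$- and $\check I^{\ct}$-orbit representatives $s_{[\mu]}$ from \cite{Z} for the order-$3$ Kottwitz homomorphism. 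Once these are in hand, everything else is a transcription of Sections \ref{twisted Satake} and \ref{TRS}.
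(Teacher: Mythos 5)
Your proposal matches the paper's proof exactly: the paper's argument for part (1) is precisely the localization argument of Proposition \ref{cohomology} run with the order-$3$ Kottwitz embedding $[\mu]\mapsto t^{\mu+\check\theta_0(\mu)+\check\theta_0^2(\mu)}\cdot\check T(\calO)$, and parts (2)--(3) follow by re-running the de-equivariantization and ring-object machinery once the hypotheses of Lemma \ref{S1-S3} are verified using that $\check T_{G_2}$ contains $\check G$-regular semisimple elements. Your more detailed account of the delicate points (the regularity check on $\check T_{G_2}$ and the torus-level computation of $\rho_*\circ\psi_*=\on{Nm}^*$) is a faithful expansion of what the paper compresses into a single paragraph.
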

\begin{proof}
Since 
the $\check T^{\ct_0}=\check T_{G_2}$-fixed points on  
$Spin_8(F)^{\ct}/Spin_8(\calO)^{\ct}$ are 
parametrized by 
$ X_\bullet(\check T_{})_{\check\theta}\cong \check T_{}(F)^{\check\theta}/\check T(\calO)^{\check\theta}$ with embedding 
$ X_\bullet(\check T)_{\check\theta}\cong\check T(F)^{\check\theta}/T(\calO)^{\check\theta}\to \check T(F)^{}/T(\calO)^{}$
given by
$[\mu]\to t^{\mu+\check\theta_0(\mu)+\check\theta_0^2(\mu)}\cdot\check T(\calO)$. Now we can apply the same localization argument in Proposition \ref{cohomology} to prove (1).

Since $G_2$ contains regular semisimple elements of $Spin_8$,  Lemma \ref{S1-S3}
holds true for the embedding $\check\rho$,
the same argument as in the symmetric spaces setting implies (2) and (3).

\end{proof}

\subsubsection{Special parahoric of $SU_{2n+1}(F')$}\label{Special}
Let $G=PGL_{2n+1}$
and $\check G=SL_{2n+1}$, $n\geq 1$.
Let $\ct_0\in\on{Aut}(\check G,\check B,\check T,\check x)$  be the non-trivial pinned automorphism of $SL_{2n+1}$  with $(SL_{2n+1})^{\ct_0}\cong SO_{2n+1}$.
Let $\theta_0$ be corresponding pinned automorphism of $PGL_{2n+1}$ with 
fixed points $(PGL_{2n+1})^{\theta_0}\cong SO_{2n+1}$
Fix a primitive $4$-th root 
$\mu_4$ of unity.
Consider the field extension $E=\bC((t^{1/2}))\supset F$
and its ring of integers $\calO_E=\bC[[t^{1/2}]]$.
According to \cite[Section 2.1]{BH}
and \cite[Theorem 4.1.2]{DH}
, which goes back to the work of Kac \cite[Section 8]{Kac}, there is 
an
order $4$ automorphism $\sigma_0\in\on{Aut}(SL_{2n+1})$ 
satisfying the following properties:

(1) We have $\sigma_0=\on{Ad}_{s}\circ\ct_0$
for some $s\in\check T^{\check\theta_0}$ with $s^4=1$

(2) We have $(SL_{2n+1})^{\sigma_0}\cong Sp_{2n}$, 

(3) Let $\sigma:SL_{2n+1}(E)\to SL_{2n+1}(E)$ be the order $4$
automorphism of 
$SL_{2n+1}(E)$ given by 
$\sigma(\gamma)(t^{1/2})=\sigma_0(\gamma)(\mu_4^{-1}t^{1/2})$.
Then there is an isomorphism 
\[SL_{2n+1}(E)^{\sigma}\cong SL_{2n+1}(F)^{\ct}\cong SU_{2n+1}(F')\] such such the subgroup $SL_{2n+1}(\calO_E)^{\sigma}\subset SL_{2n+1}(E)^{\sigma}$ goes to the 
special parahoric 
of $P\subset SU_{2n+1}(F')$ with reductive quotient 
isomorphic to $Sp_{2n}$ (see \cite[Section 8]{Z}). 
\\

It follows that 
the quotient 
$SL_{2n+1}(E)^{\sigma}/SL_{2n+1}(E)^{\sigma}\cong SL_2(F)^{\ct}/P$
is an example of the twisted affine Grassmannians considered in \cite{Z}. In particular, we have the geometric Satake quivalence 
\[\on{Perv}_{SL_{2n+1}(E)^{\sigma}}(SL_{2n+1}(E)^{\sigma}/SL_{2n+1}(E)^{\sigma})\cong 
\on{Perv}_{P}(SU_{2n+1}(F')/P)\cong\on{Rep}(SO_{2n+1})\]
and the $\IC$-compelxes in $\on{Perv}_{SL_{2n+1}(E)^{\sigma}}(SL_{2n+1}(E)^{\sigma}/SL_{2n+1}(E)^{\sigma})$
 are very pure and satisfy the parity vanishing condition.
Consider the affine scheme
\[\overline{(PGL_{2n+1}\times J_{SO_{2n+1}}/J_{PGL_{2n+1}}|_{\fc_{SO_{2n+1}}}})=\on{Spec}(\bC[PGL_{2n+1}\times J_{SO_{2n+1}}]^{J_{PGL_{2n+1}}|_{\fc_{SO_{2n+1}}}})\]
where $J_{PGL_{2n+1}}|_{\fc_{SO_{2n+1}}}$ acts on  $J_{SO_{2n+1}}$ via the norm map
$\on{Nm}:J_{PGL_{2n+1}}|_{\fc_{SO_{2n+1}}}\to J_{SO_{2n+1}}$
in~\eqref{norm}.

\begin{thm}
    (1) There is an isomorphism
\[\on{Spec}(\on{H}_*^{Sp_{2n}
}(SL_{2n+1}(E)^{\sigma}/SL_{2n+1}(\calO_E)^{\sigma}))\cong J_{SO_{2n+1}}\]
of group schemes over $\check\fc_{Sp_{2n} }\cong\fc_{SO_{2n+1}}$ fitting into the following diagram commutes
    \[\xymatrix{\on{Spec}(\on{H}_*^{Sp_{2n}}(SL_{2n+1}(E)^{}/SL_{2n+1}(\calO_E)))\ar[r]^{ \rho^{Sp_{2n}}}\ar[d]^\simeq&\on{Spec}(\on{H}_*^{Sp_{2n}}(SL_{2n+1}(E)^{\sigma}/SL_{2n+1}(\calO_E)^{\sigma})))\ar[d]^\simeq\\
J_{PGL_{2n+1}}|_{\fc_{SO_{2n+1}}}\ar[r]^{[2]\circ\on{Nm}}&J_{SO_{2n+1}}}\]
    where 
     the map $\rho^{Sp_{2n}}$
    is induced by the embedding 
    \[\check\rho:SL_{2n+1}(E)^{\sigma}/SL_{2n+1}(\calO_E)^{\sigma}\to SL_{2n+1}(E)^{}/SL_{2n+1}(\calO_E)^{}\]

(2) There is a monoidal equivalence 
\[D^b_{SL_{2n+1}(\calO_E)^{\sigma}}(SL_{2n+1}(E)^{\sigma}/SL_{2n+1}(\calO_E)^{\sigma})\cong \on{Perf}^{SO_{2n+1}\times SO_{2n+1}}(T^*SO_{2n+1}[2])\]

(3) There is an equivalence 
\[D^b_{SL_{2n+1}(\calO_E)^{}}(SL_{2n+1}(E)^{}/SL_{2n+1}(E)^{\sigma})_e\cong \on{Perf}^{PGL_{2n+1}}_{}(\overline{PGL_{2n+1}\times J_{SO_{2n+1}}/J_{PGL_{2n+1}}|_{\fc_{SO_{2n+1}}}}).\]
Here $D^b_{SL_{2n+1}(\calO_E)^{}}(SL_{2n+1}(E)^{}/SL_{2n+1}(E)^{\sigma})_e\subset D^b_{SL_{2n+1}(\calO_E)^{}}(SL_{2n+1}(E)^{}/SL_{2n+1}(E)^{\sigma})$ is full subcategory 
generate by the dualizing sheaf $\omega_e$
of the $SL_{2n+1}(\calO_E)^{}$-orbit through the base point $e\in SL_{2n+1}(E)^{}/SL_{2n+1}(E)^{\sigma}$
under the Hecke action.
    
    \end{thm}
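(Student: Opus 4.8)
The plan is to run, essentially line by line, the same chain of arguments used for loop symmetric spaces and for the triality case: Proposition~\ref{cohomology} for the homology computation, the de-equivariantization/Barr--Beck--Lurie/formality machine of Sections~\ref{twisted Satake} and~\ref{coherent sheaves}, Theorems~\ref{formality} and~\ref{full-faithfulness} for the ring-object computation, with Theorem~\ref{Triality} as the structural template. The crucial point is that $SU_{2n+1}(F')/P$ is one of Zhu's twisted affine Grassmannians, so the following are already available: the geometric Satake equivalence $\on{Perv}_P(SU_{2n+1}(F')/P)\cong\on{Rep}(SO_{2n+1})$; pointwise purity and parity vanishing of the $\IC$-complexes; Zhu's degeneration $\widetilde\Gr\to\bbA^1$ of $\Gr_{SL_{2n+1}}$ over $E$ to $SU_{2n+1}(F')/P$, with its nearby-cycle functor; and the surjectivity/injectivity statements for equivariant (co)stalks. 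Two preliminary checks precede the machine: that a maximal torus $\check T_{Sp_{2n}}$ of $Sp_{2n}\subset SL_{2n+1}$ (generic element with eigenvalues $z_1^{\pm1},\dots,z_n^{\pm1},1$) contains regular semisimple elements of $SL_{2n+1}$ — so that all $\check T_{Sp_{2n}}$-localization arguments apply and the hypotheses (S1)--(S3) of \cite[Section~1.1]{G2}, i.e. Lemma~\ref{S1-S3}, hold for the embedding $\check\rho$ — and that $SO_{2n+1}$ is adjoint, so that Lemma~\ref{exact} applies with ``$H$'' $=SO_{2n+1}$.

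For part (1) I would reproduce the proof of Proposition~\ref{cohomology}. The nearby-cycle functor for $\widetilde\Gr\to\bbA^1$ produces a surjective map of graded Hopf algebras $\psi^{Sp_{2n}}_*\colon\bC[J_{PGL_{2n+1}}|_{\fc_{SO_{2n+1}}}]\cong\on{H}_*^{Sp_{2n}}(SL_{2n+1}(E)/SL_{2n+1}(\calO_E))\to\on{H}_*^{Sp_{2n}}(SL_{2n+1}(E)^\sigma/SL_{2n+1}(\calO_E)^\sigma)$; surjectivity is checked by descending to non-equivariant homology and using parity vanishing of $\IC$-complexes together with the intersection patterns of orbits of the twisted polynomial Iwahori subgroup $\check I_\infty^\sigma$, as in \cite{G1,YZ}. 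One then computes $\rho^{Sp_{2n}}_*\circ\psi^{Sp_{2n}}_*$ after tensoring with $\bC(\check\ft_{Sp_{2n}})$ by localizing to $\check T_{Sp_{2n}}$-fixed points; the only genuinely new input is that the Kottwitz homomorphism identifies the fixed-point embedding $(SL_{2n+1}(E)^\sigma/SL_{2n+1}(\calO_E)^\sigma)^{\check T_{Sp_{2n}}}\hookrightarrow\Gr_{SL_{2n+1}}^{\check T_{Sp_{2n}}}$ — accounting for the ramified extension $E=\bC((t^{1/2}))$ of $F$ and the inner $\mu_4$-twist, so that $\sigma_0^2=\on{Ad}_{s^2}$ acts trivially on $X_\bullet(\check T)$ and $\sigma_0$ acts there through the order-two involution $\ct_0$ — with $[\mu]\mapsto\mu+\sigma_0(\mu)+\sigma_0^2(\mu)+\sigma_0^3(\mu)=2(\mu+\ct_0(\mu))$, so the composite is $[2]\circ\on{Nm}$. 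Since $SO_{2n+1}$ is adjoint, Lemma~\ref{exact} gives $\on{Im}(\on{Nm})=J_{SO_{2n+1}}$, and $[2]$ is fppf-surjective on the smooth connected-fibre group scheme $J_{SO_{2n+1}}$, whence $\on{Spec}(\on{H}_*^{Sp_{2n}}(SL_{2n+1}(E)^\sigma/SL_{2n+1}(\calO_E)^\sigma))\cong\on{Im}([2]\circ\on{Nm})\cong J_{SO_{2n+1}}$ and $\rho^{Sp_{2n}}\cong[2]\circ\on{Nm}$.

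Part (2) then follows the proof of the twisted derived Satake equivalence of Section~\ref{twisted Satake}: de-equivariantize along the Satake action of $D(\on{QCoh}(B(SO_{2n+1})))$, observe $\IC_0$ is a compact generator of the de-equivariantized category and apply Barr--Beck--Lurie, invoke formality of $\on{RHom}(\IC_0,\IC_0\star\bC[SO_{2n+1}])$ from pointwise purity as in \cite[Section~6.5]{BF}, and compute its cohomology by fully-faithfulness of equivariant cohomology together with part (1): by Kostant descent it is $\bC[\fc_{SO_{2n+1}}\times SO_{2n+1}]^{J_{SO_{2n+1}}}\cong\bC[\frak{so}_{2n+1}^{*}[2]]\cong\Sym(\frak{so}_{2n+1}[-2])$ (no factor $[2]$ intervenes here, since $\on{H}_*^{Sp_{2n}}$ is $J_{SO_{2n+1}}$ on the nose; the cohomological grading matches the principal $\frak{sl}_2$ of $SO_{2n+1}$ by the \cite[Theorem~5.1]{Z}-type statement). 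This yields $D^b_{SL_{2n+1}(\calO_E)^\sigma}(SL_{2n+1}(E)^\sigma/SL_{2n+1}(\calO_E)^\sigma)\cong D^{SO_{2n+1}}_{\on{perf}}(\Sym(\frak{so}_{2n+1}[-2]))$, which one rewrites as $\on{Perf}^{SO_{2n+1}\times SO_{2n+1}}(T^*SO_{2n+1}[2])$ via the group-case derived Satake equivalence~\eqref{DS} for $SO_{2n+1}$; the monoidal structure comes from compatibility with nearby cycles as in Proposition~\ref{Comp}.

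For part (3) I would form the commutative ring object $\calA=\underline{\on{Hom}}(\omega_e)$ attached to $\omega_e$ on $SL_{2n+1}(E)/SL_{2n+1}(E)^\sigma$ as in Section~\ref{ring objects}, identify $\Psi(\calA)$ with $\on{R\Gamma}_{SL_{2n+1}(\calO_E)^\sigma}(\Gr^\sigma_{SL_{2n+1}},\check\rho^!\bC[PGL_{2n+1}])$ (the analogue of Lemma~\ref{ext algebra}, using the lax monoidal structure on $\check\rho^!$ from Lemma~\ref{lax monoidal}), deduce its formality from the $!$-purity of $\check\rho^!\bC[PGL_{2n+1}]$ via Theorem~\ref{formality}, and compute its coordinate ring by Theorem~\ref{full-faithfulness} as $(\on{H}_*^{Sp_{2n}}(\Gr^\sigma_{SL_{2n+1}})\otimes_{\bC[\fc_{SO_{2n+1}}]}\on{H}^*_{SL_{2n+1}(\calO_E)^\sigma}(\Gr_{SL_{2n+1}},\bC[PGL_{2n+1}]))^{J_{PGL_{2n+1}}|_{\fc_{SO_{2n+1}}}}\cong\bC[PGL_{2n+1}\times J_{SO_{2n+1}}]^{J_{PGL_{2n+1}}|_{\fc_{SO_{2n+1}}}}$, the $J_{PGL_{2n+1}}|_{\fc_{SO_{2n+1}}}$-action on $J_{SO_{2n+1}}$ being induced by $\rho^{Sp_{2n}}=[2]\circ\on{Nm}$ of part (1); this is the coordinate ring of the affine closure $\overline{PGL_{2n+1}\times J_{SO_{2n+1}}/J_{PGL_{2n+1}}|_{\fc_{SO_{2n+1}}}}$ in the statement. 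Applying Theorem~\ref{dual construction}(1) (equivariant formality of $\check\rho^!\bC[PGL_{2n+1}]$, via \cite[Proposition~2.1.1]{M}) and the de-equivariantized module-category argument of Section~\ref{coherent sheaves} then gives $D^b_{SL_{2n+1}(\calO_E)}(SL_{2n+1}(E)/SL_{2n+1}(E)^\sigma)_e\cong D^{PGL_{2n+1}}(\Psi(\calA))\cong\on{Perf}^{PGL_{2n+1}}(\overline{PGL_{2n+1}\times J_{SO_{2n+1}}/J_{PGL_{2n+1}}|_{\fc_{SO_{2n+1}}}})$ on compact objects. I expect the main obstacle to be the bookkeeping in part (1): tracking the Kottwitz homomorphism and the loop-rotation grading through the ramified, inner-twisted datum $(E/F',\mu_4,\sigma_0)$ carefully enough to secure the factor $[2]$ in $\rho^{Sp_{2n}}=[2]\circ\on{Nm}$, its compatibility with the affine-closure description in part (3), and the principal-$\frak{sl}_2$ grading on $\bC[SO_{2n+1}]$; once part (1) is in hand, parts (2) and (3) are formal consequences of the apparatus built in Sections~\ref{twisted Satake}--\ref{TRS}.
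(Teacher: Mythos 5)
Your proposal is correct and takes essentially the same approach as the paper: identify the fixed-point embedding $[\mu]\mapsto (t^{1/2})^{\mu+\sigma_0(\mu)+\sigma_0^2(\mu)+\sigma_0^3(\mu)}=(t^{1/2})^{2\mu+2\ct_0(\mu)}$ (using $\sigma_0=\on{Ad}_s\circ\ct_0$ with $s\in\check T^{\ct_0}$, so $\sigma_0$ acts through $\ct_0$ on $X_\bullet(\check T)$), then run the localization argument of Proposition~\ref{cohomology} to obtain $\rho^{Sp_{2n}}=[2]\circ\on{Nm}$, and feed this into the machinery of Sections~\ref{twisted Satake} and~\ref{TRS} for parts (2) and (3) via Lemma~\ref{S1-S3}. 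The paper's proof is much terser — it simply cites these steps — but the underlying argument is the one you spell out.
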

\begin{proof}
 The $\check T^{\sigma_0}=T_{Sp_{2n}}$-fixed points on  
$SL_{2n+1}(E)^{\sigma}/SL_{2n+1}(\calO_E)^{\sigma}$ are 
parametrized by 
$ X_\bullet(\check T_{})_{\sigma}\cong \check T_{}(E)^{\sigma}/\check T(\calO_E)^{\sigma}$ with embedding 
$ X_\bullet(\check T)_{\sigma}\cong\check T(E)^{\sigma}/\check T(\calO_E)^{\sigma}\to \check T(E)^{}/T(\calO_E)^{}$
given by
\[[\mu]\to {(t^{1/2})}^{\mu+\sigma_0(\mu)+\sigma_0^2(\mu)+\sigma_0^3(\mu)}\cdot\check T(\calO_E)=
{(t^{1/2})}^{\mu+\sigma_0(\mu)+\sigma_0^2(\mu)+\sigma_0^3(\mu)}\cdot\check T(\calO_E)=\]
\[={(t^{1/2})}^{2\mu+2\ct_0(\mu)}\cdot\check T(\calO_E).\]
The last equality follows from $\sigma_0=\on{Ad}_s\circ\ct_0$ with $s\in\check T^{\ct_0}$.
Now we can apply the same localization argument in Proposition \ref{cohomology} to prove (1).
Since $Sp_{2n}$ contains regular semisimple elements in $SL_{2n+1}$,
the Lemma \ref{S1-S3}
holds true for the embedding $\check\rho$, and 
Part (2) and (3) follow by the same argument 
as in the same argument as in the symmetric spaces setting. 
\end{proof}

\begin{remark}\label{complete}
(1) According to \cite[Remark 4.2.2]{DH},
the list in Example \ref{list 2} together 
with the cases $Spin_8(F)^{\ct}/Spin_8(\calO)^{\ct}$ and $SL_{2n+1}(E)^{\sigma}/SL_{2n+1}(\calO_E)^{\sigma}\cong SU_{2n+1}(F')/P$ cover all the 
twisted affine Grassmannians considered in \cite{Z}.

(2) In the forthcoming work, we will give an explicit description of the 
affine closures 
 $(\overline{PSO_8\times J_{G_2}/J_{PSO_8}|_{\fc_H}})$ and $\overline{(PGL_{2n+1}\times J_{SO_{2n+1}}/J_{PGL_{2n+1}}|_{\fc_{SO_{2n+1}}})}$ using the invariant theory of Vinberg pairs.

\end{remark}
\subsubsection{Subfield $F_n=\bC((t^n))$}
Let $G$ a complex reductive group and  $\check G$ the dual group. 
Let $F_n=\bC((t^n))\subset F=\bC((t))$
be the subfield
and $\calO_n=\bC[[t^n]]$ be its ring of integers.
Fix a primitive $n$-th root $\mu_n$ and let $\ct:\check G(F)\to \check G(F)$ is be order $n$ automorphism $\ct(\gamma)(t)=\gamma(\mu_nt)$.
Let $\check G(F)^{\ct}=\check G(F_n)$ and $\check G(\calO)^{\ct}=\check G(\calO_n)$
be the corresponding twisted loop and arc group and 
$\Gr^{\ct}=\check G(F_n)/\check G(\calO_n)$
the assoicated twisted affine Grassmannian.
Let $\check X_{\ct}(F)=\check G(F)/\check G(F_n)$.
Let $[n]:J_G\to J_G$ be the $n$-th
 power map $[n](g)=g^n$
 and $J_G[n]$ be the kernel of $[n]$.
 Consider the affine scheme
\[\overline{G\times J_G/J_{G}}=\on{Spec}(\bC[G\times J_G]^{J_{G}})\]
where $J_{G}$ acts on 
$G\times\fc_G$ via the natural embedding 
$J_G\to  G\times\fc_G$
and on $J_G$ via the map $[n]$.
 We have an isomorphism
\[\overline{G\times J_G/J_{G}}\cong\overline{G\times \fc_G/J_{G}[n]}=\on{Spec}(\bC[G\times\fc_G]^{J_{G}[n]})\]
where $J_{G}[n]$ acts on 
$G\times\fc_G$ via the natural embedding 
$J_{G}[n]\to J_G\to  G\times\fc_G$.

\begin{thm}
(1) There is commutative
diagram 
    \[\xymatrix{\on{Spec}(\on{H}_*^{\check G}(\check G(F)/\check G(\calO)))\ar[r]^{\rho^{\check G}}\ar[d]^\simeq&\on{Spec}(\on{H}_*^{\check G}(\check G(F_n)/\check G(\calO_n))\ar[d]^\simeq\\
J_{G}\ar[r]^{[n]}&J_{G}}\]
    where  $\rho^{\check G}_*$
    is induced by the embedding 
    $\rho:\check G(F_n)/\check G(\calO_n)\to\check G(F)/\check G(\calO)$.

(2)
There is an equivalence 
\[D^b_{\check G(\calO)}(\check G(F)/\check G(F_n))_e\cong 
\on{Perf}^{G}(\overline{G\times J_G/J_{G}})\cong\on{Perf}^{G}(\overline{G\times \fc_G/J_{G}[n]}).\]
Here $D^b_{\check G(\calO)}(\check G(F)/\check G(F_n))_e\subset D^b_{\check G(\calO)}(\check G(F)/\check G(F_n))$ is full subcategory 
generate by the dualizing sheaf $\omega_e$
of the $\check G(\calO)$-orbit through the base point $e\in \check G(F)/\check G(F_n)$
under the Hecke action.
\end{thm}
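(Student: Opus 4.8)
The plan is to specialize the machinery behind Theorem~\ref{dual construction} and Theorem~\ref{twisted DSat} to the present situation, in which the twist $\ct(\gamma)(t)=\gamma(\mu_n t)$ has trivial pinned part, so that the role played by $H=G^{\theta_0}$ in the symmetric setting is now taken by $G$ itself and the norm map is replaced by the $n$-th power map $[n]:J_G\to J_G$.

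For part~(1) I would argue as in Remark~\ref{[2]}. Under the Gram--Schmidt homeomorphisms $\Gr\cong\Omega\check G_c$ and $\Gr^{\ct}=\check G(F_n)/\check G(\calO_n)\cong\Omega\check G_c$, the inclusion $\check\rho:\Gr^{\ct}\to\Gr$ becomes the $n$-th power map $\gamma\mapsto\gamma^n$ on $\Omega\check G_c$, since the loop $\gamma(t^n)$ traverses $\gamma$ exactly $n$ times. As $[n]$ factors as the $n$-fold diagonal $\Omega\check G_c\to(\Omega\check G_c)^n$ followed by the $n$-fold multiplication, and as the coproduct and product on $\bC[J_G]\cong\on{H}_*^{\check G_c}(\Omega\check G_c)$ are by construction the push-forward along the diagonal and along the multiplication, the induced map $\rho^{\check G}_*$ on equivariant homology is the $n$-fold comultiplication followed by the $n$-fold multiplication, which is precisely the pull-back along $[n]:J_G\to J_G$. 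Hence $\rho^{\check G}=[n]$. Alternatively, the torus-localization argument of Proposition~\ref{cohomology} goes through verbatim, using that $\check\rho$ carries the $\check T$-fixed point of $\Gr^{\ct}$ indexed by $\mu$ to $t^{n\mu}$, so that on fixed points the induced map $X_\bullet(\check T)\to X_\bullet(\check T)$ is multiplication by $n$.

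For part~(2) I would form the commutative ring object $\calA_{\check G,\ct}=\underline{\on{Hom}}_{\on{Ind}(D^b_{\check G(\calO)}(\Gr))}(\omega_e)$ and, as in Lemma~\ref{ext algebra}, identify its image under derived Satake with $\Psi(\calA_{\check G,\ct})\cong\on{R\Gamma}_{\check G(\calO)^{\ct}}(\Gr^{\ct},\check\rho^!\bC[G])$. Since $\bC[G]$ is very pure, $\check\rho^!\bC[G]$ is $!$-pure, hence formal by Theorem~\ref{formality}; and since the pinned part of the twist is trivial, $\Gr^{\ct}$ is an ordinary affine Grassmannian whose $\check T$-fixed points and Iwahori orbits have the standard structure and whose image under $\check\rho$ meets exactly the Iwahori cells indexed by $nX_\bullet(\check T)$, so the analogue of Lemma~\ref{S1-S3} (with $n$ in place of $2$) holds for $\check\rho$ and the $F_n$-version of Theorem~\ref{full-faithfulness} is available. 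Taking $\calE=\omega$ and $\calF=\bC[G]$ in it, and using part~(1) to identify the twisting action, one obtains
\[
\Psi(\calA_{\check G,\ct})\cong\bigl(\on{H}^*_{\check G(\calO)^{\ct}}(\Gr^{\ct},\omega)\otimes_{\bC[\fc_G]}\on{H}^*_{\check G(\calO)^{\ct}}(\Gr,\bC[G])\bigr)^{J_G}\cong\bigl(\bC[J_G]\otimes_{\bC[\fc_G]}\bC[G\times\fc_G]\bigr)^{J_G},
\]
where $J_G$ acts on the first factor through $[n]$ followed by translation and on the second through the Kostant embedding $J_G\hookrightarrow G\times\fc_G$; this ring is $\bC[G\times J_G]^{J_G}=\bC[\overline{G\times J_G/J_G}]$. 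The identity $\bC[G\times J_G]^{J_G}\cong\bC[G\times\fc_G]^{J_G[n]}$ then follows by the generic-fibre reduction in the proof of Proposition~\ref{Wh=T^*X}: over the regular semisimple locus $J_{G,c}$ is a torus on which $[n]$ is surjective with kernel $J_{G,c}[n]$, so the two invariant rings coincide there, and Lemma~\ref{generic} propagates the identity over all of $\fc_G$. Finally, as in the proof of Theorem~\ref{twisted DSat}, the functor $\calF\mapsto\on{RHom}_{\on{Ind}(D^b_{\check G(\calO)}(\check X_{\ct}(F)))}(\omega_e,\calF\star\bC[G])$ together with the Barr--Beck--Lurie theorem identifies $\on{Ind}(D^b_{\check G(\calO)}(\check X_{\ct}(F)))_e$ with $G$-equivariant dg modules over $\Psi(\calA_{\check G,\ct})\cong\bC[\overline{G\times J_G/J_G}]$, hence with $\on{QCoh}^G(\overline{G\times J_G/J_G})$; restricting to compact objects, and using $\overline{G\times J_G/J_G}\cong\overline{G\times\fc_G/J_G[n]}$, gives the two asserted equivalences.

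The delicate point is part~(2): one must check that the purity, formality, and fully-faithfulness results of Sections~\ref{twisted Satake} and~\ref{TRS}, which were set up for order-two twists, genuinely carry over to the order-$n$ twist --- ultimately true because $\Gr^{\ct}$ becomes an ordinary affine Grassmannian after rescaling the uniformizer --- and that the lax-monoidal structure on $\check\rho^!$ (in the style of Lemma~\ref{lax monoidal}) and the matching of $J_G$-equivariant structures under derived Satake are carried correctly through the ring-object computation. Part~(1) is by comparison elementary once phrased via $\Omega\check G_c$.
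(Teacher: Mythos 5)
Your proof is correct and follows essentially the paper's own route, specializing the machinery of Theorems~\ref{dual construction} and~\ref{twisted DSat} to the case $\theta_0=\on{id}$, $H=G$. For part~(1) you lead with the $\Omega\check G_c$ picture (the $n$-th power map, generalizing Remark~\ref{[2]} from $n=2$), while the paper's stated proof cites the localization argument of Proposition~\ref{cohomology}; you offer both, and they agree, so this is a cosmetic difference. For part~(2) you reproduce the ring-object/derived Satake/Barr--Beck structure the paper invokes, and you also spell out the isomorphism $\bC[G\times J_G]^{J_G}\cong\bC[G\times\fc_G]^{J_G[n]}$, which the paper merely asserts before the theorem statement; I would only caution that your appeal to Lemma~\ref{generic} there is not quite a direct application (that lemma compares invariants under two subgroup schemes of a fixed group scheme acting on a fixed space, whereas here the total spaces $G\times J_G$ and $G\times\fc_G$ differ), so the cleaner route --- as in step~(1) of Proposition~\ref{Wh=T^*X}'s proof --- is to observe that $[n]\colon J_G\to J_G$ is surjective when $J_G$ has connected fibers (e.g.\ $G$ adjoint), giving the short exact sequence $0\to J_G[n]\to J_G\to J_G\to 0$ directly.
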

\begin{proof}
Since the
embedding 
$\check T(F_n)/\check T(\calO_n)\to \check T(F)^{}/\check T(\calO)^{}$
given by
$(t^n)^\lambda\to t^{n\lambda }\cdot\check T(\calO)$, 
the same 
proof as in Proposition \ref{cohomology},
implies part (1).

Note that Lemma \ref{S1-S3}
holds true for the embedding $\check\rho$ and 
the same argument as in the symmetric spaces setting implies (2).

\end{proof}

\begin{remark}
Let $X=G/K$ be the split symmetric spaces. 
Assume $K$ acts transitively on the set of regular 
nilpotent elements 
$\calN_{\fp^*,r}$ (e.g., in the case when $G$ is adjoint), 
then the same argument as in 
Proposition \ref{Wh=T^*X} implies 
there is a $G$-equivariant isomorphism
 $(\overline{G\times\fc_G/J_{G}[2]})\cong T^*X$ (in the proof, the adjoint assumption for $G$ is only used to insure that $\calN_{\fp^{*,r}}$ is a single orbit, see Remark \ref{single orbit}).
It will be nice to 
give a more explicit description of $(\overline{G\times\fc_G/J_{G}[n]})$
for general $n\geq 3$, see \cite[Remark 4.10]{De1} 
and \cite[Theorem 5.5.19 and Remark 5.5.20]{De2}
for interesting examples.
\end{remark}

\end{document}